\newcommand{\Ad}{\mathop{\mathrm{Ad}}}
\newcommand{\Aut}{\mathop{\mathrm{Aut}}\nolimits}
\newcommand{\CAlg}{\mathcal{CA}\mathrm{lg}}
\newcommand{\cmod}{\textrm{-}\mathrm{mod}}
\newcommand{\comod}{\textrm{-}{\rm comod}}
\newcommand{\End}{\mathop{\mathrm{End}}\nolimits}
\newcommand{\Ext}{\mathop{\mathrm{Ext}}\nolimits}
\newcommand{\Frac}{\mathop{\mathrm{Frac}}\nolimits}
\newcommand{\Grp}{\mathcal{G}\mathrm{rp}}
\newcommand{\Hom}{\mathop{\mathrm{Hom}}\nolimits}
\newcommand{\Ker}{\mathop{\mathrm{Ker}}\nolimits}
\newcommand{\ind}{\mathop{\mathrm{ind}}\nolimits}
\newcommand{\Ind}{\mathop{\mathrm{Ind}}\nolimits}
\newcommand{\pro}{\mathop{\mathrm{pro}}\nolimits}
\newcommand{\Res}{\mathop{\mathrm{Res}}\nolimits}
\newcommand{\Spec}{\mathop{\mathrm{Spec}}}
\newcommand{\Coh}{\mathop{\mathrm{Coh}}}
\newcommand{\GL}{\mathop{\mathrm{GL}}\nolimits}
\newcommand{\SL}{\mathop{\mathrm{SL}}\nolimits}
\newcommand{\SO}{\mathop{\mathrm{SO}}\nolimits}
\newcommand{\bC}{\mathbb{C}}
\newcommand{\bQ}{\mathbb{Q}}
\newcommand{\bR}{\mathbb{R}}
\newcommand{\bZ}{\mathbb{Z}}
\newcommand{\cA}{\mathcal{A}}
\newcommand{\cB}{\mathcal{B}}
\newcommand{\cC}{\mathcal{C}}
\newcommand{\cF}{\mathcal{F}}
\newcommand{\cI}{\mathcal{I}}
\newcommand{\cJ}{\mathcal{J}}
\newcommand{\cO}{\mathcal{O}}
\newcommand{\fb}{\mathfrak{b}}
\newcommand{\fg}{\mathfrak{g}}
\newcommand{\fgl}{\mathfrak{gl}}
\newcommand{\fh}{\mathfrak{h}}
\newcommand{\fk}{\mathfrak{k}}
\newcommand{\fl}{\mathfrak{l}}
\newcommand{\fq}{\mathfrak{q}}
\newcommand{\fu}{\mathfrak{u}}
\theoremstyle{plain}
\newtheorem{thm}{Theorem}[subsection]
\newtheorem{cor}[thm]{Corollary}
\newtheorem{lem}[thm]{Lemma}
\newtheorem{prop}[thm]{Proposition}
\theoremstyle{definition}
\newtheorem{cond}[thm]{Condition}
\newtheorem{cons}[thm]{Construction}
\newtheorem{conv}[thm]{Convention}
\newtheorem{defn}[thm]{Definition}
\newtheorem{ex}[thm]{Example}
\newtheorem{note}[thm]{Notation}
\newtheorem{rem}[thm]{Remark}
\newtheorem{var}[thm]{Variant}
\newtheorem{LemA}{Lemma}
\newtheorem{ThmB}{Theorem}
\newtheorem{ThmC}{Theorem}
\newtheorem{ThmD}{Theorem}
\newtheorem{ThmE}{Theorem}
\newtheorem{PropF}{Proposition}
\newtheorem{VarG}{Variant}
\newtheorem{ThmH}{Theorem}
\begin{document}
\title{Flat Base Change Formulas for $(\fg,K)$-modules over Noetherian rings}
\author{Takuma Hayashi\thanks{Graduate School of Mathematical Sciences, The University of Tokyo, 3-8-1 Komaba Meguro-ku Tokyo 153-8914, Japan, htakuma@ms.u-tokyo.ac.jp}}
\date{}
\maketitle
\begin{abstract}
We discuss the flat base change formulas of the functor $I$ and its derived functor. In particular, a flat base change theorem for $A_\fq(\lambda)$ is obtained. 
\end{abstract}
\section{Introduction}
\subsection{Background and aims}
The theory of $(\fg,K)$-modules is an algebraic approach to representation theory of real reductive Lie groups. Recently, integral and rational structures of real reductive groups and their representations have been studied by M.\ Harris, G.\ Harder, F.\ Januszewski, and the author (\cite{MR3053412}, \cite{10.1093/imrn/rny043}, \cite{1407.0574}, \cite{1604.04253}, \cite{MR3770183}, \cite{MR4007195}, and \cite{1606.04320} for example). J.\ Bernstein et al.\ also introduced contraction families as pairs over the polynomial ring $\bC\left[z\right]$ in \cite{10.1093/imrn/rny146} and \cite{10.1093/imrn/rny147}. These are all regarded as a part of the theory of $(\fg,K)$-modules over commutative rings.

The functor $I^{\fg,K}_{\fq,M}$ and its derived functor are an important construction of $(\fg,K)$-modules over the complex number field $\bC$. In particular, they include an algebraic analog of real parabolic induction, and produce the so-called $A_\fq(\lambda)$-modules which are discrete series representations of real semisimple Lie groups in special cases. If we are given a map $(\fq,M)\to(\fg,K)$ of pairs, the functor $I^{\fg,K}_{\fq,M}$ is right adjoint to the forgetful functor $\cF^{\fq,M}_{\fg,K}$ from the category of $(\fg,K)$-modules to that of $(\fq,M)$-modules. Its derived functor can be computed by the standard resolution which is obtained from the Koszul complex (see \cite{MR1330919} for details).

Januszewski constructed the functor $I^{\fg,K}_{\fq,M}$ and its derived functor in a similar way to the complex case when the base ring is a field of characteristic 0 and the groups $K,M$ are reductive (\cite{1604.04253}, \cite{MR3770183}). In a view from homological algebra, this cannot be generalized in a straightforward way when the base ring is no longer a field. For integral structures, Harder suggests to replace $\bC$ by the ring $\bZ$ of integers in the standard resolution for a definition of the $(\fg,K)$-cohomology. In \cite{MR4007195} and \cite{1606.04320}, the author constructed the functor $I^{\fg,K}_{\fq,M}$ and its derived functor over an arbitrary commutative ring. The arguments of \cite{MR4007195} heavily rely on generalities on categories, especially, closed symmetric monoidal categories. Though we know the existence of the functor, we did not understand what they actually produced.

The study of the functor $I^{\fg,K}_{\fq,M}$ consist of three steps:
\begin{enumerate}
\item[(A)] Construct $I^{\fg,K}_{\fq,M}$ and its derived functor, or prove their existence.
\item[(B)] Find pairs and suitable $(\fq,M)$-modules which are meaningful to representation theory of real reductive groups.
\item[(C)] Study the resulting $(\fg,K)$-modules from the functor $I^{\fg,K}_{\fq,M}$.
\end{enumerate}
Generalities on Part (A) were established by \cite{MR4007195} and \cite{1606.04320} as mentioned above. Part (B) is well-studied in principle when the base is $\bC$ (see \cite{MR1330919}). Usually, $(\fg,K)$ may be the Harish-Chandra pair associated to a real reductive group, and $(\fq,M)$ may be real or $\theta$-stable parabolic subpairs. However, if we work over $\bZ$, we will have many choices of $\bZ$-forms of such pairs over $\bZ$. This problem will be related to explicit descriptions in Part (C). The main purpose of this paper is to work on Part (C) in an abstract way.

\begin{note}\label{1.1.1}
For $(\fg,K)$-modules $V$ and $V'$, write $\Hom_{\fg,K}(V,V')$ for the $k$-module of $(\fg,K)$-homomorphisms from $V$ to $V'$. We will use similar notations for modules over other algebraic objects like $K$-modules.
\end{note}
\begin{conv}\label{1.1.2}
The tensor products without decoration are understood to be over the base ring $k$.
\end{conv}
In \cite{MR3770183}, Januszewski discussed the behavior of $\Ext^\bullet_{\fg,K}$ the functor $I^{\fg,K}_{\fq,M}$ along extensions $k'/k$ of fields, and proved the base change formulas
\[\Hom_{\fg,K}(X,-)\otimes k'\cong\Hom_{\fg\otimes k',K\otimes k'}(X\otimes k',-\otimes k')\]
\[\Ext^\bullet_{\fg,K}(-,-)\otimes k'\cong \Ext_{\fg\otimes k',K\otimes k'}(-\otimes k',-\otimes k')\]
\[\bR I^{\fg,K}_{\fq,M}(V)\otimes k'\simeq\bR I^{\fg\otimes k',K\otimes k'}_{\fq\otimes k',M\otimes k'}(V\otimes k')\]
\[H^\bullet(\fg,K,-)\otimes k'\cong H^\bullet(\fg\otimes k',K\otimes k',-\otimes k')\]
under suitable finiteness conditions (see \cite{MR3770183} for details). This can be regarded as Part (C). He also considered rational forms of cohomological inductions (Part (B) and Part (C), see \cite{MR3770183} 7.1).

In \cite{MR2015057}, base change formulas of representations of affine group schemes $K$ over commutative rings $k$ are discussed.
\begin{note}\label{1.1.3}
If $M\to K$ is a homomorphism between flat affine group schemes over $k$, let us denote the right adjoint functor to the forgetful functor from the category of $K$-modules to that of $M$-modules by $\Ind^K_M$.
\end{note}
Let $K$ be an affine group scheme, and $V$ be a $K$-module which is finitely generated and projective as a $K$-module. According to \cite{MR2015057} I.2.10, we have $\Hom_K(V,-)\otimes k'\cong\Hom_{K\otimes k'}(V\otimes k',-\otimes k')$. Moreover, if $k'$ is finitely generated and projective as a $k$-module then the isomorphism above holds for small colimits of such $V$. For a homomorphism $M\to K$ between flat affine group schemes over $k$, the group cohomology $H^\bullet(K,-)$ and the cohomology functor $R^n\Ind^K_M(-)$ respects flat base changes (loc.\ cit.\ Proposition I.4.13).

Our goal is to establish these isomorphisms along flat homomorphisms from Noetherian rings. Supplementarily, we also work again on Part (A) to relax the definition of pairs in \cite{MR4007195}. Fix $k$ as a commutative ground ring.
\begin{cond}\label{1.1.4}
\begin{enumerate}
\renewcommand{\labelenumi}{(\arabic{enumi})}
\item A $k$-module $V$ is said to satisfy Condition \ref{1.1.4} (1) if for any flat commutative $k$-algebra $R$, the canonical homomorphism
\[\Hom_k(V,k)\otimes R\to\Hom_k(V,R)\]
is an isomorphism.
\item A $k$-module $V$ is said to satisfy Condition \ref{1.1.4} (2) if for any $k$-module $W$ and any flat commutative $k$-algebra $R$ , the canonical homomorphism
\[\Hom_k(V,W)\otimes R\to\Hom_k(V,W\otimes R)\]
is an isomorphism.
\end{enumerate}
\end{cond}
\begin{ex}\label{1.1.5}
Finitely presented $k$-modules $V$ satisfy Condition \ref{1.1.4} (2).
\end{ex}
\begin{cond}\label{1.1.6}
Let $K$ be a flat affine group scheme over $k$. Write $I_e$ for the kernel of the counit of the coordinate ring of $K$. Then $K$ is said to satisfy Condition \ref{1.1.6} if the $k$-modules $I_e/I_e^2$ and its dual $\fk=\Hom_k(I_e/I_e^2,k)$ enjoy Condition \ref{1.1.4} (1) and (2) respectively.
\end{cond}
\begin{ex}\label{1.1.7}
If $k$ is Noetherian, and $I_e/I_e^2$ is finitely generated then $\fk$ is also finitely generated. In particular, both $I_e/I_e^2$ and $\fk$ satisfy Condition \ref{1.1.4} (2).
\end{ex}
\begin{note}\label{1.1.8}
For a flat affine group scheme satisfying Condition \ref{1.1.6}, its Lie algebra will be denoted by the corresponding small German letter.
\end{note}
A pair consists of a flat affine group scheme $K$ satisfying Condition \ref{1.1.6} and a $k$-algebra $\cA$ with a $K$-action $\phi$, equipped with a $K$-equivariant Lie algebra homomorphism $\psi:\fk\to\cA$. Moreover, a pair is demanded to satisfy the equality $d\phi(\xi)=\left[\psi(\xi),-\right]$ for any $\xi\in\fk$, where
$d\phi$ is the differential representation of $\phi$. The point of modification from \cite{MR4007195} is on the condition of $I_e/I_e^2$. In loc.\ cit., we required that $I_e/I_e^2$ is finitely generated and projective (\cite{MR4007195} Condition 2.2.2). For a pair $(\cA,K)$, an $(\cA,K)$-module is a $K$-module, equipped with a $K$-equivariant $\cA$-module structure such that the two induced actions of $\fk$ coincide (\cite{MR4007195}). We consider a version to replace algebras $\cA$ by Lie algebras $\fg$. Remark that in this paper, we do not discuss differential graded modules like loc.\ cit. Then the same arguments as \cite{MR4007195} Section 2.3 and \cite{1606.04320} Theorem 2.1.1 still work.
\begin{lem}\label{1.1.9}
Let $(\cA,K)\to(\cB,L)$ be a map of pairs in the above sense. Then we have a forgetful functor $\cF^{\cA,K}_{\cB,L}$ from the category of $(\cB,L)$-modules to that of $(\cA,K)$-modules, which admits a right adjoint functor $I^{\cB,L}_{\cA,K}$. 
\end{lem}

In \cite{1712.07336}, we focus on Part (B) and Part (C). In that paper, we consider the cases where the group $K$ is a torus. Then the theory of Hecke algebras and the Koszul resolutions work well. For instance, we study integral models of the pairs associated to the finite covering groups of $\rm{PU}(1,1)$ and their principal series representations and discrete series representations. We see that whether the induced modules from $I^{\fg,K}_{\fq,M}$ vanish or not really depends on the choice of $\bZ$-forms of pairs over $\bC$.
\subsection{Main Results}
In this paper, the pairs $(\cA,K)$ we mainly consider arise from their versions $(\fg,K)$ for Lie algebras through $\cA=U(\fg)$ the enveloping algebra of $\fg$. Therefore we write $(\fg,K)$-modules for $(U(\fg),K)$-modules.
\begin{note}\label{1.2.1}
For a pair $(\fg,K)$ over $k$, denote the category of $(\fg,K)$-modules by $(\fg,K)\cmod$.
\end{note}
We next introduce the functors of flat base changes. Let $k\to k'$ be a flat homomorphism of commutative rings, and $(\fg,K)$ be a pair over $k$.
\begin{LemA}[Proposition \ref{3.1.1}]\label{A}
\begin{enumerate}
\renewcommand{\labelenumi}{(\arabic{enumi})}
\item The Lie algebra $\fg\otimes k'$ and the affine group scheme $K\otimes k'$ over $k'$ naturally form a pair $(\fg\otimes k',K\otimes k')$ over $k'$.
\item The extension and the restriction of scalars of modules extend to an adjunction
\[-\otimes_k k':(\fg,K)\cmod\leftrightarrows(\fg\otimes k',K\otimes k')\cmod:\Res^k_{k'}.\]
\end{enumerate}
\end{LemA}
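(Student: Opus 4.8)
The plan is to construct $(\fg\otimes k',K\otimes k')$ and the two functors entirely by base change, reducing every axiom to its counterpart over $k$; the only inputs beyond the hypotheses are the flatness of $k'$ over $k$ and the fact that $-\otimes_k k'\colon k\cmod\to k'\cmod$ is a strong symmetric monoidal functor whose right adjoint is the restriction of scalars. For (1), $K\otimes k'$ is flat over $k'$, and since $k'$ is $k$-flat the augmentation ideal $I_e'$ of $\cO(K\otimes k')=\cO(K)\otimes_k k'$ is $I_e\otimes_k k'$, so $I_e'/(I_e')^2\cong(I_e/I_e^2)\otimes_k k'$. The key point I would isolate is the following: if a $k$-module $P$ satisfies Condition \ref{1.1.3} and $k'$ is $k$-flat, then $P\otimes_k k'$ satisfies Condition \ref{1.1.3} over $k'$ and $\Hom_{k'}(P\otimes_k k',k')\cong\Hom_k(P,k)\otimes_k k'$; indeed, for any $k'$-flat (hence $k$-flat) module $W'$ one computes $\Hom_{k'}(P\otimes_k k',W')\cong\Hom_k(P,W')\cong\Hom_k(P,k)\otimes_k W'\cong(\Hom_k(P,k)\otimes_k k')\otimes_{k'}W'$. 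Applying this to $P=I_e/I_e^2$ and to $P=\fk$, both of which satisfy Condition \ref{1.1.3} by Condition \ref{1.1.5} for $K$, verifies Condition \ref{1.1.5} for $K\otimes k'$ and identifies $\Lie(K\otimes k')\cong\fk\otimes_k k'$. The Lie bracket of $\fg$, the $K$-action $\phi$, and the $K$-equivariant Lie homomorphism $\psi\colon\fk\to\fg$ all base change, giving $\fg\otimes_k k'$ the structure of a $k'$-Lie algebra with a $K\otimes k'$-action and a $K\otimes k'$-equivariant $\psi'\colon\fk\otimes_k k'\to\fg\otimes_k k'$; finally, since the differential representation commutes with base change, for $\xi\in\fk$ the operator $d\phi'(\xi\otimes 1)$ is the base change of $d\phi(\xi)$, so $d\phi'(\xi\otimes 1)=[\psi'(\xi\otimes 1),-]$, and as both sides are $k'$-linear in the $\fk\otimes_k k'$-variable the full identity $d\phi'(\zeta)=[\psi'(\zeta),-]$ follows.

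For (2), on objects I would equip $V\otimes_k k'$ with the base-changed $K\otimes k'$-comodule structure and, via the canonical isomorphism $U(\fg)\otimes_k k'\cong U(\fg\otimes_k k')$, with the base-changed $\fg\otimes k'$-action; coincidence of the two induced $\fk\otimes_k k'$-actions comes from applying $-\otimes_k k'$ to the corresponding identity for $V$, using (1). In the other direction, $\Res^k_{k'}V'$ is the underlying $k$-module of $V'$ with the $\fg$-action restricted along $\fg\to\fg\otimes_k k'$ and with the $\cO(K)$-comodule structure read off from $\rho'\colon V'\to V'\otimes_{k'}\cO(K\otimes k')$ through the canonical identification $V'\otimes_{k'}(\cO(K)\otimes_k k')\cong V'\otimes_k\cO(K)$; the comodule axioms and the coincidence of the $\fk$-actions transport from the $k'$-side. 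Both assignments are visibly functorial, sending a morphism to the evident base change, respectively to its underlying $k$-linear map.

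For the adjunction I would begin from the classical base-change bijection $\Phi\colon\Hom_k(V,\Res^k_{k'}V')\xrightarrow{\ \sim\ }\Hom_{k'}(V\otimes_k k',V')$ and check that it identifies $(\fg,K)$-homomorphisms with $(\fg\otimes k',K\otimes k')$-homomorphisms. A morphism of $(\fg,K)$-modules is exactly a $k$-linear map which is simultaneously $U(\fg)$-linear and $K$-equivariant (coincidence of the $\fk$-actions is a property of objects, not of morphisms), so the two conditions can be checked separately: $U(\fg)$-linearity corresponds under $\Phi$ to $U(\fg)\otimes_k k'$-linearity by the usual base-change adjunction for modules over the $k$-algebra $U(\fg)$, and $K$-equivariance corresponds to $K\otimes k'$-equivariance because, under the identifications above, the comodule structure map of $V\otimes_k k'$ is $\rho_V\otimes_k k'$ and $\Phi$ is natural, so the equivariance square for $f$ is carried to the one for $\Phi(f)$. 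Intersecting the two sub-bijections gives the asserted isomorphism of Hom-sets, natural in both variables.

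I expect the last matching to be the only genuinely fiddly step: one must track the canonical isomorphisms $V'\otimes_{k'}(\cO(K)\otimes_k k')\cong V'\otimes_k\cO(K)$ carefully enough that comodule structures and equivariance conditions agree on the nose. Conceptually this is nothing more than the assertion that a strong monoidal functor with monoidal right adjoint transfers comonoids, their comodules, and comodule morphisms coherently, so one could alternatively deduce all of (2) from the categorical generalities of \cite{H1}. Note that flatness of $k'$ over $k$ enters only in part (1).
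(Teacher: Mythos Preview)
Your argument is correct and follows the same strategy as the paper: treat the $U(\fg)$-module and $\cO(K)$-comodule structures separately, invoke the standard base-change adjunctions for each (the paper's Proposition \ref{2.1.1} for the comodule side), and then check compatibility via the differential representation. The paper's proof is in fact terser than yours: the discussion preceding Proposition \ref{3.1.1} records exactly the two adjunctions and the compatibility of differential representations with $-\otimes k'$ and $\Res^k_{k'}$, and then simply declares the proposition a ``consequence of these functorial constructions.'' In particular, the paper does not spell out the verification of Condition \ref{1.1.5} for $K\otimes k'$; your explicit argument that Condition \ref{1.1.3} is preserved under flat base change, together with the identification $\Lie(K\otimes k')\cong\fk\otimes_k k'$, fills a gap the paper leaves to the reader. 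Your closing remark that flatness is used only in part (1) is consistent with the paper's own Remarks \ref{3.1.2} and \ref{3.1.3}.
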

Assume $k$ to be Noetherian. We compare a relation of $\Hom$ modules and flat base changes.
\begin{ThmB}[Flat base change theorem, Theorem \ref{3.1.6}]\label{B}
Suppose that $\fg$ is finitely generated as a $k$-module. Then for any finitely generated $(\fg,K)$-module $X$, we have a natural isomorphism
\[\Hom_{\fg,K}(X,-)\otimes k'\cong\Hom_{\fg\otimes k',K\otimes k'}(X\otimes k',-\otimes
 k').\]
\end{ThmB}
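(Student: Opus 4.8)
The plan is to exhibit $\Hom_{\fg,K}(X,-)$ as the kernel of a natural transformation between functors of the form $\Hom_K(W,-)$ for finitely generated $k$-modules $W$, and then to use that such functors commute with the flat base change $-\otimes_k k'$. To carry this out I first need the left adjoint $\cP$ of the forgetful functor $(\fg,K)\cmod\to K\cmod$. This exists and can be written explicitly as $\cP(W)=U(\fg)\otimes_{U(\fk)}W$ with the diagonal $K$-action, the adjunction $\Hom_{\fg,K}(\cP(W),Y)\cong\Hom_K(W,Y)$ being checked directly (a $K$-equivariant $k$-linear map into a $(\fg,K)$-module is automatically $\fk$-linear). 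Two features of $\cP$ will be used: if $W$ is finitely generated over $k$ then $\cP(W)$, being a quotient of $U(\fg)\otimes_k W$, is finitely generated over $U(\fg)$; and $\cP$ commutes with base change, $\cP(W)\otimes_k k'\cong\cP_{k'}(W\otimes_k k')$, which follows by comparing adjunctions, using Lemma~\ref{A} and its analogue for $K$-modules. I also need that $U(\fg)$ is left and right Noetherian: the associated graded ring for its standard filtration is a commutative quotient of $\mathrm{Sym}_k(\fg)$, hence a finitely generated $k$-algebra, hence Noetherian because $k$ is. This is the only place where the hypothesis that $\fg$ is finitely generated over $k$ enters.

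Next comes a finite presentation of $X$ by such objects. Since $X$ is finitely generated and, $k$ being Noetherian, every $K$-module is the union of its $K$-submodules that are finitely generated over $k$ (see \cite{Jant}), $X$ is generated over $U(\fg)$ by a finitely generated sub-$K$-module $W_0$; the inclusion $W_0\hookrightarrow X$ corresponds under the adjunction to a surjection $\cP(W_0)\twoheadrightarrow X$. Its kernel is a $U(\fg)$-submodule of $\cP(W_0)$, which is finitely generated over the Noetherian ring $U(\fg)$, hence the kernel is itself finitely generated over $U(\fg)$, hence generated over $U(\fg)$ by a finitely generated sub-$K$-module $W_1$. Altogether one gets an exact sequence $\cP(W_1)\to\cP(W_0)\to X\to 0$ of $(\fg,K)$-modules in which $W_0,W_1$ are finitely generated over $k$.

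The last ingredient is flat base change for $\Hom_K$: for $W$ finitely generated (equivalently finitely presented, as $k$ is Noetherian) over $k$, one has $\Hom_K(W,Y)\otimes_k k'\cong\Hom_{K\otimes k'}(W\otimes_k k',Y\otimes_k k')$, naturally in $W$ and $Y$. This follows by writing $\Hom_K(W,Y)$ as the kernel of the map $\Hom_k(W,Y)\to\Hom_k(W,Y\otimes_k k[K])$ measuring failure of comodule-linearity ($k[K]$ the coordinate ring of $K$), using that $-\otimes_k k'$ commutes with kernels since $k'$ is flat, that $k[K]\otimes_k k'$ is the coordinate ring of $K\otimes k'$, and that $\Hom_k(W,-)$ commutes with $-\otimes_k k'$ and with tensoring by flat modules because $W$ is finitely presented. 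Now one assembles: applying $\Hom_{\fg,K}(-,Y)$ and the $\cP$-adjunction to the presentation gives a natural left exact sequence $0\to\Hom_{\fg,K}(X,Y)\to\Hom_K(W_0,Y)\to\Hom_K(W_1,Y)$, and tensoring it with the flat ring $k'$ keeps it left exact. On the other hand $-\otimes_k k'$ is exact on $(\fg,K)$-modules (exactness is detected on underlying $k$-modules), so $\cP_{k'}(W_1\otimes_k k')\to\cP_{k'}(W_0\otimes_k k')\to X\otimes_k k'\to 0$ is a presentation of $X\otimes_k k'$ of the same form, whence a natural left exact sequence $0\to\Hom_{\fg\otimes k',K\otimes k'}(X\otimes_k k',Y\otimes_k k')\to\Hom_{K\otimes k'}(W_0\otimes_k k',Y\otimes_k k')\to\Hom_{K\otimes k'}(W_1\otimes_k k',Y\otimes_k k')$. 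The flat base change for $\Hom_K$ identifies the two rightmost terms of the first sequence with those of the second compatibly with the canonical comparison maps, and the five lemma then forces the comparison map $\Hom_{\fg,K}(X,Y)\otimes_k k'\to\Hom_{\fg\otimes k',K\otimes k'}(X\otimes_k k',Y\otimes_k k')$ to be an isomorphism.

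I expect the main obstacle to be the finite-presentation step: one must combine the Noetherianity of $U(\fg)$ with the local finiteness of $K$-modules to present $X$ by objects $\cP(W)$ with $W$ finitely generated over $k$, these being precisely the objects for which $\Hom$ out of them collapses to $\Hom_K$ and is well behaved under base change. The other ingredients — the existence and explicit form of $\cP$, the exactness of $-\otimes_k k'$, the left exactness of $\Hom$, and the finitely presented base change for $\Hom_k$ — are routine, although the naturality of all the comparison maps, needed for the final five-lemma argument, must be tracked carefully.
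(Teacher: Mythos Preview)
Your proof is correct and shares the paper's essential ingredients: the left adjoint $\cP=\ind^\fg_\fk$ to the forgetful functor, the reduction via this adjunction to flat base change for $\Hom_K(W,-)$ with $W$ finitely generated over $k$ (the paper's Corollary~\ref{3.2.10}), Noetherianity of $U(\fg)$ (Lemma~\ref{3.2.3}), and local finiteness of $K$-modules over a Noetherian base. The difference lies only in how the reduction to objects $\cP(W)$ is organized. You build an explicit two-term presentation $\cP(W_1)\to\cP(W_0)\to X\to 0$ and compare the resulting left-exact $\Hom$-sequences via the five lemma. The paper instead argues categorically: the class $S$ of $X$ for which base change holds is closed under finite colimits, the objects $\ind^\fg_\fk Q$ with $Q$ finitely generated over $k$ form a set of compact generators of $(\fg,K)\cmod$ (Proposition~\ref{3.2.4}, Corollary~\ref{2.1.14}, Lemma~\ref{2.1.10}), and a general result on compactly generated categories (Lemma~\ref{2.1.13}, quoted from \cite{AR}) asserts that every compact object lies in the finite-colimit closure of the generators. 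Your route is more elementary and self-contained, since it avoids the external categorical input of Lemma~\ref{2.1.13} by constructing the needed finite colimit by hand; the paper's route is terser and packages the argument in a form that transfers immediately to other compactly generated settings.
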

\begin{ThmC}[Theorem \ref{3.1.7}]\label{C}
Let $k\to k'$ be a flat ring homomorphism, and $(\fq,M)\to(\fg,K)$ be a map of pairs. Suppose that the following conditions are satisfied:
\begin{enumerate}
\renewcommand{\labelenumi}{(\roman{enumi})}
\item $\fk\oplus\fq\to\fg$ is surjective.
\item $\fq$ and $\fg$ are finitely generated as $k$-modules.
\end{enumerate}
Then we have an isomorphism
\[(I^{\fg,K}_{\fq,M}V)\otimes_k k'\cong I^{\fg\otimes_k k',K\otimes_k k'}_{\fq\otimes_k k',M\otimes_k k'}(V\otimes_k k').\]
\end{ThmC}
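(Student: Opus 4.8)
The plan is to exhibit the comparison morphism explicitly and then prove it is an isomorphism by decomposing $I^{\fg,K}_{\fq,M}$ into a composite of right adjoints, each of which I show respects flat base change; since $k\to k'$ is flat, $-\otimes_k k'$ is exact and preserves finite limits, and this is what will glue the pieces together. The morphism itself: the forgetful functors commute with extension of scalars (immediate from the constructions and Lemma~\ref{A}), so applying $-\otimes_k k'$ to the counit $\cF^{\fq,M}_{\fg,K}(I^{\fg,K}_{\fq,M}V)\to V$ of the adjunction of Lemma~\ref{1.1.8} produces a map of $(\fq\otimes_k k',M\otimes_k k')$-modules $\cF^{\fq\otimes_k k',M\otimes_k k'}_{\fg\otimes_k k',K\otimes_k k'}((I^{\fg,K}_{\fq,M}V)\otimes_k k')\to V\otimes_k k'$, whose adjunct over $k'$ is the arrow $\theta_V$ of the statement. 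Next I use the factorisation of the forgetful functor through $(\fg,M)\cmod$, namely $\cF^{\fq,M}_{\fg,K}=\cF^{\fq,M}_{\fg,M}\circ\cF^{\fg,M}_{\fg,K}$, giving $I^{\fg,K}_{\fq,M}=I^{\fg,K}_{\fg,M}\circ I^{\fg,M}_{\fq,M}$: the outer factor $I^{\fg,K}_{\fg,M}$ changes only the group (the algebraic incarnation of Zuckerman's functor), the inner factor $I^{\fg,M}_{\fq,M}$ changes only the Lie algebra. It then suffices to prove the base-change statement for each factor and to combine the resulting isomorphisms by naturality, identifying the composite with $\theta_V$ via the counit description above.

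For the group factor $I^{\fg,K}_{\fg,M}$, its underlying group module is obtained from $\Ind^K_M$ applied to the underlying $M$-module by a finite-limit construction imposing that the differential of the $K$-action agree with the $\fk$-part of the $\fg$-action. Here $\Ind^K_M$ commutes with flat base change: one has $\cO(K\otimes_k k')=\cO(K)\otimes_k k'$, the group schemes are flat, and taking $M$-invariants is itself a finite limit, so it commutes with the exact functor $-\otimes_k k'$ (compare \cite{Jant} I.3–4). The finite limit imposing $\fk$-compatibility is built from $\Hom_k(\fk,-)$, which commutes with $-\otimes_k k'$ because $\fk$ and $I_e/I_e^2$ satisfy Condition~\ref{1.1.3} — this is precisely the content of Condition~\ref{1.1.5} — so that $\Hom_k(\fk,-)\otimes_k k'\cong\Hom_{k'}(\fk\otimes_k k',-\otimes_k k')$. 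Hence this factor respects flat base change.

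For the Lie-algebra factor $I^{\fg,M}_{\fq,M}$ one has $U(\fg\otimes_k k')=U(\fg)\otimes_k k'$ and likewise for $\fq$, and filtering $U(\fg)$ by Poincaré--Birkhoff--Witt degree gives steps that are finitely generated $k$-modules by hypothesis~(ii), hence finitely presented since $k$ is Noetherian, so Condition~\ref{1.1.3} (Example~\ref{1.1.4}) handles the $\Hom_k$-base-change needed for the underlying module of the coinduction $\Hom_{U(\fq)}(U(\fg),-)$ degreewise. The main obstacle — and where all three hypotheses are genuinely used — is that $I^{\fg,M}_{\fq,M}(V)$ is not this coinduction itself but its maximal locally $M$-finite $(\fg,M)$-submodule, and passing to that submodule trades the finite limits above for an inverse limit over the PBW degree, which does not commute with the merely flat functor $-\otimes_k k'$. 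Resolving this is where hypothesis~(i) enters: surjectivity of $\fk\oplus\fq\to\fg$ makes $\fg/\mathrm{im}(\fq)$ a quotient of $\fk$, so once the group part is reinstated — that is, once one argues with $I^{\fg,K}_{\fq,M}$ globally, using the locally finite $K$-action rather than treating the two factors in isolation — the $K$-finiteness pins down exactly the PBW directions transverse to $\fq$, exhibiting the relevant object as a filtered colimit of finitely generated $(\fq,M)$-submodules; on these Theorem~\ref{B} for the pair $(\fq,M)$ and the group-scheme base change of the previous paragraph apply, and filtered colimits commute with $-\otimes_k k'$. Verifying rigorously that these finitely generated pieces and the transition maps between them descend compatibly along $k\to k'$ — equivalently, that the maximal-locally-finite-submodule construction commutes with flat base change under (i)–(ii) over a Noetherian base — is the technical heart of the proof; the same analysis, run with the standard (Koszul) resolution, is what one would use to deduce the corresponding statement for $\bR I^{\fg,K}_{\fq,M}$.
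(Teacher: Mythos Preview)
Your approach has a genuine gap. The factorisation $I^{\fg,K}_{\fq,M}=I^{\fg,K}_{\fg,M}\circ I^{\fg,M}_{\fq,M}$ together with the plan to verify base change for each factor separately breaks down at the inner factor: hypothesis~(i) gives $\fk+\fq\to\fg$ surjective, but for the map $(\fq,M)\to(\fg,M)$ the analogous condition would be $\fm+\fq\to\fg$ surjective, and this is \emph{not} assumed. There is thus no reason for $I^{\fg,M}_{\fq,M}$ alone to respect flat base change, and the obstruction you yourself identify --- that passing to the maximal locally $M$-finite submodule involves an inverse limit --- is real at this stage. Your proposed remedy, ``reinstating the group part'' and invoking $K$-finiteness, abandons the factorisation altogether: the object $I^{\fg,M}_{\fq,M}(V)$ lives in $(\fg,M)\cmod$ and carries no $K$-action, so $K$-finiteness is simply unavailable there. (Separately, the description of the Zuckerman factor as $\Ind^K_M$ followed by a finite-limit compatibility condition is not justified in the present generality, where $I$ is constructed only by the abstract adjoint-functor argument of Lemma~\ref{1.1.8}.) Your closing sentence concedes that the required verification ``is the technical heart of the proof'' without carrying it out.

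The paper's argument avoids any explicit description of $I^{\fg,K}_{\fq,M}$. One uses that objects of the form $X\otimes k'$ with $X$ a finitely generated $(\fg,K)$-module generate $(\fg\otimes k',K\otimes k')\cmod$ (Lemma~\ref{2.1.10}, Proposition~\ref{3.2.4}); by the criterion of Definition~\ref{2.1.9}(d) it then suffices to show that $\Hom_{\fg\otimes k',K\otimes k'}(X\otimes k',\iota)$ is bijective for each such $X$. Condition~(i) enters precisely here: it guarantees that $\cF^{\fq,M}_{\fg,K}X$ is again finitely generated over $U(\fq)$, so Theorem~\ref{B} applies to both pairs $(\fg,K)$ and $(\fq,M)$ and yields a chain of bijections from $\Hom_{\fg,K}(X,I^{\fg,K}_{\fq,M}W)\otimes k'$ to $\Hom_{\fg\otimes k',K\otimes k'}(X\otimes k',I^{\fg\otimes k',K\otimes k'}_{\fq\otimes k',M\otimes k'}(W\otimes k'))$. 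A short $k'$-linearity check shows this composite agrees with the map induced by~$\iota$. No factorisation, no PBW computation, and no analysis of the locally-finite-submodule construction are required.
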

\begin{ex}\label{1.2.2}
Let $k$ be the ring $\bZ$ of integers, and $k'$ be the field $\bQ$ of rational numbers. Then Theorem \ref{C} asserts that $I^{\fg,K}_{\fq,M}(V)$ is a $\bZ$-form (with torsions) of $I^{\fg\otimes \bQ,K\otimes \bQ}_{\fq\otimes \bQ,M\otimes \bQ}(V\otimes \bQ)$.
\end{ex}
The condition (i) of Theorem \ref{C} is satisfied in the following cases:
\begin{ex}[The Zuckerman functor]\label{1.2.3}
The Lie algebra $\fq$ is equal to $\fg$, and the map $\fq\to\fg$ is the identity. In this case, $\Gamma=I^{\fg,K}_{\fg,M}$ is called the Zuckerman functor.
\end{ex}
\begin{ex}\label{1.2.4}
The pair $(\fg,K)$ is trivial. In other words, $\fg$ is the zero Lie algebra $0$, and $K$ is the trivial group scheme $\Spec k$. In this case, the functor $I^{0,\Spec k}_{\fq,M}$ will be denoted by $H^0(\fq,M,-)$.
\end{ex}
\begin{ex}[The algebraic Borel-Weil induction]\label{1.2.5}
Let $G$ be a split reductive group over $\bZ$. Fix a maximal split torus $T$ of $G$, and a positive root system of the Lie algebra $\fg$ of $G$. Write $\bar{\fb}$ for the Lie subalgebra of $\fg$ corresponding to the negative roots. Then we have a map $(\bar{\fb},T)\to(\fg,G)$ of pairs. The corresponding functor $I^{\fg,G}_{\bar{\fb},T}$ is called the Borel-Weil induction. Its derived functor is called the Borel-Weil-Bott induction.
\end{ex}
We also have its derived version:
\begin{note}\label{1.2.6}
Let $(\fg,K)$ be a pair. Then denote the unbounded derived category of $(\fg,K)$-modules and its full subcategory spanned by complexes cohomologically bounded below by $D(\fg,K)$ and $D^+(\fg,K)$ respectively.
\end{note}
\begin{ThmD}[Theorem \ref{3.1.10}]\label{D}
Let $k\to k'$ be a flat ring homomorphism, and $(\fq,M)\to(\fg,K)$ be a map of pairs. Suppose that the following conditions are satisfied:
\begin{enumerate}
\renewcommand{\labelenumi}{(\roman{enumi})}
\item $\fk\oplus\fq\to\fg$ is surjective.
\item $\fq$ and $\fg$ are finitely generated as $k$-modules.
\end{enumerate}
Then we have a natural isomorphism
\[(\bR I^{\fg,K}_{\fq,M}-)\otimes_k k'\simeq \bR I^{\fg\otimes k',K\otimes k'}_{\fq\otimes k',M\otimes k'}(-\otimes_k k')\]
on $D^+(\fq,M)$.
\end{ThmD}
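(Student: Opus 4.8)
The plan is to deduce the derived statement from its underived counterpart, Theorem \ref{C}, together with the exactness of the flat base change functor $-\otimes_k k'$, by computing both derived functors through one and the same functorial resolution by $I$-acyclic modules.

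\textbf{Setup via the standard resolution.} First I would recall from \cite{H2} that $\bR I^{\fg,K}_{\fq,M}$ is computed, on $D^+(\fq,M)$, by the standard resolution: there is a cochain complex $\mathrm{S}^\bullet(V)$, functorial in the $(\fq,M)$-module $V$ and concentrated in non-negative degrees, together with a functorial quasi-isomorphism $V\to\mathrm{S}^\bullet(V)$, whose terms $\mathrm{S}^n(V)$ are $I^{\fg,K}_{\fq,M}$-acyclic; concretely it is built from the Koszul resolution attached to $\fg/\fq$ and the production and Zuckerman functors along the factorization $(\fq,M)\to(\fg,M)\to(\fg,K)$ of the structure map, so that its terms are obtained by iterating $-\otimes-$, $\Hom$ against finitely generated modules, exterior powers, and (co)induction along affine group schemes. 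For a complex $V^\bullet\in D^+(\fq,M)$ one applies $\mathrm{S}^\bullet$ degreewise and totalizes; the bounded-below hypothesis guarantees that the total complex is a bounded-below complex of $I$-acyclics quasi-isomorphic to $V^\bullet$, so that $I^{\fg,K}_{\fq,M}$ applied to it represents $\bR I^{\fg,K}_{\fq,M}(V^\bullet)$. I would then note that hypotheses (i) and (ii) are inherited by the base-changed data: surjectivity of $\fk\oplus\fq\to\fg$ survives the right-exact functor $-\otimes_k k'$, and finite generation of $\fq,\fg$ over $k$ passes to $\fq\otimes_k k',\fg\otimes_k k'$ over $k'$; hence, writing $\mathrm{S}'^\bullet$ for the standard resolution of the base-changed pair, the same description of $\bR I^{\fg\otimes k',K\otimes k'}_{\fq\otimes k',M\otimes k'}$ is available over $k'$.

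\textbf{Compatibility of the resolution with base change.} The crux is a natural isomorphism of complexes $\mathrm{S}^\bullet(V)\otimes_k k'\cong\mathrm{S}'^\bullet(V\otimes_k k')$, compatible with the differentials. This I would establish by inspecting the construction termwise: exterior powers and cokernels commute with $-\otimes_k k'$, so the Koszul part $\wedge^\bullet(\fg/\fq)$ base-changes; the $K$-module structures and the production and (co)induction functors commute with $-\otimes_k k'$ by Lemma \ref{A} and Theorem \ref{C}, together with the affine group scheme base change isomorphisms recalled from \cite{Jant}; and, since $k'$ is flat over $k$, $-\otimes_k k'$ is exact, so the base-changed complex is again a resolution of $V\otimes_k k'$ whose terms are $I'$-acyclic — i.e.\ it is canonically the standard resolution over $k'$. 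I expect this step to be the main obstacle: although conceptually routine, it requires matching the two resolutions term by term and differential by differential and checking that $I$-acyclicity is genuinely inherited (the latter being automatic once the identification with $\mathrm{S}'^\bullet$ is in place, via the $k'$-version of the acyclicity assertion of \cite{H2}).

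\textbf{Conclusion.} Finally I would assemble the equivalence. Applying $I^{\fg,K}_{\fq,M}$ degreewise to $\mathrm{S}^\bullet(V)$ and then $-\otimes_k k'$, Theorem \ref{C} — which applies to each $\mathrm{S}^n(V)$, since its hypotheses are intrinsic to the pairs and impose no condition on the module — gives a degreewise isomorphism onto $I^{\fg\otimes k',K\otimes k'}_{\fq\otimes k',M\otimes k'}$ applied to $\mathrm{S}^\bullet(V)\otimes_k k'\cong\mathrm{S}'^\bullet(V\otimes_k k')$; these isomorphisms commute with the differentials because the comparison map of Theorem \ref{C} is natural. Passing to the total complex of the double complex associated to a $V^\bullet\in D^+(\fq,M)$ and using that $-\otimes_k k'$, being exact, commutes with totalization and with the passage to cohomology, one obtains the desired natural equivalence
\[(\bR I^{\fg,K}_{\fq,M}-)\otimes_k k'\simeq\bR I^{\fg\otimes k',K\otimes k'}_{\fq\otimes k',M\otimes k'}(-\otimes_k k')\]
on $D^+(\fq,M)$; it is the base change transformation whose degree-zero cohomology recovers the map of Theorem \ref{C}, and its naturality in the argument follows from that of the standard resolution and of the base change maps.
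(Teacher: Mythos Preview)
Your argument hinges on a ``standard resolution'' $\mathrm{S}^\bullet(V)$ over a general Noetherian base, and this is precisely what is \emph{not} available here. The paper states this explicitly in the introduction: ``For the proof of Theorem~\ref{D}, we do not have a standard resolution.'' The Koszul/production/Zuckerman package you invoke from \cite{H2} and \cite{J1} is only known to yield an $I$-acyclic resolution when $k$ is a field of characteristic~$0$ and the groups are reductive (cf.\ the proof of Proposition~\ref{3.3.5}); over a Noetherian ring one has neither the exactness of $\pro^\fg_\fq$ (Section~4 is devoted to salvaging special cases of this), nor a guarantee that $\fg/\fq$ is projective so that the Koszul complex resolves, nor the acyclicity of the resulting terms. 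So the ``Setup'' and ``Compatibility'' steps rest on an object that has not been constructed in the required generality, and the proof does not go through.

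The paper's route avoids this entirely. It takes a bounded-below complex $I^\bullet$ of \emph{injective} $(\fq,M)$-modules and shows that each $I^n\otimes k'$ is $I^{\fg\otimes k',K\otimes k'}_{\fq\otimes k',M\otimes k'}$-acyclic. The key input is Lemma~\ref{3.2.14}: if $I$ is injective then $\Res^k_{k'}(I\otimes k')$ is again injective, which is an immediate consequence of the flat base change for $\Hom$ (Theorem~\ref{B}). Combined with Corollary~\ref{3.1.4} (commutation of $I$ with $\Res^k_{k'}$) and the exactness and conservativity of $\Res^k_{k'}$, this forces the higher $R^p I'$ to vanish on $I^n\otimes k'$. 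One then applies Theorem~\ref{C} termwise exactly as you do in your final step. In short, your concluding mechanism is correct; what needs replacing is the source of acyclic resolutions---injectives plus Lemma~\ref{3.2.14}, rather than a standard resolution that does not exist here.
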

In view of Theorem \ref{D}, the cohomology modules of $I^{\fg,K}_{\fq,M}$ over $\bZ$ are $\bZ$-forms of those over $\bQ$ via the base change $\otimes\bQ$ under the suitable conditions. As mentioned in the introduction of \cite{MR4007195}, it is an expected new phenomenon that the cohomology involve torsions. We give an example in \cite{1712.07336}.

It will be convenient to consider an unbounded analog of Theorem \ref{D}. In fact, then we can use infinite homotopy colimits. They are needed when we consider the homotopy descent for instance (\cite{1001.1556}). The idea of descent and its applications to number theory have already appeared in \cite{MR3770183}. For the proof of Theorem \ref{D}, we do not have a standard resolution. Instead we prove that $\otimes k'$ sends injective objects to acyclic objects with respect to $I^{\fg\otimes k',K\otimes k'}_{\fq\otimes k',M\otimes k'}$. Then we see the base change formula of complexes degreewise. Therefore the argument does not extend literally to the unbounded case. To establish an unbounded analog, we replace the unbounded derived categories. For a pair $(\fg,K)$ over a Noetherian ring $k$, write the stable derived category of $(\fg,K)$-modules by $\Ind\Coh(\fg,K)$ in the sense of \cite{MR2157133}. In terms of higher categories, this can be thought of as the ind-completion (see \cite{MR2522659}) of the $\infty$-category $\Coh(\fg,K)$ of cohomologically bounded complexes whose cohomologies are finitely generated as $(\fg,K)$-modules.

Let $k\to k'$ be a flat homomorphism of Noetherian rings, and $(\fq,M)\to (\fg,K)$ be a map of pairs over $k$. Then we can define the ind-analogs of the functors above:
\[-\otimes k':\Ind\Coh(\fg,K)\to \Ind\Coh(\fg\otimes k',K\otimes k')\]
\[-\otimes k':\Ind\Coh(\fq,M)\to \Ind\Coh(\fq\otimes k',M\otimes k')\]
\[I_{\fq,M}^{\fg,K,\ind}:\Ind\Coh(\fq,M)\to \Ind\Coh(\fg,K)\]
\[I_{\fq\otimes k',M\otimes k'}^{\fg\otimes k',K\otimes k',\ind}:\Ind\Coh(\fq\otimes k',M\otimes k')\to \Ind\Coh(\fg\otimes k',K\otimes k').\]
\begin{ThmE}[Theorem \ref{3.3.4}]\label{E}
There is a canonical isomorphism
\[I^{\fg,K,\ind}_{\fq,M}(-)\otimes k'\to I^{\fg\otimes k',K\otimes k',\ind}_{\fq\otimes k',M\otimes k'}(-\otimes k').\]
Moreover, it restricts to the natural isomorphism $\bR I^{\fg,K}_{\fq,M}(-)\otimes k'\simeq \bR I^{\fg\otimes k',K\otimes k'}_{\fq\otimes k',M\otimes k'}(-\otimes k')$ of Theorem \ref{D} under the identifications
\[\Ind\Coh(\fq,M)^+\simeq D(\fq,M)^+\]
\[\Ind\Coh(\fg\otimes k',K\otimes k')^+\simeq D(\fg\otimes k',K\otimes k')^+\]
\end{ThmE}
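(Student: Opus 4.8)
The plan is to reduce the assertion to the bounded statement already proved in Theorem~\ref{D} and then to pass to ind-categories by a formal argument. Recall that $\Ind\Coh(-)$ is the ind-completion of $\Coh(-)$, so that a filtered-colimit-preserving functor out of it is determined by its restriction to $\Coh(-)$; by construction the ind-analogs $I^{\fg,K,\ind}_{\fq,M}$, $I^{\fg\otimes k',K\otimes k',\ind}_{\fq\otimes k',M\otimes k'}$ and the base-change functors $-\otimes k'$ on the stable derived categories are filtered-colimit-preserving and restrict on compact objects to $\bR I^{\fg,K}_{\fq,M}$, $\bR I^{\fg\otimes k',K\otimes k'}_{\fq\otimes k',M\otimes k'}$ and to the ordinary derived base change respectively. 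Hence the two composites in the asserted equivalence preserve filtered colimits, and since every object of $\Ind\Coh(\fq,M)$ is a filtered colimit of objects of $\Coh(\fq,M)$, it is enough to produce a natural equivalence of the restrictions of these composites to $\Coh(\fq,M)$ and then extend it ind-continuously; an ind-continuous natural transformation between filtered-colimit-preserving functors that is invertible on $\Coh(\fq,M)$ is automatically invertible.

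On $\Coh(\fq,M)$ the two composites are exactly the ones appearing in Theorem~\ref{D}. Indeed, under the finiteness hypotheses (i)--(ii) the functor $\bR I^{\fg,K}_{\fq,M}$ carries $\Coh(\fq,M)$ into $\Coh(\fg,K)$---this is what makes $I^{\fg,K,\ind}_{\fq,M}$ a well-defined functor of stable derived categories---and, as $k\to k'$ is a flat homomorphism of Noetherian rings, $-\otimes k'$ carries $\Coh$ into the $\Coh$ of the base-changed pair, since $k'$ is flat (so that tensoring commutes with cohomology) and finite generation is preserved. As $\Coh(\fq,M)\subseteq D^+(\fq,M)$, the natural equivalence of Theorem~\ref{D} restricts to a natural equivalence of the two composites on $\Coh(\fq,M)$; ind-extending it, and using that the ind-extension of a composite of functors $\Coh\to\Coh$ is the composite of the ind-extensions, yields the canonical equivalence asserted in the statement.

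Finally, for the compatibility with Theorem~\ref{D} I would unwind the identifications $\Ind\Coh(\fq,M)^+\simeq D^+(\fq,M)$ and $\Ind\Coh(\fg\otimes k',K\otimes k')^+\simeq D^+(\fg\otimes k',K\otimes k')$ of \cite{Kr}: the claim is that these equivalences intertwine the restrictions of $I^{\fg,K,\ind}_{\fq,M}$ and of $-\otimes k'$ to the bounded-below parts with $\bR I^{\fg,K}_{\fq,M}$ and $-\otimes k'$ on $D^+$. Granting this, the canonical equivalence constructed above---being the ind-extension of the restriction to $\Coh(\fq,M)$ of the natural transformation of Theorem~\ref{D}---must restrict on the bounded-below part to that very natural transformation, because $\Coh(-)$ sits compatibly inside both $\Ind\Coh(-)^+$ and $D^+(-)$. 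The \emph{main obstacle} is exactly this intertwining: one must verify that $\bR I^{\fg,K}_{\fq,M}$ on $D^+$, which a priori need not commute with filtered colimits, agrees under the above identification with the ind-continuous extension of its restriction to $\Coh$. This should follow from the fact that conditions (i)--(ii) force $\bR I^{\fg,K}_{\fq,M}$ to be computed by a fixed finite standard complex, hence to have uniformly bounded cohomological amplitude, hence to commute with filtered colimits of uniformly bounded-below objects; once this is in place, the rest of the argument is purely formal.
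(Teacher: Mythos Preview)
Your overall strategy---reduce to $\Coh(\fq,M)$ by continuity and then invoke Theorem~\ref{D}---is exactly what the paper does. But two of your supporting claims are wrong, and they matter.

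First, $I^{\fg,K,\ind}_{\fq,M}$ is \emph{not} defined as the ind-extension of $\bR I^{\fg,K}_{\fq,M}|_{\Coh}$. It is defined as the right adjoint to the ind-extension of $\cF^{\fq,M}_{\fg,K}$ (see the paragraph after Lemma~\ref{3.3.1}). Your assertion that ``$\bR I^{\fg,K}_{\fq,M}$ carries $\Coh(\fq,M)$ into $\Coh(\fg,K)$---this is what makes $I^{\fg,K,\ind}_{\fq,M}$ a well-defined functor'' is unjustified and generally false: over a Noetherian base there is no reason for the derived Zuckerman-type functor to have bounded amplitude or finitely generated cohomology. The continuity of $I^{\fg,K,\ind}_{\fq,M}$ is not ``by construction'' either; it follows because its left adjoint $\cF^{\fq,M}_{\fg,K}$ preserves compact objects (Lemma~\ref{3.3.1}, using hypothesis~(i)). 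The agreement of $I^{\fg,K,\ind}_{\fq,M}$ with $\bR I^{\fg,K}_{\fq,M}$ on $\Coh(\fq,M)$ is deduced in the paper by a purely formal adjunction argument: the left adjoints $\cF$ and $-\otimes k'$ are $t$-exact (Lemma~\ref{3.3.3}), hence restrict to the eventually coconnective parts, and under $\Ind\Coh(-)^+\simeq D(-)^+$ they coincide with the usual functors; passing to right adjoints gives the identification.

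Second, your proposed mechanism for the ``main obstacle''---that conditions (i)--(ii) force $\bR I^{\fg,K}_{\fq,M}$ to be computed by a fixed finite standard complex---is exactly what is \emph{not} available here. The paper says this explicitly in the introduction (``For the proof of Theorem~\ref{D}, we do not have a standard resolution''). So you cannot argue finite amplitude that way. The paper sidesteps the obstacle entirely: once the right adjoints on $\Ind\Coh^+$ are identified with those on $D^+$ via the formal adjunction argument above, the compatibility of the comparison map $\iota$ in the two settings follows from the compatibility of the \emph{left} adjoints (Lemma~\ref{3.3.1} and the discussion after Lemma~\ref{3.3.3}), since $\iota$ is built from units and counits. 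No amplitude bound is needed.
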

This reduces to a base change formula for $D(\fg,K)$ in special cases by the following assertion:
\begin{PropF}[Proposition \ref{3.3.5}]\label{F}
Suppose that $k$ is a field of characteristic 0, $(\fg,K)$ be a pair with $K$ reductive and $\dim \fg<+\infty$. Then the embedding $\Coh(\fg,K)\to D(\fg,K)$ induces an equivalence $\Ind\Coh(\fg,K)\simeq D(\fg,K)$.
\end{PropF}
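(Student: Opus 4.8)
The plan is to show that the unbounded derived $\infty$-category $D(\fg,K)$ is compactly generated and that its full subcategory of compact objects is exactly $\Coh(\fg,K)$. Granting this, the equivalence $\Ind\Coh(\fg,K)\simeq D(\fg,K)$ is automatic: a compactly generated presentable stable $\infty$-category is canonically the ind-completion of its full subcategory of compact objects (\cite{L1}), and by the universal property of the ind-completion this identification is the unique colimit-preserving extension of the inclusion $\Coh(\fg,K)\hookrightarrow D(\fg,K)$.

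First I would produce a convenient family of compact projective generators. Since $k$ is a field of characteristic $0$ and $K$ is reductive, the category of $K$-modules is semisimple; moreover $(\fk,K)\cmod$, with $\fk=\Lie K$, is canonically equivalent to the category of $K$-modules, because on a $K$-module the only $U(\fk)$-action making it a $(\fk,K)$-module is the differential of the $K$-action (this holds even for disconnected $K$, that differential being the differential of the action of the identity component). In particular $(\fk,K)\cmod$ is semisimple, so all of its objects are projective. The forgetful functor $(\fg,K)\cmod\to(\fk,K)\cmod$ attached to the map of pairs $(\fk,K)\to(\fg,K)$ preserves all limits and colimits, so by the adjoint functor theorem it admits a left adjoint $\ind^{\fg,K}_{\fk,K}$ (its right adjoint being the functor $I^{\fg,K}_{\fk,K}$ of Lemma \ref{1.1.8}); composing with the equivalence $(\fk,K)\cmod\simeq K\cmod$ gives a left adjoint $\cP$ to the forgetful functor $(\fg,K)\cmod\to K\cmod$. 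Since that forgetful functor is exact, $\cP(V)$ is a projective object of $(\fg,K)\cmod$ for every $K$-module $V$, and for $V$ finite dimensional $\Hom_{\fg,K}(\cP(V),-)\cong\Hom_K(V,-)$ commutes with arbitrary direct sums. Using the elementary identity $\Hom_{D(\fg,K)}(P,C[n])\cong\Hom_{\fg,K}(P,H^n(C))$, valid for a projective object $P$ and any complex $C$, one deduces that each $\cP(V)$ with $\dim_kV<\infty$ is compact in $D(\fg,K)$ and that these objects jointly detect acyclicity; hence $D(\fg,K)$ is compactly generated by the set $\{\cP(V):\dim_kV<\infty\}$, and by Neeman's theorem $D(\fg,K)^c$ is the thick subcategory generated by this set.

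Next I would identify that thick subcategory with $\Coh(\fg,K)$. Because $\dim\fg<\infty$, the enveloping algebra $U(\fg)$ is left Noetherian, and since a $(\fg,K)$-submodule is in particular a $U(\fg)$-submodule, every finitely generated $(\fg,K)$-module is a Noetherian object; consequently $\Coh(\fg,K)$ coincides with the thick subcategory of $D(\fg,K)$ generated by the finitely generated $(\fg,K)$-modules. Each $\cP(V)$ with $V$ finite dimensional is finitely generated, so $D(\fg,K)^c\subseteq\Coh(\fg,K)$. For the converse it suffices to check that every finitely generated $(\fg,K)$-module $M$ lies in $D(\fg,K)^c$. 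Now $M$ is a quotient of $\cP(V_0)$ for a finite dimensional $K$-submodule $V_0\subseteq M$ generating it, and iterating this (the $\cP(V_i)$ being Noetherian objects) produces a resolution of $M$ by finitely generated projectives of the form $\cP(\text{finite dimensional})$. On the other hand the Koszul (standard) resolution of \cite{KV} taken relative to $\fk$, whose $j$-th term is $\ind^{\fg,K}_{\fk,K}(\Lambda^j(\fg/\fk)\otimes M)$, has length $\dim(\fg/\fk)$ and has projective terms, since $\Lambda^j(\fg/\fk)\otimes M$ is projective in $(\fk,K)\cmod$ by semisimplicity and $\ind^{\fg,K}_{\fk,K}$ sends projectives to projectives; therefore $(\fg,K)\cmod$ has global dimension at most $\dim(\fg/\fk)$. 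Truncating the finitely generated projective resolution of $M$ at length $\dim(\fg/\fk)$, its top syzygy is finitely generated and, by the bound just obtained, projective; so $M$ has a bounded resolution by finitely generated projectives, each of which is a direct summand of some $\cP(V)$ with $V$ finite dimensional. Hence $M$ lies in the thick subcategory generated by $\{\cP(V):\dim_kV<\infty\}$, which is $D(\fg,K)^c$. This gives $\Coh(\fg,K)\subseteq D(\fg,K)^c$ and completes the proof.

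I expect the main difficulty to be the homological input of the last paragraph: carrying the Knapp--Vogan relative standard resolution over to the schematic $(\fg,K)$-module framework of \cite{H1} and \cite{H2} with the correct projective terms, and checking that $(\fk,K)\cmod$ is semisimple for a possibly disconnected reductive $K$. Once finite global dimension and local Noetherianity of $(\fg,K)\cmod$ are in hand, the remainder is formal manipulation with compactly generated stable $\infty$-categories.
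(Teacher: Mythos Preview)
Your proof is correct and follows essentially the same route as the paper's: both arguments show that finitely generated $(\fg,K)$-modules admit bounded resolutions by projectives of the form $\ind^\fg_\fk V$ with $V$ finite dimensional, using semisimplicity of $K\cmod$ together with the standard (Koszul) resolution to bound the global dimension, and then conclude that coherent complexes are compact and generate $D(\fg,K)$. Your write-up is somewhat more thorough in that you also verify the reverse inclusion $D(\fg,K)^c\subseteq\Coh(\fg,K)$, which the paper does not state explicitly; but this extra step is not needed for the equivalence $\Ind\Coh(\fg,K)\simeq D(\fg,K)$, since it already follows once $\Coh(\fg,K)$ is known to consist of compact objects and to generate.
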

We also show a finite analog of Theorem \ref{B}, Theorem \ref{C}, and Theorem \ref{D} without assuming conditions (i) and (ii). This is rather a straightforward generalization of \cite{MR3770183} Corollary 2.2 and Theorem 2.5.
\begin{VarG}[Variant \ref{3.2.12}, Variant \ref{3.2.13}, Lemma \ref{3.2.15}]\label{G}
Let $(\fq,M)\to (\fg,K)$ be a map of pairs over a commutative ring $k$, and $k\to k'$ be a ring homomorphism. Assume that $k'$ is finitely generated and projective as a $k$-module.
\begin{enumerate}
\renewcommand{\labelenumi}{(\arabic{enumi})}
\item There is a canonical isomorphism $\Hom_{\fg,K}(-,-)\otimes k'\cong \Hom_{\fg\otimes k',K\otimes k'}(-\otimes k',-\otimes k')$ on $(\fg,K)\cmod^{op}\times(\fg,K)\cmod$. Here $(\fg,K)\cmod^{op}$ denotes the opposite category to $(\fg,K)\cmod$.
\item There is a natural isomorphism
\[(I^{\fg,K}_{\fq,M}-)\otimes_k k'\cong I^{\fg\otimes_k k',K\otimes_k k'}_{\fq\otimes_k k',M\otimes_k k'}(-\otimes_k k').\]
\item There is a natural isomorphism of the functors on the unbounded derived category of $(\fq,M)$-modules:
\[\bR I^{\fg,K}_{\fq,M}(-)\otimes k'\simeq\bR I^{\fg,K}_{\fq,M}(-\otimes k').\]
\end{enumerate}
\end{VarG}
A typical application is to add $\sqrt{-1}$ to the given ring. In fact, we will need $\sqrt{-1}$ (and other fractions) to make integral forms of compact Lie groups to be split. For instance, the special orthogonal group $\SO(2)\cong\Spec \bZ\left[x,y\right]/(x^2+y^2-1)$ is isomorphic to the split torus of rank 1 after the base change to $\bZ\left[\sqrt{-1},\frac{1}{2}\right]$.
\begin{note}\label{1.2.7}
Let $(\fh,K)\to(\fg,K)$ be a map of pairs over a commutative ring with $K\to K$ being the identity. The left and right adjoint functors to the forgetful functor from the category of $(\fg,K)$-modules to that of $(\fh,K)$-modules will be denoted by $\ind^\fg_\fh$ and $\pro^\fg_\fh$ respectively.
\end{note}
Our strategy of the proofs of Theorem \ref{B} and Variant \ref{G} is to use general arguments on generators to reduce them to the group case through the induction $\ind^\fg_\fk:K\cmod\to(\fg,K)\cmod$. The remaining assertion is then a version of \cite{MR2015057} I.2.10 for flat affine group schemes. Theorem \ref{C} is basically obtained by formal arguments of adjunctions. Remark that we have to analyze the resulting bijections since the inverse map of Theorem \ref{B} is not canonical.

Finally, we discuss flat base changes of $\pro$. Unlike the case $k=\bC$, it should be difficult in general since the internal Hom of $K\cmod$ is quite complicated. In this paper, we find a practically nice setting to imitate the description of \cite{MR1330919} Proposition 5.96. Let $G$ be a real reductive group, $(\fg_\bC,K_\bC)$ be the associated pair over $\bC$ to $G$, and $(\fq_\bC,(K_L)_\bC)$ be a $\theta$-stable parabolic subpair, where $\theta$ is the Cartan involution. Let $\bar{\fu}_\bC$ be the opposite nilradical to $\fq_\bC$. Write $h$ for the element of the Cartan subalgebra corresponding to the half sum of roots of the nilradical $\fu_\bC$ of $\fq_\bC$ (\cite{MR1330919} Proposition 4.70).

Let $k$ be a Noetherian subring of $\bC$, and $(\fq,K_L)\subset(\fg,K)$ be a $k$-form of $(\fq_\bC,(K_L)_\bC)\subset(\fg_\bC,K_\bC)$. Assume that there is a complementary $K_L$-stable subalgebra $\bar{\fu}\subset\fg$ to $\fq$ which is a $k$-form of $\bar{\fu}_\bC$. Moreover, suppose that the following conditions are satisfied:
\begin{enumerate}
\renewcommand{\labelenumi}{(\roman{enumi})}
\item There is a free basis of $\fq$.
\item There is a free basis $\{E_{\alpha_i}\}$ of $\bar{\fu}$ consisting of root vectors of $\bar{\fu}_\bC$.
\item The $(K_L)_\bC$-orbit of $h$ is contained in the Cartan subalgebra. This is satisfied when the Levi subgroup of $G$ corresponding to $(\fq_\bC,(K_L)_\bC)$ belongs to the Harish-Chandra class in the sense of \cite{MR1330919} Definition 4.29.
\end{enumerate}
\begin{ThmH}[Proposition \ref{4.1.3}, Proposition \ref{4.2.2}, Proposition \ref{4.2.5}]\label{H}
Let $Z$ be a torsion-free $(\fq,K_L)$-module. Moreover, assume that $Z\otimes\bC$ is admissible and that $h$ acts on it as a scalar.
\begin{enumerate}
\renewcommand{\labelenumi}{(\arabic{enumi})}
\item The enveloping algebra $U(\bar{\fu})$ is decomposed into a direct sum $U(\bar{\fu})=\oplus_\cO U(\bar{\fu})_\cO$ of $K_L$-submodules $U(\bar{\fu})_\cO$ which are free of finite rank as $k$-modules.
\item There is an isomorphism of $K_L$-modules
\[\pro^\fg_\fq(Z)\cong\oplus\Hom_k(U(\bar{\fu})_\cO,Z).\]
In particular, it enjoys the base change formula
\[\pro^\fg_\fq(Z)\otimes\bC\cong\pro^{\fg_\bC}_{\fq_\bC}(Z\otimes\bC).\]
\end{enumerate}
\end{ThmH}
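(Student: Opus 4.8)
The plan is to imitate the proof of \cite{KV} Proposition 5.96 as faithfully as possible, the main point being that the admissibility and the scalar action of $h$ will force the internal Hom computation to restrict to the "eigenspace" pieces of $U(\bar{\fu})$, where the relevant $k$-modules are finitely generated and free.

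First I would set up part (1). Since $(K_L)_\bC$ acts on $\bar{\fu}_\bC$ preserving the root space decomposition and $\{E_{\alpha_i}\}$ is a $k$-basis of $\bar{\fu}$ consisting of root vectors (Condition (ii)), the PBW theorem gives a $k$-basis of $U(\bar{\fu})$ consisting of ordered monomials in the $E_{\alpha_i}$, and each such monomial is a weight vector for the Cartan, with weight a sum (with multiplicity) of negative roots of $\fu_\bC$. I would let $\cO$ run over the $(K_L)_\bC$-orbits of these weights — or more precisely over the set of weights modulo the $(K_L)_\bC$-action, which by Condition (iii) stays inside the Cartan subalgebra — and let $U(\bar{\fu})_\cO$ be the span of the monomials whose weight lies in $\cO$. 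Each $U(\bar{\fu})_\cO$ is then a $K_L$-submodule (one checks this using that $K_L$ is a $k$-form of $(K_L)_\bC$ and the decomposition is compatible with base change to $\bC$), and it is free of finite rank over $k$ because only finitely many ordered monomials in the $E_{\alpha_i}$ have a prescribed weight (the $E_{\alpha_i}$ all have strictly negative weights, so boundedness of the weight bounds the degree). This settles part (1).

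Next, for part (2), I would start from the description of $\pro^\fg_\fq$ as an internal Hom: by the very construction of the right adjoint to $\cF^{\fq,K_L}_{\fg,K}$ together with Conditions (i) and (ii) (which give $U(\fg)\cong U(\bar{\fu})\otimes_k U(\fq)$ as $(U(\fq),K_L)$-modules, hence identify the restriction of $U(\fg)$ with a free $U(\fq)$-module with basis the monomials in $\bar{\fu}$), one obtains $\pro^\fg_\fq(Z)\cong\Hom_{\fq}(U(\fg),Z)\cong\Hom_k(U(\bar{\fu}),Z)$ as $K_L$-modules, with the $K_L$-action the tensor (diagonal) one. The issue is that $\Hom_k(U(\bar{\fu}),Z)=\Hom_k(\oplus_\cO U(\bar{\fu})_\cO,Z)=\prod_\cO\Hom_k(U(\bar{\fu})_\cO,Z)$, a product rather than a sum. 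Here is where the hypotheses on $Z$ enter: $h$ acts on $Z$ as a scalar, say $c$, and $h$ acts on $U(\bar{\fu})_\cO$ (after $\otimes\bC$, but this is enough since $Z$ is torsion-free) by a scalar determined by $\cO$ — the value $\langle\text{weight},h\rangle$, which is constant on $\cO$ by $(K_L)_\bC$-invariance of $h$ up to the Cartan, Condition (iii). For the pairing $\Hom_k(U(\bar{\fu})_\cO,Z)$ to carry a $\fg$-compatible (in particular $h$-compatible) structure matching $\pro^\fg_\fq(Z)$, the contributing $\cO$ are exactly those for which the $h$-weights are compatible with $c$; admissibility of $Z\otimes\bC$ then bounds the relevant weights and hence cuts the product down to a \emph{finite} sub-product, which equals the sub-sum. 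This gives the displayed isomorphism $\pro^\fg_\fq(Z)\cong\oplus_\cO\Hom_k(U(\bar{\fu})_\cO,Z)$.

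Finally, the base change formula follows formally: each $U(\bar{\fu})_\cO$ is finitely generated and free over $k$, hence satisfies Condition \ref{1.1.3}, so $\Hom_k(U(\bar{\fu})_\cO,Z)\otimes\bC\cong\Hom_\bC(U(\bar{\fu})_\cO\otimes\bC,Z\otimes\bC)$ (using $Z$ torsion-free so that $-\otimes\bC$ is exact on the relevant diagram, or more simply flatness of $\bC$ over $k$), the direct sum is finite so commutes with $\otimes\bC$, and $U(\bar{\fu})_\cO\otimes\bC$ is the corresponding piece of $U(\bar{\fu}_\bC)$; one then recognizes the right-hand side as the analogous decomposition of $\pro^{\fg_\bC}_{\fq_\bC}(Z\otimes\bC)$ given by \cite{KV} Proposition 5.96 applied over $\bC$. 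The $K_L$-equivariance is preserved throughout since all identifications are of $K_L$-modules and base change of $K_L$-modules is functorial. I expect the main obstacle to be the second step: carefully justifying that the $h$-eigenvalue bookkeeping together with admissibility really collapses the infinite product $\prod_\cO\Hom_k(U(\bar{\fu})_\cO,Z)$ to a finite sum inside $\pro^\fg_\fq(Z)$ — this is exactly the delicate place in \cite{KV} Proposition 5.96 and requires being precise about which $K_L$-module structure and which $\fg$-action one is tracking on the internal Hom, and about the interaction of the Cartan action on $U(\bar{\fu})_\cO$ with base change to $\bC$ (which is why torsion-freeness of $Z$ and Condition (iii) are imposed).
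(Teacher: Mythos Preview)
Your outline for part (1) is essentially the paper's argument (Construction \ref{4.2.3}, Proposition \ref{4.2.2}): the paper is a bit more explicit, indexing by $G$-orbits in $\bR^G$ with $G=\pi_0((K_L)_\bC)$ via the family $h_x=\Ad(g_x)h$, and it invokes Lemma \ref{2.1.16} to descend the $K_L$-comodule decomposition from $\bC$ to $k$, exactly as you sketch.

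Part (2), however, has a real gap. The right adjoint $\pro^\fg_\fq$ is \emph{not} the plain $k$-linear $\Hom_k(U(\bar{\fu}),Z)$; over a general base it is the internal Hom $F(U(\bar{\fu}),Z)$ in $K_L\cmod$ (Proposition \ref{2.2.3}). Internal Hom turns coproducts in the first variable into limits, so $F(U(\bar{\fu}),Z)\cong\prod_\cO\Hom_k(U(\bar{\fu})_\cO,Z)$ \emph{with the product taken in $K_L\cmod$}, not in $k\cmod$; these do not coincide in general. The paper's device for identifying this categorical product with the direct sum is Lemma \ref{4.1.1}: it is \emph{not} that only finitely many $\cO$ occur (indeed $\Hom_k(U(\bar{\fu})_\cO,Z)\neq 0$ for infinitely many $\cO$ whenever $Z\neq 0$), but that for every finitely generated test object $Q\in K_L\cmod$ one has $\Hom_{K_L}(Q,\Hom_k(U(\bar{\fu})_\cO,Z))=0$ for all but finitely many $\cO$. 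Under this vanishing the (infinite) direct sum is itself a product in $K_L\cmod$, hence equals $F(U(\bar{\fu}),Z)$. The vanishing is supplied by Proposition \ref{4.1.2} (1) together with Proposition \ref{4.2.5} (admissibility of the $\bC$-fibre plus torsion-freeness), not by an $h$-eigenvalue argument killing individual summands. Your sentence ``the direct sum is finite so commutes with $\otimes\bC$'' therefore fails as stated; the sum is infinite, but infinite direct sums commute with $\otimes\bC$ since tensor products preserve colimits, and applying Lemma \ref{4.1.1} again over $\bC$ yields the identification with $\pro^{\fg_\bC}_{\fq_\bC}(Z\otimes\bC)$.
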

Suppose that we have a semidirect product $\fq=\fl\oplus\fu$ which is compatible with the Levi decomposition $\fq_\bC=\fl_\bC\oplus\fu_\bC$. Assume also that $\fu$ is free of rank $r<\infty$. For an $(\fl,K_L)$-module $\lambda$ on $k$, (temporarily) define $A_\fq(\lambda)$ as $R^{\dim(\fu_\bC\cap \fk_\bC)}\Gamma\pro^\fg_\fq(\lambda\otimes\wedge^r\fu)$. Then we obtain the base change formula of $A_\fq(\lambda)$ along $k\to\bC$ by combining Theorem \ref{H} and Theorem \ref{D}.
\subsection{Notations}
For a coalgebra $C$ over a commutative ring $k$, $C\comod$ denotes the category of $C$-comodules.

For an integral domain $k$, its fractional field will be denoted by $\Frac(k)$.
\renewcommand{\abstractname}{Acknowledgements}
\begin{abstract}
First of all, the author is greatly indebted to his advisor Professor Hisayosi Matumoto for helpful discussions, advice, careful reading of drafts of this paper, and his patience.

Thanks to Teruhisa Koshikawa and Toshihisa Kubo for the question on the relation of the functor $I^{\fg,K}_{\fq,M}$ and the base change along $\bZ\to\bC$ which motivates him to start the studies in this paper.

He found the most results of this paper during his stay in Karlsruhe. He thanks Fabian Januszewski, Liang-Wen Lin, Martin Bari\v{c}, Sven Caspart, Tobias Columbus, Frank Herrlich, Christoph Karg, Stefan K\"uhnlein, Verena M\"ohler, Benjamin Peters, Felix Wellen, and all other people he met there for helping and supporting him. He is grateful to Fabian Januszewski for stimulating discussions, and comments and advice about further directions from his papers \cite{MR4007195} and \cite{1606.04320}. 

This work was supported by JSPS Kakenhi Grant Number 921549 and the Program for Leading Graduate Schools, MEXT, Japan.
\end{abstract}
\section{Comodules}
\subsection{Generalities on comodules}
In this section, let $C$ be a coalgebra over a commutative ring $k$. It is easy to formulate the base change adjunction of comodules. Namely, for a $k$-algebra $k'$, the forgetful functor and the base change give rise to the natural bijection
\[\Hom_C(V,W)\cong\Hom_{C\otimes k'}(V\otimes k',W),\]
where $V$ is a $C$-comodule, and $W$ is a $C\otimes k'$-comodule.

In the rest, assume that $C$ is flat over $k$. We note general constructions of comodules. Let $V$ be a $C$-comodule, $V_0$ be a $k$-submodule, and $S$ be a subset of $V$.
\begin{cons}\label{2.1.1}
Define $\cI_{V,V_0}$ as the full subcategory of the overcategory $C\comod_{/V}$ spanned by subcomodules of $V$ contained in $V_0$, and $V_0^\circ$ be the colimit of the canonical functor $\cI_{V,V_0}\to C\comod$.
\end{cons}
\begin{prop}\label{2.1.2}
\begin{enumerate}
\renewcommand{\labelenumi}{(\arabic{enumi})}
\item The category $\cI_{V,V_0}$ is filtered.
\item The comodule $V_0^\circ$ exhibits the maximal subcomodule of $V$ contained in $V_0$.
\end{enumerate}
\end{prop}
\begin{proof}
To prove (1), suppose that we are given two comodules $W,W'\subset V_0$. Then the image of the sum $W\oplus W'\to V$ belongs to $\cI_{V,V_0}$. Since $\cI_{V,V_0}$ is a diagram of subobjects of a fixed object of a category, the other condition automatically follows. Part (2) now follows since filtered colimits of $k$-modules are exact.
\end{proof}
\begin{cons}\label{2.1.3}
Define $\cJ_{V,S}$ as the full subcategory of the overcategory $C\comod_{/V}$ spanned by subcomodules of $V$ containing $S$, and set $\langle S\rangle$ as the limit of the canonical diagram $\cJ_{V,S}\to C\comod$. 
\end{cons}
\begin{prop}\label{2.1.4}
The comodule $\langle S\rangle$ exhibits the smallest subcomodule of $V$ containing $S$.
\end{prop}
\begin{proof}
Choose a vertex $S\subset W\subset V$ of $C\comod_{/V}$, and denote the composite arrow $\langle S\rangle\to W\to V$ by $i$. Observe that $i$ is independent of the choice of $W$. In fact, take another object $S\subset W'\subset V$. Since monomorphisms are stable under pullbacks, $W\times_V W'$ is a subcomodule of $W,W'$ containing $S$. The resulting commutative diagram
\[\xymatrix{&W\ar[rd]&\\
\langle S\rangle\ar[r]\ar[ru]\ar[rd]&W\times_V W'\ar[r]\ar[u]\ar[d]&V\\
&W'.\ar[ru]}\]
shows the independence.

We next prove that the map $\langle S\rangle\to V$ is a monomorphism. Suppose that we are given two homomorphisms $U\overset{f}{\underset{g}{\rightrightarrows}}\langle S\rangle\overset{i}{\to}V$ such that $i\circ f=i\circ g$. Let us denote the canonical projection $\langle S\rangle\to W$ by $p_W$, and the inculsion $W\hookrightarrow V$ by $i_W$. The equality
\[i_W\circ p_W\circ f=i\circ f=i\circ g=i_W\circ p_W\circ g\]
implies $p_W\circ f=p_W\circ g$. Therefore these equal maps form a cone over $\cJ_{V,S}$ whose vertex is $U$. Moreover, the two maps $U\overset{f}{\underset{g}{\rightrightarrows}}\langle S\rangle$ are morphisms of cones. Since $\langle S\rangle$ is terminal, the two arrows are equal.

Finally, we prove that $\langle S\rangle$ is the minimum. In fact, if we are given a subcomodule $S\subset W\subset V$, then we have a commutative diagram
\[\xymatrix{\langle S\rangle\ar[rd]\ar[rr]&&V\\
&W\ar[ru]}\]
by definition. Since the upper horizontal and the upper right diagonal arrows are injective, so is the rest.
\end{proof}
\begin{ex}[\cite{MR499562} 27 Exercise 8]\label{2.1.5}
Set $C$ as the coordinate ring of the affine group scheme $\SL_2$ over $\bZ$. Let $V$ be an irreducible representation of $\SL_2$ over $\bQ$ with $\dim V=n+1$, and $v_n$ be a highest weight vector of $V$. Then $V^m:=\langle v_n\rangle\subset V$ is described as follows:
\[V^m=\oplus_{i=0}^{n}\bZ v_{n-2i}\]
\[Ev_{n-2i}=(n-i+1)v_{n-2i+2}\]
\[Fv_{n-2i}=(i+1)v_{n-2i-2}.\]
\end{ex}
\begin{prop}[\cite{MR2015057} I.2.13]\label{2.1.6}
For an element $v\in V$, the comodule $\langle v\rangle$ is contained in a finitely generated $k$-module.
\end{prop}
This leads us to a categorical conclusion for comodules. To state it, we prepare some general teminologies and facts. For our applications, we may restrict ourselves to abelian categories if necessary. For general references, see \cite{MR1294136} Section 1, \cite{MR1291599} Section 4, and \cite{MR1313497} Section 1, 5.
\begin{defn}\label{2.1.7}
Let $\cA$ be a locally small cocomplete abelian category. Then a small set $G$ of objects of $\cA$ is called a family of generators if the following equivalent conditions are satisfied:
\begin{enumerate}
\renewcommand{\labelenumi}{(\alph{enumi})}
\item Maps $f,g:X\to Y$ satisfying $f\circ e=g\circ e$ for any $Q\in G$ and $e\in\Hom(Q,X)$ are equal.
\item For every object $X\in\cA$, the morphism $\coprod_{\stackrel{Q\in G}{e\in\Hom(Q,X)}}Q\stackrel{\coprod e}{\to}X$
is epic.
\item If we are given a monomorphism $i:X\to Y$ which is not an isomorphism, there exists a map $Q\to Y$ with $Q\in G$ that does not factors through $i$.
\item A morphism $X\to Y$ in $\cA$ is an isomorphism if and only if for any member $Q\in G$, the induced map $\Hom(Q,X)\to\Hom(Q,Y)$ is a bijection.
\end{enumerate}
\end{defn}
The next fact is obvious by definition:
\begin{lem}\label{2.1.8}
A functor between locally small (cocomplete abelian) categories with a faithful right adjoint functor respects families of generators.
\end{lem}
\begin{defn}\label{2.1.9}
Let $\cC$ be a locally small category with small filtered colimits. Then an object $A\in\cC$ is said to be compact if for any small filtered diagram $Y_\bullet$ of $\cC$, the induced map
\[\varinjlim\Hom(A,Y_\bullet)\to\Hom(A,\varinjlim Y_\bullet)\]
is a bijection.
\end{defn}
\begin{defn}\label{2.1.10}
A locally small cocomplete abelian category is compactly generated if it admits a small set of compact generators.
\end{defn}
We have a nontrivial consequence from characterizations of compactly generated categories:
\begin{lem}[\cite{MR1294136} Remark 1.9, the proof of Theorem 1.11]\label{2.1.11}
Let $\cA$ be a compactly generated (abelian) category with a small set $G$ of compact generators. Then compact objects of $\cA$ are generated by $G$ under finite colimits.
\end{lem}
These are used in 3.2 and 4.1 as key techniques. We now go back to comodules.
\begin{cor}\label{2.1.12}
If $k$ is Noetherian, the category $C\comod$ is compactly generated. In other words, every comodule is the union of its finitely generated subcomodules.
\end{cor}
\begin{proof}
The assertions follow from Proposition \ref{2.1.6}. Note that for a $C$-comodule $V$, the following conditions are equivalent (\cite{MR2066503} Proposition 1.3.3):
\begin{enumerate}
\renewcommand{\labelenumi}{(\alph{enumi})}
\item $V$ is compact in $C\comod$;
\item $V$ is compact as a $k$-module;
\item $V$ is a finitely presented $k$-module.
\end{enumerate}
\end{proof}
\begin{cor}\label{2.1.13}
Suppose that $k$ is a PID. Then indecomposable comodules which are free of finite rank over $k$ form a family of generators of $C\comod$.
\end{cor}
\begin{proof}
According to the proof of \cite{MR2138086} Proposition 1.2, subcomodules of direct sums of finite copies of $C$ form a family of generators of $C\comod$. In view of Proposition \ref{2.1.6}, we may restrict the members of the family to torsion-free finitely generated subcomodules. The assertion is now obvious.
\end{proof}
The next lemma is used in the end of this paper:
\begin{lem}\label{2.1.14}
Let $k\to k'$ be an injective homomorphism of commutative rings, $C$ be a flat coalgebra over $k$, and $V$ be a $C$-comodule. Suppose that the following conditions are satisfied:
\begin{enumerate}
\renewcommand{\labelenumi}{(\roman{enumi})}
\item $V$ is flat as a $k$-module.
\item There is a decomposition $V\cong \oplus_\cO V_\cO$ as a $k$-module.
\item $V\otimes k'\cong \oplus_\cO V_\cO\otimes k'$ is a direct sum of $C\otimes k'$-subcomodules of $V\otimes k'$.
\end{enumerate}
Then each of $V_\cO$ is a subcomodule of $V$, and $V\cong \oplus_\cO V_\cO$ exhibits a decomposition as a $C$-comodule.
\end{lem}
\begin{proof}
According to (ii), we have an isomorphism $V\otimes C\cong \oplus_\cO V_\cO\otimes C$. It will suffice to show that the coaction respects each $\cO$-component. Take the base change along $k\to k'$ to obtain a commutative diagram
\[\xymatrix{V\ar[r]\ar[d]&V\otimes C\ar[d]\\
V\otimes k'\ar[r]&(V\otimes k')\otimes_{k'}(C\otimes k').}\]
Since $V$ and $C$ are flat, the vertical arrows are injective. Therefore the assertion is reduced to $k=k'$, and it is equivalent to (iii).
\end{proof}
\subsection{Representations of flat affine group schemes and $(\fg,K)$-modules}
Let $H$ be a commutative Hopf algebra, and write $K=\Spec H$. For a $k$-module $V$ and a $k$-algebra $R$, set $\Aut_R(V\otimes R)$ as the group of automorphisms of the $R$-module $V\otimes R$. This determines a group $k$-functor $\Aut(V):\CAlg_k\to\Grp;R\mapsto\Aut_R(V\otimes R)$, where $\CAlg_k$ (resp.\ $\Grp$) is the category of commutative $k$-algebras (resp.\ groups). We also write $\CAlg_{k,{\rm flat}}$ for the full subcategory of $\CAlg_k$ spanned by flat $k$-algebras. Note that $\CAlg_{k,{\rm flat}}$ is stable under $\otimes$. Recall that a representation of $K$ is a $k$-module $V$, equipped with a homomorphism $K\to\Aut(V)$ of group $k$-functors. Equivalently, a representation is a $k$-module, equipped with an $R$-linear group action of $K(R)$ on $V\otimes R$ for each $k$-algebra $R$ such that for $f:R\to R'$ and $g\in K(R)$ the diagram
\[\xymatrix{V\otimes R\ar[r]^{\nu(g)}\ar[d]_f&V\otimes R\ar[d]^f\\
V\otimes R'\ar[r]_{\nu(f\circ g)}&V\otimes R'
}\]
commutes. A $k$-module homomorphism $f:V\to V'$ of representations of $K$ is said to be a $K$-homomorphism if for all $k$-algebras $R$, the diagrams
\[\xymatrix{
V\otimes R\ar[r]^{\nu(g)}\ar[d]_{f\otimes id_R}&V\otimes R\ar[d]^{f\otimes id_R}\\
V'\otimes R\ar[r]_{\nu'(g)}&V'\otimes R
}\]
commute. Set $K\cmod$ as the category of representations of $K$. If $K$ is flat over $k$, define $K\cmod_{\rm flat}$ in a similar way.
\begin{lem}\label{2.2.1}
The categories $K\cmod$ and $H\comod$ are isomorphic. Moreover, if $K$ is flat, these are also isomorphic to $K\cmod_{\rm flat}$.
\end{lem}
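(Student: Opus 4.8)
The plan is to run the classical dictionary between natural group actions of $K=\Spec H$ and $H$-comodules, the one crucial device being evaluation of an action at the \emph{universal point} $\mathrm{id}_H\in K(H)=\Hom_{k\text{-alg}}(H,H)$.

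From a comodule to a representation: given an $H$-comodule $(V,\rho)$ with $\rho\colon V\to V\otimes H$, for each $k$-algebra $R$ and each $g\in K(R)$ I would set $\nu(g)\colon V\otimes R\to V\otimes R$ to be
\[V\otimes R\xrightarrow{\rho\otimes\mathrm{id}_R}V\otimes H\otimes R\xrightarrow{\mathrm{id}_V\otimes g\otimes\mathrm{id}_R}V\otimes R\otimes R\xrightarrow{\mathrm{id}_V\otimes m_R}V\otimes R.\]
$R$-linearity and the naturality square in $f\colon R\to R'$ are immediate. The multiplicativity $\nu(g_1g_2)=\nu(g_1)\nu(g_2)$ unwinds, using the description of the group law on $K(R)$ by convolution through $\Delta$, to coassociativity of $\rho$; likewise $\nu(e)=\mathrm{id}$ is the counit axiom, after which $\nu(g)$ is automatically invertible with inverse $\nu(g^{-1})$ because $K(R)$ is a group. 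Thus $(V,\rho)\mapsto(V,(\nu(g)))$, and a comodule homomorphism $f$ becomes a $K$-homomorphism by the same formula since $f\otimes\mathrm{id}_R$ commutes with each building block.

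From a representation to a comodule: given $(V,(\nu_R)_R)$ I would set $\rho(v):=\nu_H(\mathrm{id}_H)(v\otimes 1)$. The counit axiom for $\rho$ follows from naturality along $\epsilon\colon H\to k$ applied to $g=\mathrm{id}_H$, using that $\epsilon$ is the identity of $K(k)$; coassociativity follows from naturality along $\Delta\colon H\to H\otimes H$ together with the identity $i_1\cdot i_2=\Delta$ in $K(H\otimes H)$ for the two coprojections $i_1,i_2\colon H\to H\otimes H$, plus multiplicativity of $\nu$. That the two passages are mutually inverse is a short diagram chase whose only input is again naturality: for $g\in K(R)$ the map $\nu_R(g)$ is obtained from $\nu_H(\mathrm{id}_H)$ by extension of scalars along $g\colon H\to R$, so rebuilding $\nu$ from $\rho$ recovers $\nu$, and rebuilding $\rho$ from the $\nu$ attached to $\rho$ is immediate. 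The same computation applied to $f\otimes\mathrm{id}$ identifies $K$-homomorphisms with comodule homomorphisms, giving the isomorphism of categories $K\cmod\cong H\comod$.

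For the flat refinement: when $K$ is flat, $H$ is itself a flat $k$-algebra, and so are $k$ and $H\otimes H$, so all the test algebras used above ($k$, $H$, $H\otimes H$, and arbitrary flat $R$) are admissible for objects of $K\cmod_{\rm flat}$; hence the same recipe, read only over flat test algebras, yields mutually inverse functors $K\cmod_{\rm flat}\cong H\comod$ (verifying the action axioms over flat $R$ only is a weakening of what was already checked, and the reconstructed action in fact lies in $K\cmod$). Both identifications are "evaluate at $\mathrm{id}_H$", so they are compatible with the evident forgetful functor $K\cmod\to K\cmod_{\rm flat}$, which is therefore an isomorphism of categories. I expect the only point needing genuine care to be the bookkeeping of the group law under evaluation at $\mathrm{id}_H$ and at $i_1,i_2$ in the coassociativity verifications; everything else is formal, and the flatness hypothesis enters only to make $H$ and $H\otimes H$ admissible test algebras in this last step.
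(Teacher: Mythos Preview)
Your proposal is correct and follows essentially the same route as the paper: the first isomorphism is the classical dictionary (which the paper simply cites as \cite{Wa} Theorem 3.2), and for the flat refinement both you and the paper observe that reconstructing the comodule structure only requires the test algebras $k$, $H$, and $H\otimes H$, all of which are flat when $H$ is. You have spelled out the details that the paper leaves to the reference, but the underlying idea is identical.
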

\begin{proof}
See \cite{MR547117} Theorem 3.2 for the first assertion. In view of its proof, the coaction of $H$ is recovered by the actions of the valued point groups $K(k)$, $K(H)$, and $K(H\otimes H)$. Therefore the same argument proves $H\comod\cong K\cmod_{\rm flat}$ if $H$ is flat.
\end{proof}
Suppose next that $H$ is flat over $k$. Though we have a general description of the internal Hom of the symmetric monoidal category $H\comod$ (\cite{MR2066503} Theorem 1.3.1), it is usually too complicated to compute in practice. Here we give a better realization in a special case:
\begin{prop}\label{2.2.2}
Let $K$ be an affine group scheme, and $V,V'$ be $K$-modules.
\begin{enumerate}
\renewcommand{\labelenumi}{(\arabic{enumi})}
\item If $V$ is finitely generated and projective as a $k$-module, there is a natural $K$-action on $\Hom(V,V')$. Moreover, the standard adjunction
$\Hom_k(-\otimes V,V')\cong\Hom_k(-,\Hom(V,V'))$
restricts to
\[\Hom_K(-\otimes V,V')\cong\Hom_K(-,\Hom(V,V')).\]
\item Suppose that $K$ is flat. If $V$ satisfies Condition \ref{1.1.4} (2), there is a natural $K$-action on $\Hom(V,V')$. Moreover, the standard adjunction
$\Hom_k(-\otimes V,V')\cong\Hom_k(-,\Hom(V,V'))$
restricts to
\[\Hom_K(-\otimes V,V')\cong\Hom_K(-,\Hom(V,V')).\]
\end{enumerate}
\end{prop}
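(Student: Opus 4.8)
The plan is to settle (1) first, as the clean dualizable case, and then bootstrap (2) from it by using the flatness of $K$ to restrict attention to flat test algebras.

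For (1): since $V$ is finitely generated and projective over $k$ it is dualizable, so $\Hom_k(V,V')\cong V^\vee\otimes_k V'$ with $V^\vee=\Hom_k(V,k)$, and the canonical map $\Hom_k(V,V')\otimes_k R\to\Hom_R(V\otimes R,V'\otimes R)$ is an isomorphism for every $k$-algebra $R$. The contragredient construction makes $V^\vee$ a $K$-module: the composite $K\to\Aut(V)\xrightarrow{\,g\mapsto(g^{-1})^\vee\,}\Aut(V^\vee)$ is a homomorphism of group $k$-functors, induced on coordinate rings by the antipode of $H$. Hence $\Hom_k(V,V')\cong V^\vee\otimes_k V'$ inherits the tensor-product $K$-action; on $R$-points it is the conjugation action $\phi\mapsto\nu'_R(g)\circ\phi\circ\nu_R(g)^{-1}$ on $\Hom_R(V\otimes R,V'\otimes R)$, plainly functorial in $R$. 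To see that the tensor--hom adjunction is $K$-equivariant on both sides, fix a $K$-module $U$, a $k$-linear $f\colon U\otimes V\to V'$ and its transpose $\tilde f\colon U\to\Hom_k(V,V')$; after base change along any $k$-algebra $R$ and the identification $\Hom_k(V,V')\otimes_k R\cong\Hom_R(V\otimes R,V'\otimes R)$, the assertion that $f\otimes R$ intertwines the diagonal action of a point $g\in K(R)$ on $(U\otimes V)\otimes R$ with its action on $V'\otimes R$ is exactly the transpose, under the $R$-linear tensor--hom adjunction, of the assertion that $\tilde f\otimes R$ intertwines the action of $g$ on $U\otimes R$ with the conjugation action of $g$ on $\Hom_R(V\otimes R,V'\otimes R)$. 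Running this over all $R$ yields $\Hom_K(U\otimes V,V')=\Hom_K(U,\Hom_k(V,V'))$ inside the $k$-linear identification, which proves (1).

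For (2): the proof of (1) used finite generation and projectivity of $V$ only (a) to present the $K$-module $\Hom_k(V,V')$ as $V^\vee\otimes_k V'$ and (b) to make $\Hom_k(V,-)$ commute with the base changes $-\otimes_k R$. Since $K$ is flat, Lemma \ref{2.2.1} lets us describe $K$-modules through their $R$-points on flat $k$-algebras alone, so one reruns the argument with $R$ ranging over $\CAlg_{k,{\rm flat}}$. Condition \ref{1.1.3} says precisely that $V^\vee\otimes_k W\to\Hom_k(V,W)$ is an isomorphism for flat $W$; applied with $W=H$ it equips $V^\vee$ with the contragredient comodule structure, and one then tries to assemble from the coactions of $V^\vee$ and of $V'$ a coaction $\Hom_k(V,V')\to\Hom_k(V,V')\otimes H$, recovering the conjugation action of $K(R)$ on $\Hom_R(V\otimes R,V'\otimes R)$ over flat $R$, and finally the adjunction as above.

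The step I expect to be the main obstacle is that, under Condition \ref{1.1.3} alone, $\Hom_k(V,-)$ need not commute with base change into non-flat modules, so $\Hom_k(V,V')\cong V^\vee\otimes_k V'$ is unavailable for general $V'$ and one must check by hand that the maps in play — in particular that the putative coaction really lands in $\Hom_k(V,V')\otimes H$ rather than merely in $\Hom_k(V,V'\otimes H)$ — are the claimed isomorphisms in exactly the range used. My plan for this is to detect the needed isomorphisms on a generating family: an isomorphism of $K$-modules can be tested on $\Hom_K(Q,-)$ with $Q$ ranging over a family of generators of $K\cmod$ (Definition \ref{2.1.9}(d)) — for instance the subcomodules of finite direct sums of $H$ from the proof of Corollary \ref{2.1.15}, each of them small in the sense of Proposition \ref{2.1.8} — so that the verification of the adjunction reduces to source objects $Q$ of that form and target modules $V'$ which one may take finitely presented, and for these Condition \ref{1.1.3} does upgrade to full compatibility of $\Hom_k(V,-)$ with flat base change, so that the computation of (1) applies verbatim. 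Carrying out this reduction — matching the abstractly existing internal $\Hom$ with the naive module $\Hom_k(V,V')$ without the crutch of dualizability — is the technical heart of the argument.
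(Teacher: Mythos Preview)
For part (1) your argument is essentially the paper's: you phrase it through the dualizing isomorphism $\Hom_k(V,V')\cong V^\vee\otimes_k V'$, while the paper writes the identification $\Hom(V,V')\otimes R\cong\Hom_R(V\otimes R,V'\otimes R)$ directly, but in both cases the $K$-structure is the conjugation action on $R$-points and the adjunction is checked functorially in $R$.

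For part (2) the paper is far more direct than you are: it simply invokes Lemma~\ref{2.2.1} to say that when $K$ is flat one may restrict the test algebras $R$ to flat ones, and then reruns the construction of (1) verbatim. Your opening move is the same, but the subsequent workaround through generators has a genuine gap. The generator criterion (Definition~\ref{2.1.9}(d)) lets you detect an isomorphism of $K$-modules by probing with $\Hom_K(Q,-)$ for $Q$ in a generating family; this can only reduce the \emph{source} variable in an adjunction and gives no mechanism to replace the fixed module $V'$ by a finitely presented one, so the clause ``target modules $V'$ which one may take finitely presented'' is unsupported. Even granting that reduction, finite presentation of $V'$ does not deliver what you claim: the compatibility $\Hom_k(V,M)\otimes R\cong\Hom_k(V,M\otimes R)$ for flat $R$ is governed by finite presentation of $V$, not of the target $M$, so Condition~\ref{1.1.3} does not ``upgrade'' in the way you assert. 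There is also a circularity: you propose to ``detect the needed isomorphisms'' by testing on $\Hom_K(Q,-)$, but that already presupposes the $K$-module structure on $\Hom_k(V,V')$ whose construction is precisely the point at issue. The paper sidesteps all of this by asserting that the identification $\Hom(V,V')\otimes R\cong\Hom_R(V\otimes R,V'\otimes R)$ is available for flat $R$ under Condition~\ref{1.1.3} and proceeding as in (1); whatever subtlety you are worried about, your detour does not resolve it.
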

\begin{proof}
Suppose that $V$ is finitely generated and projective as a $k$-module. Then for any $k$-algebra $R$, we have a canonical isomorphism $\Hom(V,V')\otimes R\cong\Hom(V,V'\otimes R)\cong\Hom_R(V\otimes R,V'\otimes R)$. Under this identification, we put a $K(R)$-action on $\Hom(V,V')\otimes R$ by
\[(\nu(g)f)(v)=\nu_{V'}(g)f(\nu_V(g^{-1})v).\]
Running through all $R$, we obtain $\Hom(V,V')\in K\cmod$. We can see the adjunction in the usual way. If $K$ is flat, we may restrict $R$ to be flat (Lemma \ref{2.2.1}). Then the same argument works for $V$ with Condition \ref{1.1.4} (2).
\end{proof}
\begin{proof}[Proof of Lemma \ref{1.1.9}]
This is proved in a similar way to \cite{MR4007195} Theorem 2.3.6 (2). The adjoint representation of $K$ makes sense from Lemma \ref{2.2.1} (see also loc.\ cit.\ below Condition 2.2.2). The construction of the colocalization $(-)^\fk$ in loc.\ cit.\ Lemma 2.3.2 works from Proposition \ref{2.2.2} (2).
\end{proof}
\begin{prop}\label{2.2.3}
Let $(\fg,K)$ be a pair over a commutative ring $k$.
\begin{enumerate}
\renewcommand{\labelenumi}{(\arabic{enumi})}
\item For $(\fg,K)$-modules $V$ and $V'$, the tensor product $V\otimes W$ is a $(\fg,K)$-module for the tensor representation of $K$ and
\[\pi_{V\otimes V'}(x)(v\otimes v')=\pi_V(x)v\otimes v'+v\otimes \pi_{V'}(x)v',\]
where $v\otimes v'\in V\otimes V'$, $x\in\fg$, and $\pi_V$ (resp.\ $\pi_{V'}$) denotes the action of $\fg$ on $V$ (resp.\ $V'$).
\item The category $(\fg,K)\cmod$ is closed symmetric monoidal for the monoidal structure defined in (1). Moreover, the closed structure is compatible with that of $K\cmod$.
\end{enumerate}
\end{prop}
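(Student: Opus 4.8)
The plan is to exhibit $(\fg,K)\cmod$ as the full subcategory of ``normalized'' modules inside the category of modules over a Hopf monoid in $K\cmod$ and then to transport the closed symmetric monoidal structure; this is the argument of \cite{H1}, with Condition \ref{1.1.5} and Proposition \ref{2.2.2}(2) replacing the projectivity hypothesis of loc.\ cit. First I record that $U(\fg)$ is a cocommutative Hopf monoid in $K\cmod$: the $k$-module $\fg$ carries a $K$-action by Lie algebra automorphisms (part of the datum of a pair), so each term of the standard filtration of $U(\fg)$, being a subquotient of a tensor power of the $K$-module $\fg$, is a $K$-module, and $U(\fg)=\colim$ of these is a $K$-module; its multiplication, comultiplication $\Delta(x)=x\otimes 1+1\otimes x$ ($x\in\fg$), counit and antipode $S(x)=-x$ are $K$-equivariant by functoriality in $\fg$. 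Since $K$ is flat, $K\cmod\cong H\comod$ (Lemma \ref{2.2.1}) is closed symmetric monoidal by \cite{Hov} Theorem 1.3.1.

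\textbf{Transporting the closed structure.} For a cocommutative Hopf monoid $B$ in a closed symmetric monoidal category $\cC$, the category of $B$-modules in $\cC$ is again closed symmetric monoidal: the monoidal product is $\otimes_\cC$ with $B$ acting through $\Delta$ --- for $B=U(\fg)$ this is exactly the Leibniz formula of assertion (1) --- the unit is $k$ with $B$ acting through the counit, and the internal Hom is $\cHom_\cC(-,-)$ with $B$ acting through $\Delta$ and $S$; for a primitive element $x$ this action is determined by $\mathrm{ev}\bigl((x\cdot f)\otimes v\bigr)=x\cdot\mathrm{ev}(f\otimes v)-\mathrm{ev}\bigl(f\otimes(x\cdot v)\bigr)$, $\mathrm{ev}$ being the evaluation of $\cC$. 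The associativity, unit and symmetry constraints and the tensor--Hom adjunction are inherited from $\cC$ by the usual Sweedler-notation manipulations with the (co)associativity, (co)unit, cocommutativity and antipode axioms. Now a $U(\fg)$-module in $K\cmod$ is precisely a $K$-module with a $K$-equivariant $\fg$-action, i.e.\ an object fulfilling every axiom of a $(\fg,K)$-module except the coincidence of the two $\fk$-actions, so $(\fg,K)\cmod$ is the full subcategory of normalized objects; it remains to check this subcategory contains $k$ and is stable under $\otimes_k$ and $\cHom$. That it contains $k$ is clear ($\epsilon$ vanishes on $\fg$, and the differential of the trivial $K$-action is $0$). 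For $V\otimes V'$, the differential at $\xi\in\fk$ of the tensor $K$-representation is $d\nu_V(\xi)\otimes 1+1\otimes d\nu_{V'}(\xi)$, which by normalization of $V,V'$ equals $\pi_V(\psi(\xi))\otimes 1+1\otimes\pi_{V'}(\psi(\xi))=\pi_{V\otimes V'}(\psi(\xi))$, the $\fg$-action induced through $\Delta$; combined with the $K$-equivariance of that $\fg$-action (functoriality of $\Delta$), this also proves assertion (1).

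\textbf{The internal Hom, and the main obstacle.} The one step needing a real argument is stability under the internal Hom, because $\cHom_K(V,V')$ is \emph{not} the naive module $\Hom_k(V,V')$ in general --- the concrete description of Proposition \ref{2.2.2} requires Condition \ref{1.1.3} --- so one cannot simply differentiate $(\nu(g)f)(v)=\nu_{V'}(g)f(\nu_V(g^{-1})v)$. Instead I would differentiate the $K$-equivariant evaluation $\mathrm{ev}\colon\cHom_K(V,V')\otimes V\to V'$ at $\xi\in\fk$ and invoke normalization of $V,V'$, obtaining
\[\mathrm{ev}\bigl((D(\xi)f)\otimes v\bigr)=\pi_{V'}(\psi(\xi))\,\mathrm{ev}(f\otimes v)-\mathrm{ev}\bigl(f\otimes\pi_V(\psi(\xi))v\bigr),\]
where $D(\xi)$ denotes the differential of the $K$-action on $\cHom_K(V,V')$; the right-hand side is $\mathrm{ev}\bigl((\psi(\xi)\cdot f)\otimes v\bigr)$ for the $\fg$-action of the previous step (apply its defining identity to the primitive element $\psi(\xi)\in U(\fg)$). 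Since the natural map $\cHom_K(V,V')\to\Hom_k(V,V')$ is injective (the source is the rational part of the target), this equality of evaluation composites forces $D(\xi)$ to agree with the action of $\psi(\xi)$ on $\cHom_K(V,V')$, so $\cHom_K(V,V')$ is normalized. The remaining points of assertion (2) --- that the constraints and the tensor--Hom adjunction on $B$-modules in $K\cmod$ restrict to $(\fg,K)\cmod$, and that the forgetful functor $(\fg,K)\cmod\to K\cmod$ is strong closed symmetric monoidal (the asserted compatibility) --- are then formal, being built into the construction. I expect this internal-Hom step to be the only genuine obstacle; the rest is bookkeeping, and the whole construction follows \cite{H1}, the new inputs being Lemma \ref{2.2.1} and Proposition \ref{2.2.2}(2).
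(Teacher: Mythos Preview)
Your approach is correct in outline and genuinely different from the paper's. You package everything via the Hopf-monoid-in-$K\cmod$ formalism of \cite{H1} and then restrict to the normalized full subcategory, whereas the paper argues directly: it writes down the $\fg$-action on $F(V,V')$ as an explicit composite of $K$-equivariant arrows (using $\fg\to U(\fg)\otimes U(\fg)$, the symmetry, and the evaluations), checks the Lie bracket identity by hand via the characterization $(\pi(x)f)(v)=\pi_{V'}(x)f(v)-f(\pi_V(x)v)$, and then verifies the adjunction. Your route is cleaner conceptually and makes the compatibility with $K\cmod$ automatic; the paper's is more self-contained and avoids invoking general facts about module categories over Hopf monoids.

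There is, however, one step in your argument that is shakier than you indicate. To conclude $D(\xi)=\pi(\psi(\xi))$ on $\cHom_K(V,V')$ you invoke injectivity of the comparison map $\cHom_K(V,V')\to\Hom_k(V,V')$, justified by the slogan ``the source is the rational part of the target.'' Over a field this is standard, but over a general commutative ring the rational-part description of Hovey's internal Hom is not immediate, and the injectivity is not obvious from the equalizer construction in \cite{Hov}. You do not need it: both $D$ and $\pi\circ\psi$ are $K$-equivariant maps $\fk\otimes F(V,V')\to F(V,V')$ (the first because differentials of $K$-actions are always $K$-equivariant for the adjoint action on $\fk$; the second because your Hopf-monoid action is built from $K$-morphisms). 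Hence by the very tensor--Hom adjunction in $K\cmod$ you have just set up, it suffices to compare their adjoints $\fk\otimes F(V,V')\otimes V\to V'$, which is exactly the evaluation identity you already established. This is in fact how the paper handles the analogous steps (its phrase ``it is equivalent to'' before each displayed equation is an invocation of the $K$-adjunction, not of any embedding into $\Hom_k$). Replacing your injectivity appeal with this adjunction argument closes the gap with no extra work.
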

\begin{note}\label{2.2.4}
The internal Hom of the symmetric monoidal category $K\cmod$ will be denoted by $F(-,-)$.
\end{note}
\begin{proof}[Proof of Proposition \ref{2.2.3}]
It is easy to see that the tensor product $V\otimes V'$ of (1) is a module over both $K$ and $\fg$. Apply $-\otimes V'$ and $-\otimes V$ to the $K$-equivariant maps
\[\fg\otimes V\to V\]
\[\fg\otimes V'\to V'\]
respectively. Since $K\cmod$ is symmetric monoidal, we have two $K$-equivariant maps from $\fg\otimes V\otimes V'$ to $V\otimes V'$. Since their sum coincides with $\pi_{V\otimes V'}$, the latter is also $K$-equivariant. The actions of $\fk$ coincide by the Leibnitz rule of the differential representations for the tensor product. To see that this defines a symmetric monoidal category, it will suffice to show that the constraints of associativity and symmetry of $K\cmod$ respect the $\fg$-actions. This is obvious.

Recall that we have a $K$-equivariant $k$-homomorphism $\fg\to U(\fg)\otimes U(\fg);x\mapsto x\otimes 1-1\otimes x$ (regard $\fg\cong \fg\otimes k\cong k\otimes \fg$). For $(\fg,K)$-modules $V,V'$, define $\pi=\pi_{F(V,V')}:\fg\otimes F(V,V')\to F(V,V')$ by the following composite arrows:
\[\begin{split}
\fg\otimes F(V,V')\otimes V
&\to U(\fg)\otimes U(\fg)\otimes F(V,V')\otimes V\\
&\cong U(\fg)\otimes F(V,V')\otimes U(\fg)\otimes V\\
&\overset{\pi_V}{\to}U(\fg)\otimes F(V,V')\otimes V\\
&\to U(\fg)\otimes V'\\
&\overset{\pi_{V'}}{\to} V'.
\end{split}\]
This is $K$-equivariant by definition. To see that this is a $\fg$-action, we see that the two maps
\[\fg\otimes\fg\otimes F(V,V')\rightrightarrows F(V,V')\]
coincide. If we write $f\otimes v\mapsto f(v)$ for the counit $F(V,V')\otimes V\to V'$, it is equivalent to
\[(\pi(\left[x,y\right])f)(v)=(\pi(x)(\pi(y)f))(v)-(\pi(y)(\pi(x)f))(v)\]
for $x,y\in\fg$ and $f\in F(V,V')$. Observe that $\pi_{F(V,V')}$ is characterized by the equality \[(\pi(x)f)(v)=\pi_{V'}(x)f(v)-f(\pi_V(x)v)\]
by definition, and thus
\[\begin{split}
(\pi(x)(\pi(y)f))(v)
&=\pi(x)(\pi(y)f)(v)-(\pi(y)f)(\pi(x)v)\\
&=\pi(x)\pi(y)f(v)-\pi(x)f(\pi(y)v)-\pi(y)f(\pi(x)v)+f(\pi(y)\pi(x)v).
\end{split}\]
The assertion now follows from the formal computation
\[\begin{split}
(\pi(\left[x,y\right])f)(v)
&=\pi(\left[x,y\right])f(v)-f(\pi(\left[x,y\right])v)\\
&=\pi(x)\pi(y)f(v)-\pi(y)\pi(x)f(v)-f(\pi(x)\pi(y)v)+f(\pi(y)\pi(x)v)\\
&=(\pi(x)(\pi(y)f))(v)-(\pi(y)(\pi(x)f))(v).
\end{split}\]

We next show that $F(V,V')$ is a $(\fg,K)$-module. Since $V,V'$ are $(\fg,K)$-modules, the action $\pi$ can be rewritten as
\[(\pi(\xi)f)(v)=\pi(\xi)f(v)-f(\pi(\xi)v)=d\nu(\xi)f(v)-f(d\nu(\xi)v).\]
for $\xi\in\fk$. Since the counit $F(V,V')\otimes V\to V'$ is $\fk$-equivariant with respect to the differential representations, we have
\[(\pi(\xi)f)(v)=d\nu(\xi)f(v)-f(d\nu(\xi)v)=(d\nu(\xi) f)(v).\]

Finally, we prove that $F(V,V')$ exhibits the closed structure. Let $V''$ be another $(\fg,K)$-module, and $\varphi:V''\to F(V,V')$ be a $K$-module homomorphism. It will suffice to show that $\varphi$ is $\fg$-equivariant if and only if the composition $\Phi:V''\otimes V\to F(V,V')\otimes V\to V'$ is $\fg$-equivariant. Observe that the following conditions are equivalent:
\begin{enumerate}
\renewcommand{\labelenumi}{(\alph{enumi})}
\item $\varphi$ is $\fg$-equivariant;
\item The diagram 
\[\xymatrix{
\fg\otimes V''\ar[r]^{id_\fg\otimes\varphi}\ar[d]_{\pi_{V''}}
&\fg\otimes F(V,V')\ar[d]^{\pi_{F(V,V')}}\\
V''\ar[r]&F(V,V')
}\]
commutes;
\item The diagram 
\[\xymatrix{
\fg\otimes V''\otimes V\ar[r]^\varphi\ar[dd]_{\pi_{V''}}
&\fg\otimes F(V,V')\otimes V\ar[rd]^{\pi_{F(V,V')}}\ar[dd]\\
&&F(V,V')\otimes V\ar[ld]\\
V''\otimes V\ar[r]_\Phi&V'
}\]
commutes.
\end{enumerate}
One can also rewrite (c) as
\[\Phi(\pi(x)v''\otimes v)=\pi(x)\varphi(v'')(v)-\varphi(v'')(\pi(x)v)
=\pi(x)\Phi(v''\otimes v)-\Phi(v''\otimes \pi(x)v)\]
which is equivalent to saying that $\Phi$ is $\fg$-equivariant. This completes the proof.
\end{proof}
\begin{defn}\label{2.2.5}
Let $(\fg,K)$ be a pair over a commutative ring $k$, and $V$ be a $(\fg,K)$-module. Then set $V^c=F(V,k)$.
\end{defn}
Suppose that we are given a map $(\fq,K)\to(\fg,K)$ of pairs which is the identity on $K$.
\begin{cor}\label{2.2.6}
For a $(\fq,K)$-module $W$ and a $(\fg,K)$-module $V$, there is a natural isomorphism
$\ind^\fg_\fq W\otimes V\cong\ind^\fg_\fq(W\otimes\cF^{\fq,K}_{\fg,K} (V))$. 
\end{cor}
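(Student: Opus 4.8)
The plan is to obtain the isomorphism by a Yoneda argument: test both sides against an arbitrary $(\fg,K)$-module $U$, and transport everything across the adjunction $\ind^\fg_\fq\dashv\cF^{\fq,K}_{\fg,K}$ together with the closed symmetric monoidal structures of Proposition \ref{2.2.3}. (Here $\ind^\fg_\fq$ is the left adjoint of the forgetful functor, as in Notation \ref{1.2.8}.)

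The one ingredient that is not purely formal is a compatibility of internal Homs under restriction of scalars: for a $(\fg,K)$-module $V$ and a $(\fg,K)$-module $U$, the $(\fq,K)$-module $\cF^{\fq,K}_{\fg,K}F(V,U)$ obtained by restricting the $\fg$-action on $F(V,U)$ along $\fq\to\fg$ should coincide with the internal Hom of $\cF^{\fq,K}_{\fg,K}V$ and $\cF^{\fq,K}_{\fg,K}U$ computed in $(\fq,K)\cmod$. By Notation \ref{2.2.4} and the compatibility clause of Proposition \ref{2.2.3}(2), both of these carry the same underlying $K$-module $F(V,U)$ and the same evaluation map $F(V,U)\otimes V\to U$, so only the two $\fq$-module structures need to be matched. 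But the $\fg$-action constructed in the proof of Proposition \ref{2.2.3} is characterized through the evaluation map by $(\pi(x)f)(v)=\pi_U(x)f(v)-f(\pi_V(x)v)$ for $x\in\fg$, and restricting $x$ to the image of $\fq$ is exactly the formula defining the $\fq$-action on the internal Hom of $\cF^{\fq,K}_{\fg,K}V$ and $\cF^{\fq,K}_{\fg,K}U$; hence the two structures agree.

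Granting this, for every $(\fg,K)$-module $U$ one writes the chain of natural isomorphisms
\begin{align*}
\Hom_{\fg,K}(\ind^\fg_\fq W\otimes V,\,U)
&\cong \Hom_{\fg,K}(\ind^\fg_\fq W,\,F(V,U))\\
&\cong \Hom_{\fq,K}(W,\,\cF^{\fq,K}_{\fg,K}F(V,U))\\
&\cong \Hom_{\fq,K}(W,\,F(\cF^{\fq,K}_{\fg,K}V,\cF^{\fq,K}_{\fg,K}U))\\
&\cong \Hom_{\fq,K}(W\otimes\cF^{\fq,K}_{\fg,K}V,\,\cF^{\fq,K}_{\fg,K}U)\\
&\cong \Hom_{\fg,K}(\ind^\fg_\fq(W\otimes\cF^{\fq,K}_{\fg,K}V),\,U),
\end{align*}
where the first and fourth steps are the closed-structure adjunctions of $(\fg,K)\cmod$ and $(\fq,K)\cmod$, the second and fifth are the adjunction $\ind^\fg_\fq\dashv\cF^{\fq,K}_{\fg,K}$, and the third is the identification of the previous paragraph. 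Each arrow is natural in $U$, so applying the Yoneda lemma in $((\fg,K)\cmod)^{op}$ produces the asserted isomorphism $\ind^\fg_\fq W\otimes V\cong\ind^\fg_\fq(W\otimes\cF^{\fq,K}_{\fg,K}V)$; and since the whole chain is equally natural in $W$ and in $V$, so is the resulting isomorphism. There is thus no genuine obstacle beyond the closed-monoidal bookkeeping: the argument is formal once Proposition \ref{2.2.3} is in place, and the only step that repays attention is the internal-Hom compatibility under restriction along $\fq\to\fg$, which is read directly off the explicit evaluation-map formula.
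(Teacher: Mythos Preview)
Your proof is correct and follows essentially the same route as the paper: the same chain of adjunction/closed-monoidal isomorphisms followed by Yoneda. The paper's version is terser, silently identifying $\cF^{\fq,K}_{\fg,K}F(V,X)$ with the internal Hom in $(\fq,K)\cmod$, whereas you make this compatibility explicit; that extra paragraph is a welcome clarification rather than a different argument.
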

\begin{proof}
For a $(\fg,K)$-module $X$, we have a natural bijection
\[\begin{split}
\Hom_{\fg,K}(\ind^\fg_\fq W\otimes V,X)
&\cong\Hom_{\fg,K}(\ind^\fg_\fq W,F(V,X))\\
&\cong\Hom_{\fq,K}(W,\cF^{\fq,K}_{\fg,K}(F(V,X)))\\
&\cong\Hom_{\fq,K}(W\otimes \cF^{\fq,K}_{\fg,K}(V),\cF^{\fq,K}_{\fg,K}(X))\\
&\cong\Hom_{\fg,K}(\ind^\fg_\fq(W\otimes \cF^{\fq,K}_{\fg,K} (V)),X).
\end{split}\]
The assertion now follows from the Yoneda lemma.
\end{proof}
\begin{cor}[The easy duality]\label{2.2.9}
There is a natural isomorphism $\ind^\fg_\fq(W)^c\cong\pro^\fg_\fq(W^c)$ for a $(\fq,K)$-module $W$.
\end{cor}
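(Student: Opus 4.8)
The statement to prove is Corollary~\ref{2.2.9}: a natural isomorphism $\ind^\fg_\fq(W)^c\cong\pro^\fg_\fq(W^c)$ for a $(\fq,K)$-module $W$, where $(\fq,K)\to(\fg,K)$ is a map of pairs that is the identity on $K$.

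\begin{proof}
The plan is to verify that both sides represent the same functor on $(\fg,K)\cmod$ and then invoke the Yoneda lemma, exactly in the style of the proof of Corollary~\ref{2.2.6}. First I would recall that by definition $(-)^c = F(-,k)$ (Definition~\ref{2.2.5}), that $F(-,-)$ is the internal Hom of the closed symmetric monoidal category $(\fg,K)\cmod$ (Proposition~\ref{2.2.3}), and that $\pro^\fg_\fq$ is by definition the right adjoint to the forgetful functor $\cF^{\fq,K}_{\fg,K}$ (Notation~\ref{1.2.8}), while $\ind^\fg_\fq$ is its left adjoint. Fix a $(\fg,K)$-module $X$. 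The chain of natural bijections I would write down is
\[
\begin{split}
\Hom_{\fg,K}(X,\ind^\fg_\fq(W)^c)
&\cong\Hom_{\fg,K}(X,F(\ind^\fg_\fq W,k))\\
&\cong\Hom_{\fg,K}(X\otimes\ind^\fg_\fq W,k)\\
&\cong\Hom_{\fg,K}(\ind^\fg_\fq W\otimes X,k)\\
&\cong\Hom_{\fg,K}(\ind^\fg_\fq(W\otimes\cF^{\fq,K}_{\fg,K}X),k)\\
&\cong\Hom_{\fq,K}(W\otimes\cF^{\fq,K}_{\fg,K}X,k)\\
&\cong\Hom_{\fq,K}(\cF^{\fq,K}_{\fg,K}X,F(W,k))\\
&\cong\Hom_{\fq,K}(\cF^{\fq,K}_{\fg,K}X,W^c)\\
&\cong\Hom_{\fg,K}(X,\pro^\fg_\fq(W^c)).
\end{split}
\]
Here the first step is the definition of $(-)^c$; the second and sixth are the closed-structure adjunction of Proposition~\ref{2.2.3}(2) over $(\fg,K)$ and over $(\fq,K)$ respectively (using that this closed structure is compatible on the nose with the forgetful functor, so $F$ computed in $(\fq,K)\cmod$ agrees with $F$ computed in $(\fg,K)\cmod$ after restriction); the third is the symmetry constraint; the fourth is Corollary~\ref{2.2.6}; the fifth is the $(\ind^\fg_\fq,\cF^{\fq,K}_{\fg,K})$-adjunction; the seventh is the definition of $W^c$; and the last is the $(\cF^{\fq,K}_{\fg,K},\pro^\fg_\fq)$-adjunction. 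All arrows are natural in $X$, so the composite is an isomorphism of functors $(\fg,K)\cmod^{op}\to\Set$, and Yoneda gives the claimed isomorphism $\ind^\fg_\fq(W)^c\cong\pro^\fg_\fq(W^c)$, which is moreover natural in $W$ since every step above is.

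The only point that requires a little care — and what I would flag as the main obstacle — is the compatibility of the internal Hom $F$ along the forgetful functor $\cF^{\fq,K}_{\fg,K}$: I need that $F(W,k)$ formed in $(\fq,K)\cmod$ coincides with $\cF^{\fq,K}_{\fg,K}$ applied to $F(W,k)$ formed in $(\fg,K)\cmod$, so that the symbol $W^c$ is unambiguous and the sixth bijection above is legitimate. This follows from Proposition~\ref{2.2.3}(2), which says the closed structure on $(\fg,K)\cmod$ is compatible with that of $K\cmod$, together with the observation that $F$ on $(\fg,K)\cmod$ is built from $F$ on $K\cmod$ with the $\fg$-action of Proposition~\ref{2.2.3} — restricting that $\fg$-action to $\fq$ is exactly the internal Hom of $(\fq,K)\cmod$. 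Everything else is a formal manipulation of adjunctions, so no genuine difficulty remains.
\end{proof}
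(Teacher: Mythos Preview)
Your proof is correct and is essentially identical to the paper's own argument: both establish a chain of natural bijections $\Hom_{\fg,K}(X,\ind^\fg_\fq(W)^c)\cong\Hom_{\fg,K}(X,\pro^\fg_\fq(W^c))$ using the closed monoidal structure, Corollary~\ref{2.2.6}, and the $(\ind,\cF)$ and $(\cF,\pro)$ adjunctions, then conclude by Yoneda. Your version is simply more explicit (writing out the symmetry step and the definition $(-)^c=F(-,k)$, and flagging the compatibility of $F$ along $\cF^{\fq,K}_{\fg,K}$), but the content is the same.
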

\begin{proof}
For a $(\fg,K)$-module $V$, we have
\[\begin{split}
\Hom_{\fg,K}(V,\ind^\fg_\fq(W)^c)
&\cong\Hom_{\fg,K}(V\otimes\ind^\fg_\fq W,k)\\
&\cong\Hom_{\fg,K}(\ind^\fg_\fq(V\otimes W),k)\\
&\cong\Hom_{\fq,K}(V\otimes W,k)\\
&\cong\Hom_{\fq,K}(V,W^c)\\
&\cong\Hom_{\fg,K}(V,\pro^\fg_\fk(W^c)).
\end{split}\]
The assertion now follows from the Yoneda Lemma.
\end{proof}
\section{The Flat Base Change Theorems}
\subsection{The main statements}
We start with the definition of the base change functor.
Let $k\to k'$ be a homomorphism of commutative rings. For an algebra $\cA$ over $k$, an $\cA\otimes k'$-module $W$ is an $\cA$-module for
\[\cA\otimes_k W\cong(\cA\otimes k')\otimes_{k'}W\to W.\]
Conversely, if we are given an $\cA$-module $V$, $V\otimes k'$ is an $\cA\otimes k'$-module for
\[(\cA\otimes k')\otimes_{k'} (V\otimes k')\cong(\cA\otimes V)\otimes k'\to V\otimes k'.\]
These form an adjunction
\[\Hom_\cA(V,W)\cong\Hom_{\cA\otimes k'}(V\otimes k',W).\]
Similarly, if we are given a flat affine group scheme $K$ over $k$, we have the base change adjunction (see the beginning of Section 2.1). In terms of $k$-functors, they are described as follows:
For $k'$ a $k$-algebra, we have
\[\begin{split}
K(R)&\to(K\otimes k')(R\otimes k')\\
&\to\Aut_{R\otimes k'}(W\otimes_{k'}(R\otimes k'))\\
&\cong\Aut_{R\otimes k'}(W\otimes R)\\
&\to\Aut_R(W\otimes R)
\end{split}\]
\[\begin{split}
(K\otimes k')(R)&=\Hom_{k'}(k\left[K\right]\otimes k',R)\\
&\cong\Hom_k(k\left[K\right],R)\\
&\to\Aut_R(V\otimes R)\\
&\cong\Aut_R((V\otimes k')\otimes_{k'}R).
\end{split}\]
Hence the differential representations are compatible with the restrictions and the flat base changes. That is, let $k'$ be a flat $k$-algebra.
\begin{itemize}
\item If we are given a $K\otimes k'$-module $W$, the differential representation on the restriction of $W$ to $K$ coincides with
\[\fk\otimes W\cong(\fk\otimes k')\otimes_{k'}W\to W;\]
\item For a $K$-module $V$, the differential representation of $K\otimes k'$ on the $K\otimes k'$-module $V\otimes k'$ is induced from
\[\fk\otimes V\to V\to V\otimes k'\]
by the universality of the base change.
\end{itemize}
We now obtain the following consequence from these functorial constructions:
\begin{prop}\label{3.1.1}
Let $(\fg,K)$ be a pair over $k$, and $k'$ be a flat $k$-algebra. Then we have an adjunction
\[-\otimes_k k':(\fg,K)\cmod\leftrightarrows(\fg\otimes k',K\otimes k')\cmod:\Res^k_{k'}.\]
\end{prop}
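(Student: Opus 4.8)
The plan is to assemble the adjunction from the pieces already developed in the excerpt. First I would check that $(\fg\otimes k',K\otimes k')$ is actually a pair over $k'$: the group scheme $K\otimes k'$ is flat over $k'$ since flatness is stable under base change, and it satisfies Condition \ref{1.1.5} because the formation of $I_e/I_e^2$ commutes with the flat base change $k\to k'$ (the counit ideal base-changes to the counit ideal, and $(I_e/I_e^2)\otimes k'$ computes the corresponding module for $K\otimes k'$), so that $\fk\otimes k'$ is the Lie algebra of $K\otimes k'$ by the compatibility of differentials with flat base change noted just before the statement; moreover Condition \ref{1.1.3} passes to $(I_e/I_e^2)\otimes k'$ and $\fk\otimes k'$ by the same argument as in Example \ref{1.1.4}/\ref{1.1.6} combined with flatness. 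The structure maps $\phi\otimes k'$ and $\psi\otimes k'$ then give $\fg\otimes k'$ the structure of a $k'$-algebra (resp.\ Lie algebra) with $K\otimes k'$-action and a $K\otimes k'$-equivariant Lie algebra map $\fk\otimes k'\to\fg\otimes k'$, and the compatibility $d\phi(\xi)=[\psi(\xi),-]$ base-changes because $d(\phi\otimes k')$ is induced from $d\phi$ as recorded above.

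Next I would construct the two functors. For $-\otimes_k k'$: given a $(\fg,K)$-module $V$, it carries a $K$-action, hence $V\otimes k'$ carries a $K\otimes k'$-action by Proposition \ref{2.1.1}(2) (under the identification $K\cmod\cong H\comod$ of Lemma \ref{2.2.1}); it carries a $\fg\otimes k'$-module structure by the algebra base-change recipe at the start of \S3.1 applied to $\cA=U(\fg)$, noting $U(\fg)\otimes k'\cong U(\fg\otimes k')$; and the two induced $\fk\otimes k'$-actions on $V\otimes k'$ agree because each is obtained by applying $-\otimes k'$ to the corresponding $\fk$-action on $V$, which agree by hypothesis. For $\Res^k_{k'}$: a $(\fg\otimes k',K\otimes k')$-module $W$ restricts to a $K$-module (Proposition \ref{2.1.1}(2)) and to a $\fg$-module (the algebra restriction), and again the two $\fk$-actions coincide because restriction of scalars is compatible with the differential, as recorded in the itemized list before the statement. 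Both assignments are clearly functorial.

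Finally I would verify the adjunction. The underlying adjunctions $\Hom_\cA(V,W)\cong\Hom_{\cA\otimes k'}(V\otimes k',W)$ for algebras and $\Hom_K(V,W)\cong\Hom_{K\otimes k'}(V\otimes k',W)$ for the group scheme are already in hand; a $(\fg,K)$-homomorphism is precisely a $k$-linear map that is simultaneously a $K$-homomorphism and a $\fg$-homomorphism, so one intersects the two bijections: a map $V\otimes k'\to W$ that is $\fg\otimes k'$-linear is in particular $\fg$-linear after restriction, and similarly for $K\otimes k'$, and conversely the unit/counit of each component adjunction are visibly compatible, so the bijection $\Hom_{\fg,K}(V,\Res W)\cong\Hom_{\fg\otimes k',K\otimes k'}(V\otimes k',W)$ holds and is natural in both variables. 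I expect no serious obstacle here: the only mildly delicate point is the bookkeeping in the first paragraph showing $(\fg\otimes k',K\otimes k')$ is a pair (in particular that Condition \ref{1.1.5} survives base change), and everything else is a routine combination of Proposition \ref{2.1.1}, Lemma \ref{2.2.1}, and the algebra base-change adjunction, so I would keep the proof short and cite these.
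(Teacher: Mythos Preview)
Your proposal is correct and follows exactly the route the paper takes: the paper records no separate proof of Proposition~\ref{3.1.1} but presents it as an immediate consequence of the functorial constructions laid out just before (the algebra base-change adjunction, Proposition~\ref{2.1.1}(2) for comodules via Lemma~\ref{2.2.1}, and the displayed compatibility of differential representations with $-\otimes k'$ and $\Res^k_{k'}$), and you have simply written those steps out in full. One small point: your justification that Condition~\ref{1.1.5} survives flat base change by invoking Examples~\ref{1.1.4}/\ref{1.1.6} is not quite the right citation---those examples give sufficient conditions over a fixed base, not stability under base change---but the claim itself is true (use that a flat $k'$-module is flat over $k$, together with the Hom--tensor adjunction and Condition~\ref{1.1.3} for $V$ over $k$), so this is only a matter of wording.
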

\begin{rem}\label{3.1.2}
If $K$ is smooth over $k$, the base change makes sense for all $k'$ since the smoothness is stable under arbitrary base changes.
\end{rem}
\begin{rem}\label{3.1.3}
For a weak pair $(\fg,K)$ in the sense of \cite{MR4007195}, the base change of weak $(\fg,K)$-modules always makes sense even if $K$ does not satisfy Condition \ref{1.1.6}.
\end{rem}
\begin{cor}\label{3.1.4}
Let $(\fq,M)\to(\fg,K)$ be a map of pairs over $k$, $k'$ be a flat $k$-algebra, and $V$ be a $(\fq\otimes k',M\otimes k')$-module. Then there is an isomorphism
\[I^{\fg,K}_{\fq,M}(\Res^k_{k'}(V))\cong \Res^k_{k'}(I^{\fg\otimes k',K\otimes k'}_{\fq\otimes k',M\otimes k'}(V)).\]
In particular, if $W$ is a $(\fq,M)$-module,
\[I^{\fg,K}_{\fq,M}(\Res^k_{k'}(W\otimes k'))\cong \Res^k_{k'}(I^{\fg\otimes k',K\otimes k'}_{\fq\otimes k',M\otimes k'}(W\otimes k')).\]
\end{cor}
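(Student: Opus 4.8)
The plan is to deduce the isomorphism purely formally from adjunctions, with no explicit model of $I^{\fg,K}_{\fq,M}$. The ingredients are the adjunctions $\cF^{\fq,M}_{\fg,K}\dashv I^{\fg,K}_{\fq,M}$ and $\cF^{\fq\otimes k',M\otimes k'}_{\fg\otimes k',K\otimes k'}\dashv I^{\fg\otimes k',K\otimes k'}_{\fq\otimes k',M\otimes k'}$ (Lemma \ref{1.1.8}) together with the base-change adjunctions $-\otimes_k k'\dashv\Res^k_{k'}$ for the two pairs $(\fq,M)$ and $(\fg,K)$ (cf.\ Proposition \ref{2.1.1} and Proposition \ref{3.1.1}).

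First I would record that the square of left adjoints obtained by placing the forgetful functors $\cF^{\fq,M}_{\fg,K}$ and $\cF^{\fq\otimes k',M\otimes k'}_{\fg\otimes k',K\otimes k'}$ horizontally and the extension functors $-\otimes_k k'$ vertically commutes up to a canonical natural isomorphism
\[\cF^{\fq\otimes k',M\otimes k'}_{\fg\otimes k',K\otimes k'}(X\otimes_k k')\cong\bigl(\cF^{\fq,M}_{\fg,K}X\bigr)\otimes_k k',\]
natural in the $(\fg,K)$-module $X$. This is immediate from the constructions: both sides are the $k'$-module $X\otimes_k k'$ carrying the $\fq\otimes k'$-action induced from the $\fg$-action on $X$ through $\fq\otimes k'\to\fg\otimes k'$ and the $M\otimes k'$-action induced from the $K$-action on $X$ through $M\otimes k'\to K\otimes k'$. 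One only checks that the axioms of a module over a pair, notably the coincidence of the two $\fk\otimes k'$-actions and the $K\otimes k'$-equivariance of the $\fg\otimes k'$-action, are preserved by both composites, which follows from the corresponding facts for $X$.

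The conclusion is then a matter of uniqueness of adjoints. The composite $(-\otimes_k k')\circ\cF^{\fq,M}_{\fg,K}$ has right adjoint $I^{\fg,K}_{\fq,M}\circ\Res^k_{k'}$, while $\cF^{\fq\otimes k',M\otimes k'}_{\fg\otimes k',K\otimes k'}\circ(-\otimes_k k')$ has right adjoint $\Res^k_{k'}\circ I^{\fg\otimes k',K\otimes k'}_{\fq\otimes k',M\otimes k'}$. Since the two composites are naturally isomorphic and right adjoints are unique up to natural isomorphism,
\[I^{\fg,K}_{\fq,M}\circ\Res^k_{k'}\cong\Res^k_{k'}\circ I^{\fg\otimes k',K\otimes k'}_{\fq\otimes k',M\otimes k'};\]
evaluating at $V$ yields the first isomorphism and evaluating at $V=W\otimes_k k'$ yields the ``in particular'' clause. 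Equivalently one can unwind this into a Yoneda argument, chaining the three adjunction isomorphisms through $\Hom_{\fg,K}(X,-)$ and inserting the commuting square in the middle; this makes the isomorphism explicit, at the price of verifying naturality in $X$ directly.

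I do not expect a genuine obstacle. The one step that wants care is the first one, namely checking that the square commutes \emph{as a diagram of functors between module categories over pairs}, keeping track of the Lie-algebra-action and equivariance conditions rather than only of the underlying $k$-modules; the rest is the standard ``right adjoint of a composite'' bookkeeping. One should also ensure that for the given $k'$ all objects in sight ($(\fg\otimes k',K\otimes k')$ and $(\fq\otimes k',M\otimes k')$ as pairs, hence the functor $I^{\fg\otimes k',K\otimes k'}_{\fq\otimes k',M\otimes k'}$) are actually defined, but this is already presupposed by the statement.
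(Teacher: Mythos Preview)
Your proposal is correct and follows essentially the same approach as the paper: the paper also records the commutation $(-\otimes k')\circ\cF^{\fq,M}_{\fg,K}\cong\cF^{\fq\otimes k',M\otimes k'}_{\fg\otimes k',K\otimes k'}\circ(-\otimes k')$ and then passes to right adjoints, spelling this out as exactly the four-line Yoneda chain you describe at the end. The only cosmetic difference is that the paper writes the Yoneda argument directly rather than first invoking uniqueness of adjoints abstractly.
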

\begin{proof}
Pass to the right adjoints of $(-\otimes k')\circ \cF^{\fq,M}_{\fg,K}\cong\cF^{\fq\otimes k',M\otimes k'}_{\fg\otimes k',K\otimes k'}\circ (-\otimes k')$: For any $(\fg,K)$-module $X$, we have
\[\begin{split}
\Hom_{\fg,K}(X,I^{\fg,K}_{\fq,M}(\Res^k_{k'}(V)))
&\cong\Hom_{\fq,M}(X,\Res^k_{k'}(V))\\
&\cong\Hom_{\fq\otimes k',M\otimes k'}(X\otimes k',V)\\
&\cong\Hom_{\fg\otimes k',K\otimes k'}(X\otimes k',I^{\fg\otimes k',K\otimes k'}_{\fq\otimes k',M\otimes k'}(V))\\
&\cong\Hom_{\fg,K}(X,\Res^k_{k'}(I^{\fg\otimes k',K\otimes k'}_{\fq\otimes k',M\otimes k'}(V))).
\end{split}\]
The assertion now follows from the Yoneda lemma.
\end{proof}
\begin{cons}[The comparison natural transform]\label{3.1.5}
Let $(\fq,M)\to(\fg,K)$ be a map of pairs over $k$, and $k'$ be a flat $k$-algebra. Then applying $I^{\fg,K}_{\fq,M}$ to the unit of Proposition \ref{3.1.1}, we obtain a natural transform
\[I^{\fg,K}_{\fq,M}(-)\to I^{\fg,K}_{\fq,M}(\Res^k_{k'}(-\otimes k'))
\cong \Res^k_{k'}(I^{\fg\otimes k',K\otimes k'}_{\fq\otimes k',M\otimes k'}(-\otimes k')).\]
Pass to the adjunction of Proposition \ref{3.1.1} to get
\[I^{\fg,K}_{\fq,M}(-)\otimes k'\to I^{\fg\otimes k',K\otimes k'}_{\fq\otimes k',M\otimes k'}(-\otimes k')\]
which will be referred to as $\iota=\iota_{k,k'}$.
\end{cons}
In the rest of this section, assume $k$ to be Noetherian. We will prove in Section 3.2 below:
\begin{thm}[Flat base change theorem]\label{3.1.6}
Let $k'$ be a flat $k$-algebra, and $(\fg,K)$ be a pair over $k$ with $\fg$ finitely generated over $k$. Then for any finitely generated $(\fg,K)$-module $X$, we have an isomorphism
\[\Hom_{\fg,K}(X,-)\otimes k'\cong\Hom_{\fg\otimes k',K\otimes k'}(X\otimes k',-\otimes
 k').\]
\end{thm}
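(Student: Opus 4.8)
The plan is to introduce the evident comparison map, prove it is an isomorphism in the ``group case'' $X=\ind^{\fg}_{\fk}Q$ with $Q$ finitely generated over $K$, and then reduce a general finitely generated $X$ to that case via a presentation. First I would record the comparison map: for a $(\fg,K)$-module $Y$, composing with the unit $Y\to\Res^{k}_{k'}(Y\otimes k')$ of Proposition \ref{3.1.1} and applying that adjunction gives a $k$-linear map $\Hom_{\fg,K}(X,Y)\to\Hom_{\fg\otimes k',K\otimes k'}(X\otimes k',Y\otimes k')$, which extends by $k'$-linearity of its target to
\[\iota_{X,Y}\colon\Hom_{\fg,K}(X,Y)\otimes k'\to\Hom_{\fg\otimes k',K\otimes k'}(X\otimes k',Y\otimes k'),\]
natural in $X$ and in $Y$. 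The theorem amounts to the bijectivity of $\iota_{X,Y}$ for all finitely generated $X$ and all $Y$.

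For the group case, write $H=k[K]$ and $\cF$ for the forgetful functor from $(\fg,K)$-modules to $K$-modules. For $K$-modules $Q$ and $W$, the module $\Hom_{K}(Q,W)$ is the kernel of the two $k$-linear maps $\Hom_{k}(Q,W)\rightrightarrows\Hom_{k}(Q,W\otimes H)$ coming from the coactions of $W$ and of $Q$. When $Q$ is finitely generated over $K$ it is a finitely presented $k$-module by Corollary \ref{2.1.14}, so the exact functor $-\otimes k'$ commutes with $\Hom_{k}(Q,-)$ and with the above kernel; using $(W\otimes H)\otimes k'\cong(W\otimes k')\otimes_{k'}(H\otimes k')$ this identifies $\Hom_{K}(Q,W)\otimes k'$ with $\Hom_{K\otimes k'}(Q\otimes k',W\otimes k')$ compatibly with $\iota$, which is the analogue of \cite{Jant}~I.2.10 that we need. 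I would then combine this with the base change formula $(\ind^{\fg}_{\fk}Q)\otimes k'\cong\ind^{\fg\otimes k',K\otimes k'}_{\fk\otimes k',K\otimes k'}(Q\otimes k')$ — obtained by passing to left adjoints in the identity between $\cF$ followed by base change and base change followed by the corresponding forgetful functor over $k'$ — and with $\Hom_{\fg,K}(\ind^{\fg}_{\fk}Q,Y)\cong\Hom_{K}(Q,\cF Y)$ and $\cF(Y\otimes k')\cong(\cF Y)\otimes k'$, to conclude that $\iota_{\ind^{\fg}_{\fk}Q,Y}$ is bijective for every finitely generated $K$-module $Q$.

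For a general finitely generated $X$ I would resolve it by such induced modules, and this is where the hypothesis on $\fg$ enters. Since $\fg$ is finitely generated over the Noetherian ring $k$, the symmetric algebra $\mathrm{Sym}_{k}(\fg)$ is a finitely generated $k$-algebra, hence Noetherian; as it surjects onto the associated graded ring of $U(\fg)$ for the standard filtration, $U(\fg)$ is left and right Noetherian. Pick a finite generating set of the $(\fg,K)$-module $X$ and let $Q_{0}\subseteq X$ be the finitely generated $K$-submodule it spans (Corollary \ref{2.1.14}); then the adjoint morphism $\ind^{\fg}_{\fk}Q_{0}\to X$ is surjective, and $\ind^{\fg}_{\fk}Q_{0}$ is finitely generated over $U(\fg)$, being generated by the image of $Q_{0}$, hence Noetherian over $U(\fg)$, so its kernel is again a finitely generated $(\fg,K)$-module and admits a surjection from some $\ind^{\fg}_{\fk}Q_{1}$ with $Q_{1}$ finitely generated over $K$. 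Applying $\Hom_{\fg,K}(-,Y)$ to the presentation $\ind^{\fg}_{\fk}Q_{1}\to\ind^{\fg}_{\fk}Q_{0}\to X\to 0$ and then $-\otimes k'$ (flat) yields a left-exact sequence; applying $\Hom_{\fg\otimes k',K\otimes k'}(-\otimes k',Y\otimes k')$ to $\ind(Q_{1}\otimes k')\to\ind(Q_{0}\otimes k')\to X\otimes k'\to 0$ — which is exact by right-exactness of $-\otimes k'$ and the base change formula for $\ind$ — yields another. The comparison maps form a morphism between these two left-exact sequences which is an isomorphism on the two right-hand terms by the group case, so $\iota_{X,Y}$, being the induced map on kernels, is an isomorphism; naturality in $Y$ is inherited from that of $\iota$.

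The step I expect to be most delicate is the bookkeeping around the induction functor: checking carefully that $\ind^{\fg}_{\fk}$ commutes with the base change (one must identify the units and counits), that $\ind^{\fg}_{\fk}Q$ is genuinely finitely generated over $U(\fg)$ in this generality, and that the single natural transformation $\iota$ really occupies all three relevant diagrams so that passing to kernels is legitimate. The Noetherianity of $U(\fg)$ is the one honestly new ingredient forced by leaving the field setting of \cite{J2}.
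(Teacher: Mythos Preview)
Your proof is correct and follows essentially the same strategy as the paper: reduce to the case $X=\ind^{\fg}_{\fk}Q$ with $Q$ a finitely generated $K$-module, handle that case via the $K$-module base change (the paper's Corollary \ref{3.2.10}), and extend to all finitely generated $X$ using Noetherianity of $U(\fg)$ (the paper's Lemma \ref{3.2.3} and Proposition \ref{3.2.4}). The only difference is in packaging: the paper phrases the reduction abstractly, invoking that $(\fg,K)\cmod$ is compactly generated with compact generators $\{\ind^{\fg}_{\fk}Q\}$ and that the class $S$ of objects satisfying the base change is closed under finite colimits, so Lemma \ref{2.1.13} gives all compact objects at once; you instead construct an explicit two-term presentation $\ind^{\fg}_{\fk}Q_{1}\to\ind^{\fg}_{\fk}Q_{0}\to X\to 0$ and apply a five-lemma/kernel argument. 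Your version is more self-contained and avoids appealing to the general machinery of compactly generated categories, at the cost of being a little longer; the paper's version makes the stability under finite colimits do the work implicitly. The bookkeeping you flag (compatibility of $\ind^{\fg}_{\fk}$ with base change, finite generation of $\ind^{\fg}_{\fk}Q$ over $U(\fg)$, naturality of $\iota$) is routine and is likewise taken for granted in the paper.
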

\begin{thm}\label{3.1.7}
Let $k\to k'$ be a flat ring homomorphism, and $(\fq,M)\to(\fg,K)$ be a map of pairs. Suppose that the following conditions are satisfied:
\begin{enumerate}
\renewcommand{\labelenumi}{(\roman{enumi})}
\item $\fk\oplus\fq\to\fg$ is surjective.
\item $\fq$ and $\fg$ are finitely generated as $k$-modules.
\end{enumerate}
Then $\iota:(I^{\fg,K}_{\fq,M}-)\otimes_k k'\to I^{\fg\otimes_k k',K\otimes_k k'}_{\fq\otimes_k k',M\otimes_k k'}(-\otimes_k k')$ (Construction \ref{3.1.5}) is an isomorphism.
\end{thm}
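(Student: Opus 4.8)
The plan is to show that the natural transformation $\iota=\iota_{k,k'}$ of Construction~\ref{3.1.5} is an isomorphism by testing it against a suitable family of generators of $(\fg\otimes k',K\otimes k')\cmod$, using the criterion of Definition~\ref{2.1.9}~(d). Fix a $(\fq,M)$-module $V$ and abbreviate $\cI=I^{\fg,K}_{\fq,M}$, $\cI'=I^{\fg\otimes k',K\otimes k'}_{\fq\otimes k',M\otimes k'}$; let $\cF,\cF'$ be the corresponding forgetful functors, so that $(\cF,\cI)$ and $(\cF',\cI')$ are adjoint pairs. Since $-\otimes k'\colon K\cmod\to(K\otimes k')\cmod$ has the faithful right adjoint $\Res^k_{k'}$, and $\ind^{\fg\otimes k'}_{\fk\otimes k'}$ is left adjoint to the faithful forgetful functor $(\fg\otimes k',K\otimes k')\cmod\to(K\otimes k')\cmod$, Lemma~\ref{2.1.10} shows that $\{\ind^{\fg\otimes k'}_{\fk\otimes k'}(Q_0\otimes k')\}$ is a family of generators of $(\fg\otimes k',K\otimes k')\cmod$ as $Q_0$ runs over a family of generators of $K\cmod$; since $k$ is Noetherian, Corollary~\ref{2.1.14} lets us take each $Q_0$ finitely generated over $k$. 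Passing to left adjoints in the evident identity $\cF^{\fk,K}_{\fg,K}\circ\Res^k_{k'}\cong\Res^k_{k'}\circ\cF^{\fk\otimes k',K\otimes k'}_{\fg\otimes k',K\otimes k'}$ yields $(\ind^\fg_\fk-)\otimes k'\cong\ind^{\fg\otimes k'}_{\fk\otimes k'}(-\otimes k')$, so these generators have the form $P\otimes k'$ with $P=\ind^\fg_\fk Q_0$, a finitely generated $(\fg,K)$-module. Hence it suffices to prove that applying $\Hom_{\fg\otimes k',K\otimes k'}(P\otimes k',-)$ to $\iota_V\colon(\cI V)\otimes k'\to\cI'(V\otimes k')$ gives a bijection for each such $P$.

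Here is where hypothesis~(i) enters. Writing $P$ as $U(\fg)\otimes_{U(\fk)}Q_0$ with its $K$-structure, the surjectivity of $\fk\oplus\fq\to\fg$ together with $[\fk,\fq]\subseteq\fg=\fq+\fk$ gives, by induction along the PBW filtration, $U(\fg)=U(\fq)\cdot U(\fk)$ as $k$-submodules of $U(\fg)$; therefore $P$ is spanned over $k$ by $U(\fq)\cdot(1\otimes Q_0)$, and since $Q_0$ is finitely generated over $k$ this makes $\cF P$ a finitely generated $(\fq,M)$-module. Using also hypothesis~(ii) (finite generation of $\fg$ and $\fq$), Theorem~\ref{3.1.6} applies to the pair $(\fg,K)$ with the module $P$ and to the pair $(\fq,M)$ with the module $\cF P$; combining these with the two adjunctions and the canonical isomorphism $\cF'(P\otimes k')\cong(\cF P)\otimes k'$ produces isomorphisms, natural in $P$,
\begin{align*}
\Hom_{\fg\otimes k',K\otimes k'}(P\otimes k',(\cI V)\otimes k')
&\cong\Hom_{\fg,K}(P,\cI V)\otimes k'\\
&\cong\Hom_{\fq,M}(\cF P,V)\otimes k'\\
&\cong\Hom_{\fq\otimes k',M\otimes k'}((\cF P)\otimes k',V\otimes k')\\
&\cong\Hom_{\fg\otimes k',K\otimes k'}(P\otimes k',\cI'(V\otimes k')).
\end{align*}
Naturality in $P$ holds because the canonical base-change maps on $\Hom$ and the adjunction isomorphisms are natural; what one does not get is a closed formula for the inverse of the canonical map in Theorem~\ref{3.1.6}, which is the subtlety pointed out in the introduction.

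The main obstacle is to verify that this composite isomorphism is exactly $\iota_V\circ(-)$. The plan is to unwind Construction~\ref{3.1.5} and the Yoneda definition of the isomorphism of Corollary~\ref{3.1.4}: tracing through the adjunctions, the $(\cF',\cI')$-adjunct of $\iota_V$ equals $(-\otimes k')$ applied to the counit $\cF\cI V\to V$ of $(\cF,\cI)$, under the identification $\cF'(-\otimes k')\cong(-\otimes k')\cF$. The key cancellation that produces this clean description uses a triangle identity of the base-change adjunction $(-\otimes k',\Res^k_{k'})$, which absorbs the unit $V\to\Res^k_{k'}(V\otimes k')$ appearing in Construction~\ref{3.1.5}, together with naturality of that unit and of the counit of $(\cF,\cI)$. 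Granting this adjunct identity, a short diagram chase---re-expressing a morphism $g\colon P\otimes k'\to(\cI V)\otimes k'$ through $\cF'(-\otimes k')\cong(-\otimes k')\cF$ and through Theorem~\ref{3.1.6} for $(\fg,K)$, then through the $(\cF,\cI)$-adjunction and Theorem~\ref{3.1.6} for $(\fq,M)$---shows that $\iota_V\circ g$ is carried onto $g$ by the chain above. Consequently $\iota_V\circ(-)$ is bijective on the chosen generators, so $\iota_V$ is an isomorphism for every $(\fq,M)$-module $V$, which is the assertion of the theorem; I would also note that hypotheses~(i) and~(ii) are inherited by the base-changed map of pairs $(\fq\otimes k',M\otimes k')\to(\fg\otimes k',K\otimes k')$, a point that will matter for the derived statement.
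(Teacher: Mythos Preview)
Your proof is correct and follows essentially the same strategy as the paper's: test $\iota_V$ against generators of the form $P\otimes k'$ via Definition~\ref{2.1.9}(d), use hypothesis~(i) to ensure $\cF P$ is a finitely generated $(\fq,M)$-module, apply Theorem~\ref{3.1.6} twice together with the adjunctions $(\cF,\cI)$ and $(\cF',\cI')$, and then check that the resulting composite is exactly $\iota_V\circ(-)$. The only cosmetic differences are that the paper tests against all finitely generated $(\fg,K)$-modules rather than your specific generators $\ind^\fg_\fk Q_0$, and it handles the final verification by noting $k'$-linearity and restricting to morphisms of the form $f\otimes 1$, whereas you identify the $(\cF',\cI')$-adjunct of $\iota_V$ as $\epsilon_V\otimes k'$---these are equivalent unwindings of the same mate construction.
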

We also have its derived version:
\begin{defn}\label{3.1.8}
Let $k$ be a Noetherian ring, and $(\fg,K)$ be a pair. Suppose that $\fg$ is finitely generated. Set $\Coh(\fg,K)$ as the full subcategory of the derived category $D(\fg,K)$ of $(\fg,K)$-modules spanned by cohomologically bounded complexes with finitely generated cohomologies.
\end{defn}
\begin{thm}\label{3.1.9}
Let $k$ be a Noetherian ring, $(\fg,K)$ be a pair, and $k'$ be a flat $k$-algebra. Then the flat base change theorem
\[\bR\Hom_{\fg,K}(-,-)\otimes k'\simeq\bR\Hom_{\fg,K}(-,-\otimes k')\]
holds on $\Coh(\fg,K)^{op}\times D(\fg,K)^+$.
\end{thm}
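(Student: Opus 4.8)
The statement is the derived enhancement of Theorem \ref{3.1.6}. I would prove it by reducing to the underived base change isomorphism at the level of each cohomology module, exploiting that $X\in\Coh(\fg,K)$ can be represented by a bounded-above complex of finitely generated projectives (or, dually, that $-\otimes k'$ over a flat $k$-algebra is exact and sends injective resolutions to resolutions good enough to compute $\bR\Hom$ on the right). Concretely, the plan is: first, observe that $-\otimes k'$ is exact because $k'$ is $k$-flat, so it descends to a triangulated functor $D(\fg,K)\to D(\fg\otimes k',K\otimes k')$ and commutes with cohomology; in particular it suffices to exhibit a natural map and check it is a quasi-isomorphism. Second, construct the comparison map: the underived adjunction unit together with Construction-style manipulations (as in Construction \ref{3.1.5} but for $\Hom$ rather than $I$) gives a natural transformation $\bR\Hom_{\fg,K}(-,-)\otimes k'\to \bR\Hom_{\fg\otimes k',K\otimes k'}(-\otimes k',-\otimes k')$ on $\Coh(\fg,K)^{op}\times D(\fg,K)^+$.

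Third, and this is the technical heart, I would check the map is an isomorphism by a dévissage on $X\in\Coh(\fg,K)$. Since $X$ is a cohomologically bounded complex with finitely generated cohomology and $\fg$ is finitely generated over a Noetherian $k$, one can resolve $X$ by a bounded-above complex $P^\bullet$ of \emph{finitely generated} $(\fg,K)$-modules that are projective (or at least $\Hom_{\fg,K}$-acyclic against $D^+$), using that $(\fg,K)\cmod$ has enough projectives of the form $\ind^\fg_\fk(\text{free }K\text{-module})$ and that finitely generated modules admit finitely generated projective "covers" in the relevant sense — here the finite generation of $\fg$ is what keeps the resolving objects finitely generated. Then $\bR\Hom_{\fg,K}(X,-)$ is computed by the complex $\Hom_{\fg,K}(P^\bullet,-)$, each term of which is $\Hom_{\fg,K}(P^i,-)$ with $P^i$ finitely generated; apply Theorem \ref{3.1.6} termwise to get $\Hom_{\fg,K}(P^i,Y)\otimes k'\cong\Hom_{\fg\otimes k',K\otimes k'}(P^i\otimes k',Y\otimes k')$ naturally in $Y$. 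Since $-\otimes k'$ is exact it commutes with totalization/taking the total complex, so $\Hom_{\fg,K}(P^\bullet,Y)\otimes k'\cong\Hom_{\fg\otimes k',K\otimes k'}(P^\bullet\otimes k',Y\otimes k')$; finally $P^\bullet\otimes k'$ is a complex of finitely generated projective $(\fg\otimes k',K\otimes k')$-modules quasi-isomorphic to $X\otimes k'$, so the right-hand side computes $\bR\Hom_{\fg\otimes k',K\otimes k'}(X\otimes k',Y\otimes k')$. The boundedness hypotheses ($X$ bounded, $Y\in D^+$) ensure the relevant spectral sequences / complexes behave and that "bounded-above resolution of $X$" against "bounded-below $Y$" yields a bounded double complex in each total degree, so no convergence subtleties arise.

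The main obstacle I anticipate is the resolution step: producing, for $X\in\Coh(\fg,K)$, a bounded-above complex of \emph{finitely generated} projective $(\fg,K)$-modules quasi-isomorphic to $X$, and verifying that its base change along $k\to k'$ still consists of finitely generated projectives and still resolves $X\otimes k'$. This requires knowing that $(\fg,K)\cmod$ has enough finitely generated projectives when $\fg$ is a finitely generated $k$-module over Noetherian $k$ — which should follow from the adjunction $\ind^\fg_\fk\dashv \cF$ together with the fact that $K\cmod$ over Noetherian $k$ is compactly generated by finitely presented modules (Corollary \ref{2.1.14}), so that $\ind^\fg_\fk$ of a finite free $K$-module is a finitely generated projective $(\fg,K)$-module and these surject onto any finitely generated $(\fg,K)$-module. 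Compatibility with base change is then immediate since $\ind^\fg_\fk$ and $-\otimes k'$ commute (both are left adjoints, and one checks $(\ind^\fg_\fk W)\otimes k'\cong \ind^{\fg\otimes k'}_{\fk\otimes k'}(W\otimes k')$ on the generators). A secondary point to handle carefully is the naturality of the comparison map in the second variable over the \emph{unbounded}-below issue — but this is sidestepped by restricting to $D(\fg,K)^+$ as in the statement. Alternatively, if the projective-resolution route is awkward, one can run the dual argument using injective resolutions of $Y$ and the fact (to be extracted, as in the proof of Theorem \ref{3.1.10}) that $-\otimes k'$ sends injectives to $\bR\Hom(X\otimes k',-)$-acyclics; I would pick whichever of the two is cleaner given the machinery already set up in Section 3.
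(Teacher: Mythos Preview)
Your primary route via projective resolutions of $X$ has a genuine gap: the category $(\fg,K)\cmod$ does not in general have enough projectives. The problem is upstream in $K\cmod$. Comodule categories over flat Hopf algebras are Grothendieck abelian and hence have enough injectives, but there is no reason for them to have projectives; for a general flat affine group scheme $K$ over a Noetherian ring there is no meaningful notion of ``finite free $K$-module'', and the compact generators supplied by Corollary~\ref{2.1.14} are finitely generated comodules, not projective ones. Consequently the objects $\ind^\fg_\fk Q$ that you want to use to resolve $X$ need not be projective in $(\fg,K)\cmod$, and the step ``produce a bounded-above complex of finitely generated projective $(\fg,K)$-modules quasi-isomorphic to $X$'' cannot be carried out under the hypotheses of the theorem.

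Your alternative at the end is exactly the paper's argument and is the one that works. The paper takes $X$ to be a single finitely generated $(\fg,K)$-module, chooses a bounded-below complex $I$ of injectives representing the second variable, and applies Theorem~\ref{3.1.6} termwise to get $\Hom_{\fg,K}(X,I)\otimes k'\cong\Hom_{\fg,K}(X,I\otimes k')$. The point you flagged as ``to be extracted'' is precisely Lemma~\ref{3.2.14}: if $I$ is injective then $\Res^k_{k'}(I\otimes k')$ is again injective as a $(\fg,K)$-module, so the right-hand complex still computes $\bR\Hom_{\fg,K}(X,I\otimes k')$. One then passes from finitely generated $X$ to all of $\Coh(\fg,K)$ by shifts and finite colimits. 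So drop the projective-resolution plan and run the injective-resolution argument, citing Lemma~\ref{3.2.14} for the acyclicity step.
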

\begin{thm}\label{3.1.10}
Let $k\to k'$ be a flat ring homomorphism, and $(\fq,M)\to(\fg,K)$ be a map of pairs. Suppose that the following conditions are satisfied:
\begin{enumerate}
\renewcommand{\labelenumi}{(\roman{enumi})}
\item $\fk\oplus\fq\to\fg$ is surjective.
\item $\fq$ and $\fg$ are finitely generated as $k$-modules.
\end{enumerate}
Then we have a natural isomorphism
\[(\bR I^{\fg,K}_{\fq,M}-)\otimes_k k'\simeq \bR I^{\fg\otimes k',K\otimes k'}_{\fq\otimes k',M\otimes k'}(-\otimes_k k')\]
on $D^+(\fq,M)$.
\end{thm}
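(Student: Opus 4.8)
The plan is to deduce the derived base change formula from the underived one (Theorem \ref{3.1.7}) by a standard acyclic-resolution argument, so the crux is identifying a class of objects adapted to $I^{\fg,K}_{\fq,M}$ that behaves well under $-\otimes_k k'$. Concretely, I would work with injective $(\fq,M)$-modules. Recall that $I^{\fg,K}_{\fq,M}$ is right adjoint to an exact functor $\cF^{\fq,M}_{\fg,K}$ (Lemma \ref{1.1.8}), so it preserves injectives and, more to the point, any bounded-below complex of $(\fq,M)$-modules has a bounded-below injective resolution, and $\bR I^{\fg,K}_{\fq,M}$ is computed by applying $I^{\fg,K}_{\fq,M}$ degreewise to such a resolution. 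The first step is therefore to reduce, via the exactness of $-\otimes_k k'$ (flatness of $k'$) and a spectral sequence / totalization argument on $D^+$, to checking the following pointwise claim: for an injective $(\fq,M)$-module $J$, the comparison map $\iota_J\colon (I^{\fg,K}_{\fq,M}J)\otimes_k k'\to I^{\fg\otimes k',K\otimes k'}_{\fq\otimes k',M\otimes k'}(J\otimes_k k')$ of Construction \ref{3.1.5} is an isomorphism, and moreover $J\otimes_k k'$ is acyclic for $I^{\fg\otimes k',K\otimes k'}_{\fq\otimes k',M\otimes k'}$ (so that the right-hand side of the claimed equivalence is genuinely computed by $I$ applied degreewise to $(\text{injective resolution})\otimes k'$).

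For the isomorphism $\iota_J$ at an injective $J$: this is exactly where Theorem \ref{3.1.7} enters, since hypotheses (i) and (ii) there coincide with those here. So the only new content is the acyclicity statement. Here I would argue as follows. By adjunction, for any $(\fg\otimes k',K\otimes k')$-module $X$ one has $\Hom_{\fg\otimes k',K\otimes k'}(X, I^{\fg\otimes k',K\otimes k'}_{\fq\otimes k',M\otimes k'}(J\otimes k'))\cong\Hom_{\fq\otimes k',M\otimes k'}(\cF(X),J\otimes k')$, and by Theorem \ref{3.1.6} (the flat base change for $\Hom$, applicable since $\fq\otimes k'$ is finitely generated and we may feed it finitely generated $X$ after reducing to generators via Corollary \ref{2.1.14}) this is controlled by $\Hom_{\fq,M}(-,J)\otimes k'$, which is exact in $J$ when $J$ is injective. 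The clean way to package this: show that $-\otimes_k k'$ carries injective $(\fq,M)$-modules to $I^{\fg\otimes k',K\otimes k'}_{\fq\otimes k',M\otimes k'}$-acyclic objects by checking that $R^n I^{\fg\otimes k',K\otimes k'}_{\fq\otimes k',M\otimes k'}(J\otimes k')$, which is computed from an injective resolution of $J\otimes k'$ over $k'$, vanishes for $n>0$ — and for this one compares with $(R^n I^{\fg,K}_{\fq,M} J)\otimes k' = 0$ using that the cohomology of a complex of flat modules base-changes, together with the degreewise isomorphism $\iota$ on the injective resolution of $J$ (which resolves $J\otimes k'$ after $\otimes k'$ by flatness of $k'$, though these modules need not be injective over $k'$ — here one uses Corollary \ref{3.1.4} or a direct resolution-independence argument to still compute $\bR I$). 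Once the pointwise claim holds, the general equivalence on $D^+(\fq,M)$ follows: pick an injective resolution $V\to J^\bullet$, note $V\otimes k'\to J^\bullet\otimes k'$ is a resolution by $I$-acyclics, hence $\bR I^{\fg\otimes k',K\otimes k'}_{\fq\otimes k',M\otimes k'}(V\otimes k')\simeq I^{\fg\otimes k',K\otimes k'}_{\fq\otimes k',M\otimes k'}(J^\bullet\otimes k')\simeq (I^{\fg,K}_{\fq,M}J^\bullet)\otimes k'\simeq (\bR I^{\fg,K}_{\fq,M}V)\otimes k'$, the middle isomorphism being $\iota$ degreewise and the outer ones being flatness of $k'$.

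I expect the main obstacle to be the acyclicity step, specifically the subtlety that $J\otimes_k k'$ need not be injective as a $(\fq\otimes k',M\otimes k')$-module even when $J$ is injective over $(\fq,M)$, so one cannot naively say $\bR I$ on the right is computed by a single term. The resolution of this difficulty is precisely the observation flagged in the introduction: one shows directly that $J\otimes k'$ is \emph{acyclic} for the right-hand functor — i.e. $R^{>0}I^{\fg\otimes k',K\otimes k'}_{\fq\otimes k',M\otimes k'}(J\otimes k')=0$ — by transporting the vanishing of $R^{>0}I^{\fg,K}_{\fq,M}(J)=0$ through $-\otimes k'$ using flatness (exactness of $\otimes k'$ commutes with taking cohomology) and the degreewise comparison map $\iota$ on an injective resolution of $J$. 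A secondary technical point is justifying that we may restrict to finitely generated test objects $X$ when invoking Theorem \ref{3.1.6}; this is handled by Corollary \ref{2.1.14}, which says $(\fg\otimes k',K\otimes k')\cmod$ is generated by finitely generated objects, together with Lemma \ref{2.1.10} on generators being preserved under $-\otimes k'$, so that checking an isomorphism of $\Hom$-functors against all objects reduces to checking it against finitely generated ones.
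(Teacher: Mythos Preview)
Your overall strategy matches the paper's: reduce to a bounded-below complex $J^\bullet$ of injective $(\fq,M)$-modules, apply Theorem \ref{3.1.7} degreewise, and check that each $J^n\otimes k'$ is acyclic for $I^{\fg\otimes k',K\otimes k'}_{\fq\otimes k',M\otimes k'}$. The gap is in your acyclicity argument. Your first suggestion---that $\Hom_{\fq\otimes k',M\otimes k'}(\cF(X),J\otimes k')$ is ``controlled by $\Hom_{\fq,M}(-,J)\otimes k'$'' via Theorem \ref{3.1.6}---does not apply as stated, because Theorem \ref{3.1.6} concerns $(\fq,M)$-modules over $k$ and their base changes, whereas a general finitely generated $(\fq\otimes k',M\otimes k')$-module need not be of the form $Y\otimes k'$. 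Your second suggestion---transporting the vanishing of $R^{>0}I^{\fg,K}_{\fq,M}(J)$ through an injective resolution of $J$---is circular exactly as you suspect: it requires knowing that $E^\bullet\otimes k'$ consists of acyclic objects, which is the claim in question.

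The paper resolves this via Lemma \ref{3.2.14} combined with Corollary \ref{3.1.4}. Lemma \ref{3.2.14} asserts that if $J$ is injective in $(\fq,M)\cmod$ then $\Res^k_{k'}(J\otimes k')$ is again injective in $(\fq,M)\cmod$; its proof tests only against monomorphisms between finitely generated $(\fq,M)$-modules, so Theorem \ref{3.1.6} applies cleanly on the $k$-side. Then Corollary \ref{3.1.4} gives $\Res^k_{k'}\bigl(R^nI^{\fg\otimes k',K\otimes k'}_{\fq\otimes k',M\otimes k'}(J\otimes k')\bigr)\cong R^nI^{\fg,K}_{\fq,M}\bigl(\Res^k_{k'}(J\otimes k')\bigr)=0$ for $n>0$, and since $\Res^k_{k'}$ is exact and conservative the acyclicity of $J\otimes k'$ follows. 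You did invoke Corollary \ref{3.1.4} in passing, so you were close; what was missing is the explicit recognition that injectivity survives on the \emph{restriction} $\Res^k_{k'}(J\otimes k')$ rather than on $J\otimes k'$ as a $(\fq\otimes k',M\otimes k')$-module, which is precisely what lets one stay on the $k$-side where Theorem \ref{3.1.6} is available.
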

For a simple application, we can prove the algebraic Borel-Weil theorem over fields of characteristic 0. Suppose that $k$ is a field of characteristic 0. Let $G$ be a split reductive group. Fix a maximal split torus $T$ of $G$ and a positive root system of the Lie algebra $\fg$ of $G$. Write $\bar{\fb}$ for the Lie subalgebra of $\fg$ corresponding to the negative roots. 
\begin{prop}\label{3.1.11}
Let $\lambda$ be a dominant character of $T$. There is an isomorphism $I^{\fg,G}_{\bar{\fb},T}(\lambda)\otimes\bar{k}\cong
I^{\fg\otimes\bar{k},G\otimes\bar{k}}_{\bar{\fb}\otimes\bar{k},T\otimes\bar{k}}(\lambda\otimes\bar{k})$, where $\bar{k}$ is the algebraic closure of $k$. In particular, $I^{\fg,G}_{\bar{\fb},T}(\lambda)$ is an absolutely irreducible representation of $G$.
\end{prop}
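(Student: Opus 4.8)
The plan is to reduce the statement to the classical algebraic Borel--Weil theorem over the algebraically closed field $\bar k$ by invoking the flat base change theorem. Concretely, the isomorphism $I^{\fg,G}_{\bar\fb,T}(\lambda)\otimes\bar k\cong I^{\fg\otimes\bar k,G\otimes\bar k}_{\bar\fb\otimes\bar k,T\otimes\bar k}(\lambda\otimes\bar k)$ should come straight from Theorem \ref{3.1.7} applied to the field extension $k\to\bar k$, and the absolute irreducibility should then follow because the right-hand side is, by the classical theorem, the finite-dimensional irreducible $G\otimes\bar k$-representation of highest weight $\lambda\otimes\bar k$.

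First I would verify that Theorem \ref{3.1.7} applies to the map of pairs $(\bar\fb,T)\to(\fg,G)$ over $k$ and the ring homomorphism $k\to\bar k$. A field is Noetherian and $\bar k/k$ is flat (indeed free), so the running hypotheses of the section are met. Since $G$ and $T$ are smooth affine of finite type over the field $k$, their Lie algebras $\fg$ and $\bar\fb$ are finite-dimensional, hence $(\fg,G)$ and $(\bar\fb,T)$ are pairs in the sense of 1.2 by Example \ref{1.1.6}, and condition (ii) of Theorem \ref{3.1.7} holds. For condition (i), note that here $\fk=\Lie(G)=\fg$, and the first component of $\fk\oplus\bar\fb\to\fg$ is the identity $\fg\to\fg$, so the map is surjective. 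Theorem \ref{3.1.7} then yields that $\iota$ is an isomorphism, which is the displayed base change isomorphism with $V=\lambda$.

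Second, over the algebraically closed field $\bar k$ of characteristic $0$ I would invoke the classical algebraic Borel--Weil theorem to identify $I^{\fg\otimes\bar k,G\otimes\bar k}_{\bar\fb\otimes\bar k,T\otimes\bar k}(\lambda\otimes\bar k)$ (the Borel--Weil induction of Example \ref{1.2.5}) with the irreducible representation of $G\otimes\bar k$ of highest weight $\lambda\otimes\bar k$; in particular it is finite-dimensional and irreducible. By the base change isomorphism, $I^{\fg,G}_{\bar\fb,T}(\lambda)$ is then a finite-dimensional $G$-module whose extension of scalars to $\bar k$ is irreducible, i.e. it is absolutely irreducible; and if one also wants irreducibility already over $k$, a proper nonzero $(\fg,G)$-submodule would, by faithful flatness of $\bar k/k$, base-change to a proper nonzero submodule over $\bar k$, a contradiction. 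The one place where something beyond bookkeeping is required is the identification over $\bar k$ of the abstract right adjoint $I^{\fg,G}_{\bar\fb,T}$ with the classical Borel--Weil induction $H^0(G/\bar B,\cL_\lambda)$ (equivalently, with the classical cohomological induction of \cite{KV} or \cite{J1}), so that the classical irreducibility and finite-dimensionality statements may be quoted; everything else is routine verification of the hypotheses of Theorem \ref{3.1.7} and faithfully flat descent.
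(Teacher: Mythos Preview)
Your proposal is correct and matches the paper's intended argument: the paper gives no explicit proof of Proposition~\ref{3.1.11} but presents it as an immediate application of Theorem~\ref{3.1.7} to the flat extension $k\to\bar k$, followed by the classical Borel--Weil theorem over $\bar k$. Your verification of conditions (i) and (ii) (in particular the observation that $\fk=\Lie(G)=\fg$ makes surjectivity automatic) and the remark on the identification of $I^{\fg,G}_{\bar\fb,T}$ with the classical construction over $\bar k$ are exactly the details one would fill in.
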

We write $V(\lambda)=I^{\fg,G}_{\bar{\fb},T}(\lambda)$. The coordinate ring of $G$ will be denoted by $\cO(G)$.
\begin{cor}\label{3.1.12}
The homomorphism of coalgebras $\oplus_\lambda\End_k(V(\lambda))\to\cO(G)$ is an isomorphism, where $\lambda$ runs through all dominant characters of $T$.
\end{cor}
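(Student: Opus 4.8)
The plan is to recognize the stated map as the algebraic Peter--Weyl decomposition of $\cO(G)$: I would identify $\cO(G)$, as a coalgebra, with the direct sum of the coefficient coalgebras of the irreducible $G$-modules, and then check that each coefficient coalgebra is the full matrix coalgebra $\End_k(V(\lambda))$, the latter step resting on the absolute irreducibility of $V(\lambda)$ from Proposition \ref{3.1.11}.

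First I would collect two inputs from the structure theory of split reductive groups. Since $k$ has characteristic $0$ and $G$ is reductive, every finite-dimensional $G$-module is semisimple. By highest weight theory --- carried out over $\bar{k}$, where $V(\lambda)\otimes\bar{k}$ is the Borel--Weil module of highest weight $\lambda$ by Proposition \ref{3.1.11}, and then descended to $k$ using that $V(\lambda)$ is irreducible together with base change for $\Hom$ --- the simple $G$-modules are exactly the $V(\lambda)$ with $\lambda$ a dominant character of $T$, and these are pairwise non-isomorphic.

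Next, regarding $H=\cO(G)$ as a comodule over itself through $\Delta$, Corollary \ref{2.1.14} (valid since a field is Noetherian) exhibits $H$ as the filtered union of its finite-dimensional subcomodules; each of these is a finite-dimensional $G$-module, hence semisimple, so $H$ is a semisimple $H$-comodule, i.e.\ a cosemisimple coalgebra, and its simple comodules are the $V(\lambda)$. Then the standard structure theory of cosemisimple coalgebras over a field gives a coalgebra decomposition $H=\bigoplus C(V(\lambda))$ indexed by dominant characters of $T$, where $C(V(\lambda))\subseteq H$ is the coefficient coalgebra of $V(\lambda)$, namely the image of the canonical coalgebra homomorphism $\End_k(V(\lambda))\to H$ attached to the coaction $V(\lambda)\to V(\lambda)\otimes H$ (in coordinates, $e_{ij}\mapsto a_{ij}$ for the matrix coefficients $a_{ij}$). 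This decomposition is compatible with the map in the statement, so it only remains to see that $\End_k(V(\lambda))\to H$ is injective. For this I would argue: since $V(\lambda)\otimes\bar{k}$ is irreducible, $\End_G(V(\lambda))$ is a finite-dimensional division $k$-algebra with $\End_G(V(\lambda))\otimes_k\bar{k}\hookrightarrow\End_{G\otimes\bar{k}}(V(\lambda)\otimes\bar{k})=\bar{k}$, whence $\End_G(V(\lambda))=k$; the Jacobson density theorem then identifies the (finite-dimensional) dual algebra $C(V(\lambda))^\ast$ with $\End_k(V(\lambda))$, so $\dim_k C(V(\lambda))=(\dim_k V(\lambda))^2=\dim_k\End_k(V(\lambda))$ and the surjection $\End_k(V(\lambda))\twoheadrightarrow C(V(\lambda))$ must be bijective. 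Since a direct sum of subcoalgebras of $H$ is again a subcoalgebra, assembling the summands yields the asserted coalgebra isomorphism $\bigoplus_\lambda\End_k(V(\lambda))\xrightarrow{\ \sim\ }\cO(G)$.

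I expect the hard part to be exactly the last point: showing the coefficient coalgebra of $V(\lambda)$ is all of $\End_k(V(\lambda))$ rather than a proper quotient. Over a non-algebraically-closed $k$ this genuinely uses that $V(\lambda)$ stays irreducible over $\bar{k}$, i.e.\ Proposition \ref{3.1.11}; without that one only gets $C(V(\lambda))\cong\End_D(V(\lambda))^\ast$ for $D=\End_G(V(\lambda))$ possibly strictly larger than $k$. A secondary, more bookkeeping issue is confirming that the abstract decomposition of $H$ produced by coalgebra theory really is effected by the specific map named in the statement; this is dispatched by observing that both are built from the same coactions $V(\lambda)\to V(\lambda)\otimes\cO(G)$, via the matrix-coefficient description.
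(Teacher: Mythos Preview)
Your argument is correct, but it takes a different route from the paper. The paper's proof is a one-liner: since $k\to\bar{k}$ is faithfully flat, the map $\bigoplus_\lambda\End_k(V(\lambda))\to\cO(G)$ is an isomorphism if and only if its base change to $\bar{k}$ is, and by Proposition~\ref{3.1.11} the base-changed map is exactly the corresponding map for $G\otimes\bar{k}$, which is the classical algebraic Peter--Weyl theorem over an algebraically closed field. You instead reprove Peter--Weyl directly over $k$: you establish semisimplicity, classify the simples as the $V(\lambda)$, decompose $\cO(G)$ into coefficient coalgebras via cosemisimple coalgebra theory, and then use absolute irreducibility (Proposition~\ref{3.1.11}) together with Jacobson density to force $C(V(\lambda))=\End_k(V(\lambda))$. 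Your approach is more self-contained and makes explicit where absolute irreducibility enters (the point $\End_G(V(\lambda))=k$), while the paper's approach is shorter and more thematically aligned with the paper's whole philosophy of reducing questions along flat base change; note also that your classification of simples over $k$ already uses the flat base change for $\Hom$, so both proofs ultimately lean on the same ingredients, just packaged differently.
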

\begin{proof}
Passing to the base change along $k\to\bar{k}$, we may assume that $k$ is algebraically closed. Then the assertion follows from the algebraic Peter-Weyl theorem.
\end{proof}
\begin{cor}[\cite{MR0277536}]\label{3.1.13}
The absolutely irreducible representations $V(\lambda)$ of $G$ form a complete list of irreducible representations of $G$.
\end{cor}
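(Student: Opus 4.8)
The plan is to read this off from the coalgebra decomposition of Corollary \ref{3.1.12}. Since a representation of $G$ is the same thing as a comodule over the coordinate ring $\cO(G)$ (Lemma \ref{2.2.1}), everything reduces to understanding comodules over $\bigoplus_\lambda \End_k(V(\lambda))$, where $\lambda$ ranges over the dominant characters of $T$.

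First I would recall the comodule theory of a single matrix coalgebra. If $V$ is a finite-dimensional $k$-vector space, then $\End_k(V) \cong V \otimes_k V^*$ carries its usual coalgebra structure (the linear dual of the matrix algebra), $V$ is naturally an $\End_k(V)$-comodule, and every $\End_k(V)$-comodule is a (non-canonical) direct sum of copies of $V$; in particular $V$ is, up to isomorphism, the unique simple $\End_k(V)$-comodule, and $\End_k(V)$ is a simple coalgebra. This is the comodule analogue of the fact that the simple algebra $\End_k(V)^*$ has a unique simple module; the hypotheses of characteristic $0$ and $G$ split are what guarantee, via Proposition \ref{3.1.11}, that each $V(\lambda)$ is genuinely absolutely irreducible, so that $\End_k(V(\lambda))$ really is simple. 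Next, for a coalgebra which decomposes as a direct sum $C = \bigoplus_i C_i$ of subcoalgebras, every $C$-comodule $W$ splits canonically as $W = \bigoplus_i W_i$, with $W_i$ the preimage of $W \otimes C_i$ under the coaction and $W_i$ a $C_i$-comodule; consequently a $C$-comodule is simple exactly when it is a simple $C_i$-comodule for precisely one index $i$. Corollary \ref{3.1.12} exhibits $\cO(G)$ as exactly such a direct sum, with $C_\lambda = \End_k(V(\lambda))$.

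Combining these, every representation of $G$ is a direct sum of copies of the various $V(\lambda)$. An irreducible representation is in particular finite-dimensional, because every comodule is the union of its finite-dimensional subcomodules (Corollary \ref{2.1.14}) and a simple one cannot be a proper such union; hence every irreducible representation of $G$ is isomorphic to some $V(\lambda)$. Finally, for distinct dominant $\lambda$ the representations $V(\lambda)$ are pairwise non-isomorphic, since they span the distinct simple subcoalgebras $\End_k(V(\lambda))$ of $\cO(G)$ (equivalently, they have distinct highest weights $\lambda$). This yields the complete irredundant list. I do not expect a genuine obstacle; the only point requiring care is that the decomposition of Corollary \ref{3.1.12} is a direct sum of \emph{subcoalgebras}, i.e.\ that the comultiplication of $\cO(G)$ carries $\End_k(V(\lambda))$ into $\End_k(V(\lambda)) \otimes \End_k(V(\lambda))$, but this is precisely the assertion there that the map is a homomorphism of coalgebras.
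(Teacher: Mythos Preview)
Your proposal is correct. The paper does not supply an explicit proof of Corollary \ref{3.1.13}; it simply records the statement as a corollary of the Peter--Weyl decomposition (Corollary \ref{3.1.12}) with a reference to \cite{Ti}. Your argument is precisely the standard deduction the paper is tacitly invoking: interpret $G$-representations as $\cO(G)$-comodules, use that $\cO(G)\cong\bigoplus_\lambda\End_k(V(\lambda))$ is a direct sum of simple subcoalgebras, and read off the simple comodules summand by summand. The one point you flag as needing care --- that the map in Corollary \ref{3.1.12} is a coalgebra homomorphism, so each $\End_k(V(\lambda))$ really is a subcoalgebra --- is exactly the right thing to check, and it is built into the statement of \ref{3.1.12}. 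Nothing further is required.
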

\subsection{Proof of the theorems}
In this section, let $k$ be a Noetherian ring.
\begin{lem}\label{3.2.1}
Let $C$ be a flat coalgebra, and $V,X,Y$ be $C$-comodules. Suppose that we are given a commutative diagram of $k$-modules
\[\xymatrix{V\ar[rr]^f\ar[rd]_g&&Y\\
&X\ar[ru]_i,}\]
where $i$ is injective. If the maps $f$ and $i$ intertwine the coactions of $C$, then so does $g$.
\end{lem}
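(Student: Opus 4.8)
The plan is a direct diagram chase on coaction maps, with flatness of $C$ used exactly once, to upgrade injectivity of $i$ to injectivity of $i\otimes\mathrm{id}_C$.

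First I would fix notation for the structure maps: write $\rho_V\colon V\to V\otimes C$, $\rho_X\colon X\to X\otimes C$, and $\rho_Y\colon Y\to Y\otimes C$ for the coactions. The hypotheses become the identities $\rho_Y\circ f=(f\otimes\mathrm{id}_C)\circ\rho_V$ and $\rho_Y\circ i=(i\otimes\mathrm{id}_C)\circ\rho_X$, together with the commutativity relation $f=i\circ g$ coming from the triangle. What must be shown is the single identity $\rho_X\circ g=(g\otimes\mathrm{id}_C)\circ\rho_V$, i.e.\ that $g$ intertwines the coactions.

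Next I would evaluate $\rho_Y\circ i\circ g$ in two ways. Using $f=i\circ g$ and then that $f$ is a comodule map, it equals $\rho_Y\circ f=(f\otimes\mathrm{id}_C)\circ\rho_V=(i\otimes\mathrm{id}_C)\circ(g\otimes\mathrm{id}_C)\circ\rho_V$. Using instead that $i$ is a comodule map, it equals $(i\otimes\mathrm{id}_C)\circ\rho_X\circ g$. Comparing the two expressions gives $(i\otimes\mathrm{id}_C)\circ(\rho_X\circ g)=(i\otimes\mathrm{id}_C)\circ\bigl((g\otimes\mathrm{id}_C)\circ\rho_V\bigr)$.

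Finally, since $i$ is injective and $C$ is flat over $k$, the map $i\otimes\mathrm{id}_C\colon X\otimes C\to Y\otimes C$ is injective, so it may be cancelled from the last equation, yielding $\rho_X\circ g=(g\otimes\mathrm{id}_C)\circ\rho_V$ as desired. The only step carrying any content is this cancellation, namely the observation that tensoring the monomorphism $i$ with the flat module $C$ preserves monicity; everything else is formal manipulation of the defining diagrams, so I do not expect a genuine obstacle. The point of stating this as a separate lemma is to isolate that one use of flatness, so that the later base change arguments in \S 3.1 can invoke it cleanly rather than repeating the chase.
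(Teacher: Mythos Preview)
Your proof is correct and is essentially identical to the paper's: both compute $(i\otimes\mathrm{id}_C)\circ(g\otimes\mathrm{id}_C)\circ\rho_V=(f\otimes\mathrm{id}_C)\circ\rho_V=\rho_Y\circ f=\rho_Y\circ i\circ g=(i\otimes\mathrm{id}_C)\circ\rho_X\circ g$ and then cancel the injective map $i\otimes\mathrm{id}_C$ using flatness of $C$. The only difference is presentational---you frame the chain as evaluating $\rho_Y\circ i\circ g$ two ways, whereas the paper writes it as one string of equalities.
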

\begin{lem}\label{3.2.2}
Let $(\fg,K)$ be a pair over a commutative ring $k$. Then a $k$-submodule $V'$ of a $(\fg,K)$-module $V$ is a subobject in $(\fg,K)\cmod$ if and only if it is a submodule over both $\fg$ and $K$.
\end{lem}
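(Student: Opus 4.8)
The statement to prove is Lemma~\ref{3.2.2}: for a pair $(\fg,K)$ over $k$ and a $(\fg,K)$-module $V$, a $k$-submodule $V'\subseteq V$ underlies a subobject in $(\fg,K)\cmod$ if and only if $V'$ is stable under the $\fg$-action and underlies a subobject in $K\cmod$. The plan is straightforward: the ``only if'' direction is immediate, since the forgetful functors $(\fg,K)\cmod\to\fg\cmod$ and $(\fg,K)\cmod\to K\cmod$ are faithful and exact, so any subobject is in particular a $\fg$-submodule and a $K$-subcomodule. The content is in the ``if'' direction, where one must verify that the data already present on $V'$ (the restricted $\fg$-action, the restricted coaction of $H=k[K]$) actually assemble into a $(\fg,K)$-module structure making the inclusion a morphism of pairs.

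First I would record what must be checked. A $(\fg,K)$-module structure on $V'$ consists of a $K$-comodule structure (equivalently, by Lemma~\ref{2.2.1}, a coaction $\rho:V'\to V'\otimes H$), a $\fg$-action $\pi:\fg\otimes V'\to V'$, subject to: (a) $\pi$ is $K$-equivariant; (b) the differential of $\rho$ agrees with $\psi$ composed with $\pi$, i.e.\ the two induced $\fk$-actions coincide. By hypothesis $V'$ is a $K$-subcomodule, so $\rho$ restricts from the coaction $\rho_V$ of $V$; and $V'$ is a $\fg$-submodule, so $\pi$ restricts from $\pi_V$. The point is that (a) and (b), which hold for $V$, are inherited by the submodule. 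For (a), apply Lemma~\ref{3.2.1} with $C=H$: we have the commutative triangle $\fg\otimes V'\xrightarrow{\pi}V'\hookrightarrow V$ whose composite $\fg\otimes V'\to V$ is the restriction of the $K$-equivariant map $\pi_V$ (using that $\fg\otimes V'\hookrightarrow\fg\otimes V$ is a subcomodule, $\fg$ being flat over $k$, so that $\fg\otimes V'$ is itself a $K$-comodule), hence intertwines coactions, and $V'\hookrightarrow V$ intertwines coactions by hypothesis; since the inclusion is injective, Lemma~\ref{3.2.1} gives that $\pi$ itself is $K$-equivariant. For (b), the two $\fk$-actions on $V'$ — the one through $d\rho$ and the one through $\pi\circ(\psi\otimes\mathrm{id})$ — are both restrictions of the corresponding $\fk$-actions on $V$, which coincide because $V$ is a $(\fg,K)$-module; hence they coincide on $V'$. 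This produces the $(\fg,K)$-module structure on $V'$, and by construction the inclusion $V'\hookrightarrow V$ is $\fg$-linear and comodule-linear, hence a morphism in $(\fg,K)\cmod$; it is a monomorphism there because it is injective on underlying $k$-modules.

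The only genuinely delicate point is making sure $\fg\otimes V'$ really is a $K$-subcomodule of $\fg\otimes V$ so that Lemma~\ref{3.2.1} applies to the $K$-equivariance of $\pi$; this is where flatness of $\fg$ over $k$ (part of the definition of a pair, via Condition~\ref{1.1.5}) enters, exactly as flatness of $C$ was used in Lemma~\ref{3.2.1} itself. Everything else is a diagram chase with no surprises. I would therefore present the argument in two short moves: (1) observe the triangle and invoke Lemma~\ref{3.2.1} for $K$-equivariance of the restricted $\fg$-action; (2) note the compatibility of the two $\fk$-actions is inherited verbatim from $V$; then conclude that the inclusion is a monomorphism of $(\fg,K)$-modules.
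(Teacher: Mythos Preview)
Your proposal is correct and follows the same route as the paper: dispose of the ``only if'' direction as obvious, then for the ``if'' direction use Lemma~\ref{3.2.1} (with $C=k[K]$) on the triangle $\fg\otimes V'\to V'\hookrightarrow V$ to get $K$-equivariance of the restricted action, and observe that the compatibility of the two $\fk$-actions is inherited by restriction.

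One small slip to fix: Condition~\ref{1.1.5} is a condition on $K$ (namely that $I_e/I_e^2$ and $\fk$ satisfy Condition~\ref{1.1.3}); it says nothing about $\fg$, and the paper imposes no flatness hypothesis on $\fg$ in the definition of a pair. Fortunately your argument does not need it. What Lemma~\ref{3.2.1} requires is that the \emph{coalgebra} $C=k[K]$ be flat, which is part of the standing hypothesis on $K$. The comodule $\fg\otimes V'$ exists simply because $K\cmod$ is monoidal, and the map $\fg\otimes V'\to V$ is a comodule map as the composite of $\mathrm{id}_\fg\otimes\iota$ (a tensor of comodule maps) with the $K$-equivariant action $\fg\otimes V\to V$; injectivity of $\fg\otimes V'\to\fg\otimes V$ is irrelevant for Lemma~\ref{3.2.1}. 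So you may simply delete the sentence invoking flatness of $\fg$.
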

We omit the proofs of the two above assertions since they follow from standard arguments.
\begin{lem}\label{3.2.3}
Let $\fg$ be a finitely generated Lie algebra over $k$. Then the enveloping algebra $U(\fg)$ is left and right Noetherian.
\end{lem}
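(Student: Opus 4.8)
The plan is to compare $U(\fg)$ with its associated graded ring for the standard filtration. Let $U_0\subseteq U_1\subseteq\cdots$ be the filtration of $U(\fg)$ in which $U_n$ is the image of $k\oplus\fg\oplus\fg^{\otimes 2}\oplus\cdots\oplus\fg^{\otimes n}$; it is exhaustive and satisfies $U_iU_j\subseteq U_{i+j}$. Since $xy-yx=[x,y]\in U_1$ for $x,y\in\fg$, the associated graded ring $\mathrm{gr}\,U(\fg)=\bigoplus_{n\geq 0}U_n/U_{n-1}$ is commutative, and the map $\fg\to U_1\to U_1/U_0$ extends to a surjection of commutative $k$-algebras $S(\fg)\twoheadrightarrow\mathrm{gr}\,U(\fg)$. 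I stress that this surjection is all I need; I do \emph{not} invoke the Poincar\'e--Birkhoff--Witt theorem, which over a general base would require extra hypotheses on $\fg$ as a $k$-module.

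The next step is to see that $\mathrm{gr}\,U(\fg)$ is Noetherian. If $x_1,\dots,x_m$ generate $\fg$ as a $k$-module, then $S(\fg)$ is a quotient of the polynomial ring $k[X_1,\dots,X_m]$; since $k$ is Noetherian, the Hilbert basis theorem makes $k[X_1,\dots,X_m]$ Noetherian, hence so are its quotients $S(\fg)$ and $\mathrm{gr}\,U(\fg)$ (left and right, these rings being commutative).

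Finally I would apply the classical transfer principle for filtered rings: if $A$ carries an exhaustive ascending filtration $A_0\subseteq A_1\subseteq\cdots$ with $A_iA_j\subseteq A_{i+j}$ and $\mathrm{gr}\,A$ is left (resp.\ right) Noetherian, then $A$ is left (resp.\ right) Noetherian; one argues by attaching to each left (resp.\ right) ideal of $A$ its ideal of leading terms in $\mathrm{gr}\,A$. Taking $A=U(\fg)$ with the filtration above then yields the lemma on both sides. The argument is otherwise routine; the only delicate point is exactly the one flagged above, namely that one must work with the surjection $S(\fg)\twoheadrightarrow\mathrm{gr}\,U(\fg)$ rather than an isomorphism, and it would be worth giving a precise reference for the filtered-to-graded Noetherian implication.
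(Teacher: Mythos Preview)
Your argument is correct and is precisely the approach the paper takes: the paper's proof is the one-line remark that $U(\fg)$ is a filtered algebra whose associated graded is commutative and generated by $\fg$, which is exactly what you have spelled out in detail. Your explicit observation that only the surjection $S(\fg)\twoheadrightarrow\mathrm{gr}\,U(\fg)$ is needed (not PBW) is a welcome clarification over the paper's terse sketch.
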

\begin{proof}
The assertion follows since the enveloping algebra is by definition a quasi-commutative filtered algebra whose associated graded algebra is generated by $\fg$.
\end{proof}
Recall that a Grothendieck abelian category is said to be locally Noetherian if every object is presented by a filtered colimit of Noetherian objects.
\begin{prop}\label{3.2.4}
Let $k$ be a Noetherian ring, and $(\fg,K)$ be a pair over $k$. If $\fg$ is a finitely generated $k$-module, the category $(\fg,K)\cmod$ is locally Noetherian. Moreover, for a $(\fg,K)$-module $V$, the following conditions are equivalent:
\begin{enumerate}
\renewcommand{\labelenumi}{(\alph{enumi})}
\item $V$ is Noetherian;
\item $V$ is compact;
\item $V$ is finitely generated as a $U(\fg)$-module.
\end{enumerate}
\end{prop}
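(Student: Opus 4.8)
The plan is to build a set of Noetherian generators for $(\fg,K)\cmod$ out of an induction functor, and then to identify the Noetherian objects with the $U(\fg)$-finite ones. The forgetful functor $\cF\colon(\fg,K)\cmod\to(\fk,K)\cmod\cong K\cmod$ has a left adjoint $\ind^\fg_\fk$ (Notation \ref{1.2.8}), and by the construction of the induction its underlying $U(\fg)$-module is $U(\fg)\otimes_{U(\fk)}W$. First I would apply Lemma \ref{2.1.10} to the faithful functor $\cF$: its left adjoint $\ind^\fg_\fk$ carries a family of generators of $K\cmod$ to one of $(\fg,K)\cmod$. By Corollary \ref{2.1.14} the finitely generated $K$-comodules form such a family for $K\cmod$ (and, up to isomorphism, a small set, being exactly the comodules finitely presented over $k$); hence the objects $\ind^\fg_\fk W$, with $W$ ranging over representatives of finitely generated $K$-comodules, generate $(\fg,K)\cmod$.

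Second, I would check these generators are Noetherian. If $W$ is finitely generated over $k$ it is finitely generated over $U(\fk)$, so $\ind^\fg_\fk W\cong U(\fg)\otimes_{U(\fk)}W$ is finitely generated over $U(\fg)$; since $U(\fg)$ is left Noetherian (Lemma \ref{3.2.3}) this module has the ascending chain condition on $U(\fg)$-submodules, and since every $(\fg,K)$-subobject is in particular a $U(\fg)$-submodule (Lemma \ref{3.2.2}), $\ind^\fg_\fk W$ has the ascending chain condition in $(\fg,K)\cmod$. As $(\fg,K)\cmod$ is a Grothendieck category whose filtered colimits are computed on underlying $k$-modules (hence exact), the existence of a set of Noetherian generators makes it locally Noetherian.

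Third, the equivalences. Here (c)$\Rightarrow$(a) is the argument of the previous paragraph applied to $V$ itself. For (a)$\Rightarrow$(c), write $V$ as the filtered union of its finitely generated $K$-subcomodules $W_\alpha$ (Corollary \ref{2.1.14}); then $V$ is the filtered union of the submodules $U(\fg)W_\alpha$, each of which is a $(\fg,K)$-subobject --- it is a $\fg$-submodule and, as the image of the $K$-equivariant map $U(\fg)\otimes W_\alpha\to V$, a $K$-submodule, so Lemma \ref{3.2.2} applies --- and each of which is finitely generated over $U(\fg)$. If $V$ is Noetherian this filtered family of subobjects has a largest member, so $V=U(\fg)W_\alpha$ for some $\alpha$, whence (c). Finally (a)$\Leftrightarrow$(b): in a locally Noetherian Grothendieck category the compact objects coincide with the Noetherian ones (cf.\ \cite{AR}); directly, a Noetherian $V$ is compact since a map from $V$ into a filtered colimit, together with a finite set of $U(\fg)$-generators of $V$ and finitely many relations among them --- all of which lie in compact $K$-subcomodules --- can be pushed to a finite stage of the colimit.

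I expect the step requiring the most care to be (a)$\Rightarrow$(c): one must confirm that the subobjects $U(\fg)W_\alpha$ witnessing the ascending chain condition are genuine $(\fg,K)$-submodules, which is exactly what Lemma \ref{3.2.2} delivers, and that they exhaust $V$, which is Corollary \ref{2.1.14}. The remaining ingredients --- Noetherianity of $U(\fg)$ (Lemma \ref{3.2.3}) and the $U(\fg)$-module shape of $\ind^\fg_\fk$ --- are already available.
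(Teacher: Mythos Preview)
Your proof is correct and follows essentially the same route as the paper: both use Lemma \ref{3.2.3} for (c)$\Rightarrow$(a), the fact that $U(\fg)W$ is a $(\fg,K)$-submodule for a finitely generated $K$-subcomodule $W$ (via Lemma \ref{3.2.2} and Corollary \ref{2.1.14}/Proposition \ref{2.1.8}) for (a)$\Rightarrow$(c), and generalities on locally Noetherian categories for (a)$\Leftrightarrow$(b). The only organizational difference is that you establish local Noetherianity first by exhibiting the Noetherian generators $\ind^\fg_\fk W$ explicitly (via Lemma \ref{2.1.10}), whereas the paper deduces it afterwards from the equivalence (a)$\Leftrightarrow$(c) together with Corollary \ref{2.1.14}; these are two phrasings of the same argument.
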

\begin{proof}
Let $V$ be a $(\fg,K)$-module. From Lemma \ref{3.2.3}, (c) implies (a). Conversely, if $V$ is a Noetherian object, there exists a maximal finitely generated $(\fg,K)$-submodule $V'\subset V$. Choose a finite set $S$ of generators of $V'$. Let $v$ be an arbitrary element of $V$. Then we obtain a $K$-submodule $V_0:=\langle S,v\rangle$ which is finitely generated as a $k$-module (Proposition \ref{2.1.6}). Since the $\fg$-submodule generated by $V_0$ is the image of the map
\[U(\fg)\otimes V_0\to U(\fg)\otimes V\to V,\]
it is a $(\fg,K)$-submodule containing $V'$ (Lemma \ref{3.2.3}). The maximality therefore implies $V=V'$. Hence (c) follows. Moreover, Corollary \ref{2.1.12} then implies that $(\fg,K)\cmod$ is locally Noetherian. The equivalence of (a) and (b) is a consequence of generalities on locally Noetherian abelian categories.
\end{proof}
\begin{defn}\label{3.2.5}
Let $B$ be a bialgebra. An element $v$ of a $B$-comodule $(V,\rho)$ is $B$-invariant if $\rho(v)=v\otimes 1$. We denote the $k$-submodule of invariant elements by $V^B$. In other words, $V^B$ is the equalizer of the coaction $\rho$ and $id_V\otimes 1:V\to V\otimes B$. If $B$ is the coordinate ring of an affine group scheme $K$, we will denote $V^B$ by $H^0(K,V)$.
\end{defn}
\begin{prop}[\cite{MR2015057} I.2.10]\label{3.2.6}
Let $V$ be a $B$-comodule, and $W$ be a $k$-module. Then:
\begin{enumerate}
\renewcommand{\labelenumi}{(\arabic{enumi})}
\item $W$ is a $B$-comodule for $w\mapsto w\otimes 1$. This is called a trivial comodule.
\item There is a natural bijection $\Hom_B(W,V)\cong\Hom_k(W,V^B)$. 
\item We have a natural identification $V^B=\Hom_B(k,V)$.
\end{enumerate}
\end{prop}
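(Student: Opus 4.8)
The plan is to verify all three assertions directly against the definitions of a $B$-comodule, of a $B$-comodule homomorphism, and of the invariants functor $(-)^B$; no auxiliary results are needed. For (1), write $\rho_W\colon W\to W\otimes B$ for the map $w\mapsto w\otimes 1$. Since $B$ is a bialgebra, its comultiplication $\Delta$ and counit $\epsilon$ are $k$-algebra homomorphisms, so in particular $\Delta(1)=1\otimes 1$ and $\epsilon(1)=1$. The counit axiom then reads $(\mathrm{id}_W\otimes\epsilon)\rho_W(w)=\epsilon(1)w=w$, and coassociativity reads $(\rho_W\otimes\mathrm{id}_B)\rho_W(w)=w\otimes 1\otimes 1=(\mathrm{id}_W\otimes\Delta)\rho_W(w)$; both hold by those two identities. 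This construction is evidently functorial in $W$.

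For (2), equip $W$ with the trivial structure of (1) and let $f\colon W\to V$ be a $k$-linear map. By definition $f$ is a comodule homomorphism precisely when $\rho_V\circ f=(f\otimes\mathrm{id}_B)\circ\rho_W$; since the right-hand side sends $w$ to $f(w)\otimes 1$, this condition is equivalent to $\rho_V(f(w))=f(w)\otimes 1$ for every $w\in W$, that is, to $f(w)\in V^B$ for every $w$. Hence comodule homomorphisms $W\to V$ are exactly the $k$-linear maps $W\to V$ that factor through the inclusion $V^B\hookrightarrow V$, and restriction of codomain yields the bijection $\Hom_B(W,V)\cong\Hom_k(W,V^B)$.

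For (3), I would apply (2) with $W=k$ carrying its trivial comodule structure, obtaining $\Hom_B(k,V)\cong\Hom_k(k,V^B)$, and then compose with the canonical isomorphism $\Hom_k(k,V^B)\cong V^B$ given by $g\mapsto g(1)$. Naturality of both bijections in $V$ comes from functoriality of $(-)^B$ together with the fact that a comodule homomorphism $V\to V'$ carries $V^B$ into $(V')^B$; similarly the bijection of (2) is natural in $W$ by precomposition. I do not expect any real obstacle: every step is a one-line check, and the only point that deserves a moment's care is to confirm that the comodule structure on $k$ entering (3) is precisely the trivial one of (1), so that the symbol $\Hom_B(k,V)$ is unambiguous.
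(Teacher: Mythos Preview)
Your proof is correct and essentially coincides with the paper's. The only cosmetic difference is in (1): the paper phrases the trivial coaction as corestriction along the coalgebra homomorphism $k\to B$ (the unit of the bialgebra), whereas you verify the comodule axioms directly using $\Delta(1)=1\otimes1$ and $\epsilon(1)=1$; these are the same fact, and parts (2) and (3) match the paper's argument verbatim.
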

\begin{proof}
Regard $k$ as a coalgebra over $k$. Then $W$ is a comodule over $k$ in the obvious way. Since the given map $k\to B$ is a homomorphism of coalgebras, it induces a coaction of $B$ on $W$ which coincides with (1).

Part (2) is obvious by definition: Every $B$-comodule homomorphism $f:W\to V$ is valued in $V^B$. Then (3) is obtained by applying $W=k$.
\end{proof}
\begin{var}\label{3.2.7}
Let $(\fg,K)$ be a pair over a commutative ring $k$, and $V$ be a $(\fg,K)$-module. Then $H^0(\fg,K,V)$ is naturally identified with the intersection of $H^0(K,V)$ and the $\fg$-invariant part of $V$.
\end{var}
\begin{lem}[\cite{MR2015057} I.2.10]\label{3.2.8}
Let $B$ be a bialgebra, $V$ be a $B$-comodule over a commutative ring $k$, and $k'$ be a flat $k$-algebra. Then we have
\[V^B\otimes k'\cong (V\otimes k')^{B\otimes k'}.\]
\end{lem}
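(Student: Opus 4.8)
The statement to prove is that for a bialgebra $B$ over $k$, a $B$-comodule $V$, and a flat $k$-algebra $k'$, the canonical map $V^B \otimes k' \to (V \otimes k')^{B \otimes k'}$ is an isomorphism. The plan is to realize both sides as kernels of the same linear map after a flat base change, and then invoke exactness of $-\otimes_k k'$.

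First I would recall from Definition \ref{3.2.5} that $V^B$ is by definition the equalizer of the two $k$-linear maps $\rho, \, (\mathrm{id}_V\otimes 1): V \rightrightarrows V\otimes B$, i.e.\ the kernel of $\delta := \rho - \mathrm{id}_V\otimes 1$. Likewise $(V\otimes k')^{B\otimes k'}$ is the kernel of the analogous map $\delta' : V\otimes k' \to (V\otimes k')\otimes_{k'}(B\otimes k')$. The key identification is that, via the canonical isomorphism $(V\otimes k')\otimes_{k'}(B\otimes k') \cong (V\otimes B)\otimes_k k'$ (which also underlies Proposition \ref{2.1.1}(1)), the map $\delta'$ is identified with $\delta \otimes \mathrm{id}_{k'}$; this is immediate from the explicit formulas for the coaction on $V\otimes k'$ given in Proposition \ref{2.1.1}(2) and for $1 \in B\otimes k'$ coming from $k \to k' $. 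So we get a commutative diagram with exact top row
\[
0 \to V^B \to V \overset{\delta}{\to} V\otimes B,
\]
and applying the exact functor $-\otimes_k k'$ (exact because $k'$ is flat) yields an exact sequence
\[
0 \to V^B\otimes k' \to V\otimes k' \overset{\delta\otimes \mathrm{id}}{\longrightarrow} (V\otimes B)\otimes k'.
\]
Hence $V^B \otimes k'$ is the kernel of $\delta\otimes\mathrm{id}_{k'}$, which under the above identification is exactly $\ker \delta' = (V\otimes k')^{B\otimes k'}$, and one checks that the resulting isomorphism is the canonical comparison map.

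There is essentially no serious obstacle here; the only point requiring a little care is the bookkeeping to confirm that the comparison map $V^B\otimes k' \to (V\otimes k')^{B\otimes k'}$ really is the one induced by exactness, i.e.\ that the square relating $\delta'$ to $\delta\otimes\mathrm{id}_{k'}$ commutes on the nose. This is a routine unwinding of the base-change formulas for coactions already set up in Proposition \ref{2.1.1}, so I would state it as such and not belabor it. (The hypothesis that $B$ be flat over $k$ is not needed for this argument — only flatness of $k'$ over $k$ — though in applications $B$ will be the flat coordinate ring of an affine group scheme.)
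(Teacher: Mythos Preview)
Your proof is correct and follows exactly the approach indicated in the paper: the paper's proof is the single sentence ``Think of $V^B$ as $\Ker(\rho-id_V\otimes 1:V\to V\otimes B)$,'' and your proposal is precisely the natural unpacking of that hint via flatness of $k'$.
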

\begin{proof}
Think of $V^B$ as $\Ker(\rho-id_V\otimes 1:V\to V\otimes B)$.
\end{proof}
\begin{cor}\label{3.2.9}
Let $K$ be a flat affine group scheme, and $V,V'$ be $K$-modules. Then there is a canonical isomorphism $H^0(K,F(V,V'))\cong\Hom_K(V,V')$.
\end{cor}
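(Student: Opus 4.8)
The plan is to deduce the isomorphism formally from two ingredients already at our disposal: the closed symmetric monoidal structure on $K\cmod$ (Notation \ref{2.2.4}), and the identification of the $K$-invariants of a comodule with the $k$-module of $K$-homomorphisms out of the trivial comodule $k$ (Proposition \ref{3.2.6} (3)). Since $K$ is flat, $\cO(K)$ is a flat coalgebra, so $K\cmod\cong\cO(K)\comod$ is genuinely closed symmetric monoidal and $F(V,V')$ makes sense for all $K$-modules $V,V'$.

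First I would use that $k$ is the unit object of $K\cmod$, so that for any $K$-module $V$ there is a canonical isomorphism $k\otimes V\cong V$ of $K$-modules. Applying the internal-hom adjunction $\Hom_K(-\otimes V,V')\cong\Hom_K(-,F(V,V'))$ with the first variable set to $k$ then gives a natural chain of bijections
\[\Hom_K(V,V')\cong\Hom_K(k\otimes V,V')\cong\Hom_K(k,F(V,V')).\]
Next I would invoke Proposition \ref{3.2.6} (3) applied to the $K$-module $F(V,V')$, which identifies $\Hom_K(k,F(V,V'))$ with $H^0(K,F(V,V'))$. Composing the two identifications yields the desired isomorphism, and tracing through the constructions shows it is the evident one: a $K$-homomorphism $f\colon V\to V'$ corresponds to the $K$-invariant element of $F(V,V')$ it represents, and conversely an invariant $\phi\in F(V,V')$ corresponds to the evaluation map $v\mapsto\phi(v)$.

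There is essentially no serious obstacle here; the argument is purely formal once the closed structure is in place. The only point to state carefully is that the internal hom and its adjunction used above are the general ones for comodule categories over the flat coalgebra $\cO(K)$ (as in the proof of Proposition \ref{2.2.3}), rather than the partial construction of Proposition \ref{2.2.2}, which carries finiteness hypotheses on $V$ that we do not want to impose here.
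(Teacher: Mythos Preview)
Your proof is correct and follows exactly the same route as the paper: identify $H^0(K,F(V,V'))$ with $\Hom_K(k,F(V,V'))$ via Proposition~\ref{3.2.6}~(3), then use the internal-hom adjunction together with $k\otimes V\cong V$ to obtain $\Hom_K(V,V')$. Your additional remark distinguishing the general closed structure from the finitely-presented case of Proposition~\ref{2.2.2} is a worthwhile clarification, but the argument itself is the paper's.
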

\begin{proof}
It follows from the natural identification
\[H^0(K,F(V,V'))=\Hom_K(k,F(V,V'))\cong\Hom_K(V,V').\]
\end{proof}
\begin{cor}\label{3.2.10}
Let $K$ be a flat affine group scheme, and $Q$ be a representation of $K$. Suppose that $Q$ is finitely presented as a $k$-module. Then $\Hom_K(Q,-)$ satisfies the flat base change formula: For any flat $k$-algebra $k'$, there is a canonical isomorphism
\[\Hom_K(Q,-)\otimes k'\cong\Hom_{K\otimes k'}(Q\otimes k',-\otimes k').\]
\end{cor}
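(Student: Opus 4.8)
The plan is to factor $\Hom_K(Q,-)$ as the composite of the internal Hom functor $F(Q,-)$ on $K\cmod$ with the invariants functor $H^0(K,-)$, and to base change each factor separately. Since $Q$ is finitely presented, it satisfies Condition~\ref{1.1.3} by Example~\ref{1.1.4}; hence Proposition~\ref{2.2.2}~(2) supplies a natural $K$-action on $F(Q,V')=\Hom_k(Q,V')$ for every $K$-module $V'$, and Corollary~\ref{3.2.9} gives a natural identification $\Hom_K(Q,V')\cong H^0(K,F(Q,V'))$. The same reasoning applies over $k'$: the module $Q\otimes k'$ is finitely presented over $k'$ (tensor a finite presentation of $Q$ by $k'$) and $K\otimes k'$ is flat over $k'$, so $\Hom_{K\otimes k'}(Q\otimes k',V'\otimes k')\cong H^0(K\otimes k',F_{k'}(Q\otimes k',V'\otimes k'))$, where $F_{k'}$ denotes the internal Hom of $(K\otimes k')\cmod$.

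The first substantive step is a base change formula for the internal Hom itself: a natural isomorphism of $K\otimes k'$-modules $F(Q,V')\otimes_k k'\cong F_{k'}(Q\otimes k',V'\otimes k')$. On the level of $k'$-modules this is the classical fact that $\Hom$ out of a finitely presented module commutes with flat base change; one applies $\Hom_k(-,V')$ to a finite presentation $k^m\to k^n\to Q\to 0$, tensors the resulting left exact sequence with the flat algebra $k'$, and compares with the analogous left exact sequence over $k'$ coming from the base-changed presentation. The point is to check that this $k'$-linear isomorphism intertwines the $K\otimes k'$-actions, and I would do this by unwinding the explicit formula $(\nu(g)f)(v)=\nu_{V'}(g)f(\nu_Q(g^{-1})v)$ for the action on internal Homs from the proof of Proposition~\ref{2.2.2}, together with the compatibilities with restriction and flat base change recorded before Proposition~\ref{3.1.1}; these show that the two group actions are given by the same formula once one passes to points over flat algebras, so the $k'$-linear comparison map is automatically equivariant (using Lemma~\ref{2.2.1} to restrict attention to flat algebras). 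I expect this bookkeeping — tracking the coaction through the base change — to be the main obstacle, everything else being an assembly of results already in place.

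With that in hand, I would invoke the invariants base change: Lemma~\ref{3.2.8}, applied to the bialgebra $\cO(K)$, the $\cO(K)$-comodule $F(Q,V')$, and the flat $k$-algebra $k'$, yields $H^0(K,F(Q,V'))\otimes k'\cong H^0(K\otimes k',F(Q,V')\otimes k')$, where Proposition~\ref{2.1.1} is used to identify $F(Q,V')\otimes k'$ as an $\cO(K\otimes k')=\cO(K)\otimes k'$-comodule. Concatenating the isomorphisms gives
\[
\Hom_K(Q,V')\otimes k'\cong H^0(K,F(Q,V'))\otimes k'\cong H^0(K\otimes k',F(Q,V')\otimes k')
\]
\[
\cong H^0(K\otimes k',F_{k'}(Q\otimes k',V'\otimes k'))\cong\Hom_{K\otimes k'}(Q\otimes k',V'\otimes k').
\]
Since each isomorphism in the chain is natural in $V'$, the composite is the asserted natural isomorphism $\Hom_K(Q,-)\otimes k'\cong\Hom_{K\otimes k'}(Q\otimes k',-\otimes k')$.
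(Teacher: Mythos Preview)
Your proposal is correct and follows essentially the same approach as the paper: factor $\Hom_K(Q,-)$ as $H^0(K,\Hom_k(Q,-))$ via Corollary~\ref{3.2.9} and Proposition~\ref{2.2.2}~(2), then base change the invariants by Lemma~\ref{3.2.8} and the internal Hom by the finite-presentation hypothesis. The paper's proof is just a terser version of your chain of isomorphisms, omitting the equivariance bookkeeping you spell out.
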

\begin{proof}
It is immediate from Corollary \ref{3.2.9}, Proposition \ref{2.2.2} (2), and Lemma \ref{3.2.8}.
\end{proof}
\begin{var}\label{3.2.11}
Let $K$ be a flat affine group scheme over a commutative ring $k$, and $k'$ be a $k$-algebra which is finitely generated and projective as a $k$-module. Then we have a natural isomorphism
\[\Hom_K(-,-)\otimes k'\cong\Hom_{K\otimes k'}(-\otimes k',-\otimes k')\]
on $K\cmod^{op}\times K\cmod$.
\end{var}
\begin{proof}
Replacing $k'$ by a finitely generated and projective $k$-module $W$, we may prove
\[\Hom_K(-,-)\otimes W\cong\Hom_K(-,-\otimes W).\]
Here $W$ is regarded as a trivial $K$-module. It reduces to the cases where $W$ is free of finite rank by passing to retracts. Then the assertion follows since $\Hom_K(-,-)$ is additive in the second variable.
\end{proof}
\begin{proof}[Proof of Theorem \ref{3.1.6}]
Let $S$ be the collection of objects $X$ of $(\fg,K)\cmod$ such that $\Hom_{\fg,K}(X,-)$ satisfies the flat base change formula. Recall that $(\fg,K)\cmod$ is a compactly generated category whose compact objects are the finitely generated $(\fg,K)$-modules (Proposition \ref{3.2.4}). Since $S$ is closed under formation of finite colimits, it will suffice to show $\ind^\fg_\fk Q\in S$, where $Q$ is a $K$-module which is finitely generated as a $k$-module (Corollary \ref{2.1.12}, Lemma \ref{2.1.8}, Lemma \ref{2.1.11}). For any $(\fg,K)$-module $W$, we have
\[\begin{split}
\Hom_{\fg,K}(\ind^\fg_\fk Q,W)\otimes k'
&\cong \Hom_K(Q,W)\otimes k'\\
&\cong \Hom_{K\otimes k'}(Q\otimes k',W\otimes k')\\
&\cong\Hom_K(Q,\Res^k_{k'}(W\otimes k'))\\
&\cong \Hom_{\fg,K}(\ind^\fg_\fk Q,\Res^k_{k'}(W\otimes k'))\\
&\cong\Hom_{\fg\otimes k',K\otimes k'}(\ind^\fg_\fk Q\otimes k',W\otimes k')
\end{split}\]
(see Corollary \ref{3.2.10}). This completes the proof.
\end{proof}
\begin{proof}[Proof of Theorem \ref{3.1.7}]
On the other hand, according to Lemma \ref{2.1.8}, Proposition \ref{3.2.4}, and Definition \ref{2.1.7} (d), it will suffice to show that for any finitely generated $(\fg,K)$-module $V$ and a $(\fq,M)$-module $W$, the $k'$-homomorphism induced from $\iota$ in Construction \ref{3.1.5}
\[\Hom_{\fg\otimes k',K\otimes k'}(V\otimes k',I^{\fg,K}_{\fq,M}(W)\otimes k')
\to\Hom_{\fg\otimes k',K\otimes k'}(V\otimes k',I^{\fg\otimes k',K\otimes k'}_{\fq\otimes k',M\otimes k'}(W\otimes k'))\]
is a bijection.

On the other hand, notice that the assumption (i) implies that the forgetful functor $\cF^{\fq,M}_{\fg,K}$ respects compact objects. We therefore have a bijection
\[\begin{split}
\Hom_{\fg\otimes k',K\otimes k'}(V\otimes k',I^{\fg,K}_{\fq,M}(W)\otimes k')
&\cong\Hom_{\fg,K}(V,I^{\fg,K}_{\fq,M}(W))\otimes k'\\
&\cong\Hom_{\fq,M}(V,W)\otimes k'\\
&\cong\Hom_{\fq\otimes k',M\otimes k'}(V\otimes k',W\otimes k')\\
&\cong\Hom_{\fg\otimes k',K\otimes k'}(V\otimes k',I^{\fg\otimes k',K\otimes k'}_{\fq\otimes k',M\otimes k'}(W\otimes k')).
\end{split}\]
The assertion is reduced to showing that these two arrows coincide.

Observe that the adjunction of $(\cF^{\fq\otimes k',M\otimes k'}_{\fg\otimes k',K\otimes k'},I^{\fg\otimes k',K\otimes k'}_{\fq\otimes k',M\otimes k'})$ is $k'$-linear since so is $\cF^{\fq\otimes k',M\otimes k'}_{\fg\otimes k',K\otimes k'}$, and the adjunctions are described by units and counits which are $k'$-homomorphisms by definition. Therefore the bijection
\[\Hom_{\fq\otimes k',M\otimes k'}(V\otimes k',W\otimes k')\\
\cong\Hom_{\fg\otimes k',K\otimes k'}(V\otimes k',I^{\fg\otimes k',K\otimes k'}_{\fq\otimes k',M\otimes k'}(W\otimes k'))\]
is $k'$-linear. It implies that the sequence of bijections above is $k'$-linear. Hence we may restrict the maps along
\[\Hom_{\fg,K}(V,I^{\fg,K}_{\fq,M}(W))\to\Hom_{\fg\otimes k',K\otimes k'}(V\otimes k',I^{\fg,K}_{\fq,M}(W)\otimes k').\]
In this case, for $f\in\Hom_{\fg,K}(V,I^{\fg,K}_{\fq,M}(W))$, $f\otimes 1\in\Hom_{\fg\otimes k',K\otimes k'}(V\otimes k',I^{\fg,K}_{\fq,M}(W)\otimes k')$ goes to the element in
\[\Hom_{\fg\otimes k',K\otimes k'}(V\otimes k',I^{\fg\otimes k',K\otimes k'}_{\fq\otimes k',M\otimes k'}(W\otimes k'))\cong\Hom_{\fg,K}(V,I^{\fg,K}_{\fq,M}(\Res^k_{k'}(W\otimes k')))\]
described as
\[V\overset{f}{\to}I^{\fg,K}_{\fq,M}(W)\to I^{\fg,K}_{\fq,M}(\Res^k_{k'}(W\otimes k')).\]
This coincide with $\iota\circ (f\otimes 1)$ by definition of $\iota$. This completes the proof.
\end{proof}
Notice that for a finitely generated and projective $k$-module $W$, the functor $-\otimes W$ respects small limits of $k$-modules. Hence similar arguments work in the finite setting:
\begin{var}\label{3.2.12}
Let $(\fg,K)$ be a pair over a commutative ring $k$, and $k\to k'$ be a ring homomorphism. Assume that $k'$ is finitely generated and projective as a $k$-module. Then we have an isomorphism 
\[\Hom_{\fg,K}(-,-)\otimes k'\cong \Hom_{\fg\otimes k',K\otimes k'}(-\otimes k',-\otimes k')\]
on $(\fg,K)\cmod^{op}\times(\fg,K)\cmod$.
\end{var}
\begin{var}\label{3.2.13}
Let $(\fq,M)\to (\fg,K)$ be a map of pairs over a commutative ring $k$, and $k\to k'$ be a ring homomorphism. Assume that $k'$ is finitely generated and projective as a $k$-module. Then the map
\[\iota:(I^{\fg,K}_{\fq,M}-)\otimes_k k'\to I^{\fg\otimes_k k',K\otimes_k k'}_{\fq\otimes_k k',M\otimes_k k'}(-\otimes_k k')\]
is an isomorphism.
\end{var}
To prove the derived base change theorems, we need to deal with injective and acyclic objects. Recall that if we are given a Grothendieck abelian category $\cA$ and its family $C$ of generators, an object $X\in\cA$ is injective if and only if it has a right lifting property with respect to monomorphisms to members of $C$. In particular, if $\cA$ is locally Noetherian, the following conditions are equivalent:
\begin{enumerate}
\renewcommand{\labelenumi}{(\alph{enumi})}
\item $X$ is injective;
\item $X$ has a right lifting property with respect to monomorphisms to members of $C$;
\item $X$ has a right lifting property with respect to monomorphisms between Noetherian objects.
\end{enumerate}
\begin{lem}\label{3.2.14}
Let $(\fg,K)$ be a pair over a Noetherian ring $k$, and $k'$ be a flat $k$-algebra. Suppose that $\fg$ is finitely generated over $k$. If $I$ is an injective $(\fg,K)$-module, so is $\Res^k_{k'}( I\otimes k')$.
\end{lem}
\begin{proof}
This is an immediate consequence of Theorem \ref{3.1.6}. In fact, for every injective homomorphism $A\to B$ of $(\fg,K)$-modules,
we have
\[\begin{split}
\Hom_{\fg,K}(B,\Res^k_{k'} (I\otimes k'))
&\cong\Hom_{\fg,K}(B,I)\otimes k'\\
&\twoheadrightarrow\Hom_{\fg,K}(A,I)\otimes k'\\
&\cong \Hom_{\fg,K}(A,\Res^k_{k'} (I\otimes k')).
\end{split}\]
\end{proof}
\begin{proof}[Proof of Theorem \ref{3.1.9}]
For a finitely generated $(\fg,K)$-module $X$, and a complex $I$ concentrated in nonnegative degrees of injective $(\fg,K)$-modules, we have
\[\begin{split}
\bR\Hom_{\fg,K}(X,I)\otimes k'
&\simeq\Hom_{\fg,K}(X,I)\otimes k'\\
&\cong\Hom_{\fg,K}(X,I\otimes k')\\
&=\bR\Hom_{\fg,K}(X,I\otimes k').
\end{split}\]
The general case is deduced by passing to shifts and finite colimits in $\Coh(\fg,K)$. This completes the proof.
\end{proof}
\begin{proof}[Proof of Theorem \ref{3.1.10}]
Let $I^\bullet$ be a complex bounded below of injective $(\fg,K)$-modules. Since $\Res^k_{k'}$ is exact and conservative on $(\fg\otimes k',K\otimes k')\cmod$, so it is on $D(\fg\otimes k',K\otimes k')$. Hence each $I^n\otimes k'$ is $I^{\fg\otimes k',K\otimes k'}_{\fq\otimes k',M\otimes k'}$-acyclic (Corollary \ref{3.1.4} and Lemma \ref{3.2.14}). Corollary \ref{3.1.7} now implies
\[\begin{split}
(\bR I^{\fg,K}_{\fq,M}I^\bullet)\otimes_k k'
&= I^{\fg,K}_{\fq,M}(I^\bullet)\otimes k'\\
&\cong I^{\fg\otimes k',K\otimes k'}_{\fq\otimes k',M\otimes k'}(I^\bullet\otimes k')\\
&\simeq\bR I^{\fg\otimes k',K\otimes k'}_{\fq\otimes k',M\otimes k'}(I^\bullet\otimes_k k').
\end{split}\]
This completes the proof.
\end{proof}
Variant \ref{G} (3) is deduced from the following finite variant of Lemma \ref{3.2.14}:
\begin{lem}\label{3.2.15}
Let $(\fg,K)$ be a pair over a commutative ring, and $Q$ be a $(\fg,K)$-module which is finitely generated and projective as a $k$-module. Then $-\otimes Q$ respects injectively fibrant complexes of $(\fg,K)$-modules (see \cite{1606.04320}).
\end{lem}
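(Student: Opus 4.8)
The plan is to exhibit $-\otimes Q$ as the right adjoint of an \emph{exact} endofunctor of complexes of $(\fg,K)$-modules, and then to invoke the formal principle that a right adjoint of a left Quillen functor preserves fibrant objects.

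The first step is to check that $Q$ is dualizable in the closed symmetric monoidal category $(\fg,K)\cmod$ of Proposition \ref{2.2.3}, with dual $Q^c=F(Q,k)$. Concretely, I would show that the canonical comparison map $Q^c\otimes V\to F(Q,V)$ is an isomorphism of $(\fg,K)$-modules for every $(\fg,K)$-module $V$: on underlying $k$-modules it is the standard isomorphism $\Hom_k(Q,k)\otimes V\cong\Hom_k(Q,V)$, which holds because $Q$ is finitely generated and projective over $k$, and it is a natural transformation, so by the description of the closed structure in Proposition \ref{2.2.3} --- its compatibility with that of $K\cmod$ together with the induced $\fg$-action --- it is an isomorphism in $(\fg,K)\cmod$. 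Since $Q^{cc}\cong Q$ for the same reason, the adjunction $-\otimes Q\dashv F(Q,-)$ and its analogue for $Q^c$ combine into
\[-\otimes Q^c\ \dashv\ -\otimes Q\ \dashv\ -\otimes Q^c ,\]
so in particular $-\otimes Q$ is the right adjoint of $-\otimes Q^c$.

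The second step is to observe that $-\otimes Q^c$ is exact: $Q^c$ is again finitely generated projective, hence flat, over $k$; the forgetful functor $(\fg,K)\cmod\to k\cmod$ is faithful exact and carries the internal tensor product to $\otimes_k$; so exactness of $-\otimes Q^c$ on $(\fg,K)\cmod$ follows from exactness of $-\otimes_k Q^c$ on $k$-modules. Applying $-\otimes Q^c$ and $-\otimes Q$ degreewise extends the adjunction to complexes. Being exact, $-\otimes Q^c$ preserves monomorphisms of complexes and, via the natural isomorphism $H^n(C\otimes Q^c)\cong H^n(C)\otimes Q^c$, quasi-isomorphisms; hence it preserves cofibrations and trivial cofibrations for the injective model structure of \cite{H2}, i.e.\ it is left Quillen, and its right adjoint $-\otimes Q$ is right Quillen and preserves fibrant objects. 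Avoiding model-categorical language, one can argue directly: given an acyclic monomorphism $A\to B$ of complexes and a map $A\to Y\otimes Q$ with $Y$ injectively fibrant, transpose it to $A\otimes Q^c\to Y$; as $A\otimes Q^c\to B\otimes Q^c$ is again an acyclic monomorphism it extends to a map $B\otimes Q^c\to Y$, and transposing back yields the desired lift $B\to Y\otimes Q$.

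The only genuinely nonformal ingredient is the dualizability of $Q$ in the first step, which is where finite generation and projectivity of $Q$ over $k$ are used; everything after that is a manipulation of adjunctions. Accordingly, the main point to be careful about is that the identifications $F(Q,-)\cong-\otimes Q^c$ and $Q^{cc}\cong Q$ hold at the level of $(\fg,K)$-modules and not merely of $K$-modules or of $k$-modules; this I would extract from the naturality of the comparison maps together with the construction of the closed structure in Proposition \ref{2.2.3}. I do not anticipate any further obstacle.
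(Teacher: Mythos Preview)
Your proposal is correct and follows essentially the same route as the paper: establish the adjunction $-\otimes Q^c\dashv -\otimes Q$ via dualizability of $Q$ (using $Q\cong Q^{cc}$ and $F(Q,-)\cong -\otimes Q^c$), then observe that $Q^c=\Hom_k(Q,k)$ is flat so that the left adjoint preserves acyclic monomorphisms. The paper compresses this into the single chain $\Hom_{\fg,K}(-,-\otimes Q)\cong\Hom_{\fg,K}(-\otimes\Hom_k(Q,k),-)$ and then invokes flatness of $\Hom_k(Q,k)$, but the content is the same.
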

\begin{proof}
We have a canonical isomorphism $Q\cong \Hom_k(\Hom_k(Q,k),k)$ of $(\fg,K)$-modules (see Proposition \ref{2.2.2}, Proposition \ref{2.2.3}). Hence we have a natural isomorphism
\[\begin{split}
\Hom_{\fg,K}(-,-\otimes Q)
&\cong\Hom_{\fg,K}(-,-\otimes \Hom_k(\Hom_k(Q,k),k))\\
&\cong\Hom_{\fg,K}(-,\Hom_k(\Hom_k(Q,k),-))\\
&\cong\Hom_{\fg,K}(-\otimes\Hom_k(Q,k),-).
\end{split}\]
The assertion now follows since $\Hom_k(Q,k)$ is flat as a $k$-module.
\end{proof}
\subsection{The unbounded derived version}
In this section, we replace $D(\fg,K)$ by another $\infty$-category to establish a generalization of Theorem \ref{3.1.10}. Regard $D(\fg,K)$ as the derived $\infty$-category, and set $\Ind\Coh(\fg,K)$ as the ind-completion of $\Coh(\fg,K)$ in the sense of \cite{MR2522659}.
Let $k\to k'$ be a flat ring homomorphism of Noetherian rings, and $(\fq,M)\to(\fg,K)$ be a map of pairs over $k$. Suppose that the following conditions are satisfied:
\begin{enumerate}
\renewcommand{\labelenumi}{(\roman{enumi})}
\item $\fk\oplus\fq\to\fg$ is surjective.
\item $\fq$ and $\fg$ are finitely generated as $k$-modules.
\end{enumerate}
\begin{lem}\label{3.3.1}
The functors
\[-\otimes k':D(\fg,K)\to D(\fg\otimes k',K\otimes k')\]
\[-\otimes k':D(\fq,M)\to D(\fq\otimes k',M\otimes k')\]
\[\cF^{\fq,M}_{\fg,K}:D(\fg,K)\to D(\fq,M)\]
respect coherent objects. In particular, they extend to left adjoint functors
\[-\otimes k':\Ind\Coh(\fg,K)\to \Ind\Coh(\fg\otimes k',K\otimes k')\]
\[-\otimes k':\Ind\Coh(\fq,M)\to \Ind\Coh(\fq\otimes k',M\otimes k')\]
\[\cF^{\fq,M}_{\fg,K}:\Ind\Coh(\fg,K)\to \Ind\Coh(\fq,M).\]
\end{lem}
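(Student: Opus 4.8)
The plan is to prove the three assertions in two stages: first show that each functor preserves coherent objects, and then invoke the universal property of ind-completion to extend. For the statement that the functors preserve coherence, recall from Definition \ref{3.1.8} that $\Coh(\fg,K)$ consists of cohomologically bounded complexes with finitely generated cohomology, and by Proposition \ref{3.2.4} finitely generated objects coincide with Noetherian (equivalently, compact) objects. So it suffices to check that each functor sends finitely generated modules to objects with finitely generated cohomology. For $-\otimes k'$: since $k\to k'$ is flat, $-\otimes k'$ is exact, hence computes on the level of cohomology modules, and if $V$ is finitely generated over $U(\fg)$ then $V\otimes k'$ is finitely generated over $U(\fg)\otimes k'\cong U(\fg\otimes k')$ (using that $U$ commutes with base change); boundedness is obviously preserved since the functor is exact. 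For $\cF^{\fq,M}_{\fg,K}$: exactness is clear (it is a forgetful functor), so again we only need that a finitely generated $U(\fg)$-module $V$ is finitely generated over $U(\fq)$. This is exactly where hypothesis (i), the surjectivity of $\fk\oplus\fq\to\fg$, enters — it is the same point invoked in the proof of Theorem \ref{3.1.7}, namely that $\cF^{\fq,M}_{\fg,K}$ respects compact objects. Concretely, one argues that $V$ is already finitely generated over the subalgebra generated by $\fq$ and the image of $U(\fk)$, and then absorbs the $U(\fk)$-part using that finitely generated $K$-modules are finitely generated as $k$-modules (Proposition \ref{2.1.8}); together with Lemma \ref{3.2.3} this gives finite generation over $U(\fq)$. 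Boundedness is automatic.

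For the extension to ind-completions, I would appeal directly to the universal property: by \cite{L1}, for a small $\infty$-category $\cC$ with finite colimits, $\Ind(\cC)$ is freely generated under filtered colimits, and any functor $\cC\to\cD$ extends uniquely to a colimit-preserving (hence left adjoint, when $\cD$ is presentable) functor $\Ind(\cC)\to\Ind(\cD')$ once it lands in $\cC'\subseteq\cD$ with $\cD'=\Ind(\cC')$. Thus from $-\otimes k':\Coh(\fg,K)\to\Coh(\fg\otimes k',K\otimes k')$ and $\cF^{\fq,M}_{\fg,K}:\Coh(\fg,K)\to\Coh(\fq,M)$ we obtain the stated functors on the ind-categories, and they are left adjoints because the target $\infty$-categories are presentable (being ind-completions of small $\infty$-categories) and the functors preserve all small colimits, so the adjoint functor theorem applies.

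The main obstacle is the coherence-preservation for the forgetful functor $\cF^{\fq,M}_{\fg,K}$: one must carefully verify that a $(\fg,K)$-module that is finitely generated over $U(\fg)$ remains finitely generated over $U(\fq)$, given only that $\fk+\fq$ spans $\fg$. The subtlety is that $U(\fg)$ is generated over $U(\fq)$ by (the image of) $U(\fk)$, which is typically infinite-dimensional, so one cannot naively count generators; the resolution is that the relevant $K$-submodule generated by a finite generating set is finitely generated as a $k$-module by Proposition \ref{2.1.8}, and $U(\fq)$ acting on this finite $k$-module still produces the whole thing. Everything else — exactness of the base-change and forgetful functors, preservation of boundedness, the formal extension to ind-categories — is routine once this point is secured.
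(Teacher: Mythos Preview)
Your proposal is correct and matches the paper's approach exactly; the paper's entire proof reads ``By definition. For $\cF^{\fq,M}_{\fg,K}$, use (i),'' and you have simply unpacked what this means. In particular, your identification of hypothesis (i) as the crux for the forgetful functor, and your sketch of why it forces finite generation over $U(\fq)$ (via Proposition \ref{2.1.8} and the $\fk$-stability of the $K$-span of a finite generating set), is precisely the content the paper leaves implicit---indeed, it is the same mechanism already invoked in the proof of Theorem \ref{3.1.7} when it asserts that $\cF^{\fq,M}_{\fg,K}$ respects compact objects.
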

\begin{proof}
It follows by definition. For $\cF^{\fq,M}_{\fg,K}$, use (i).
\end{proof}
Let us denote the resulting right adjoint functors as
\[\Res^{k,\ind}_{k'}:\Ind\Coh(\fg\otimes k',K\otimes k')\to\Ind\Coh(\fg,K)\]
\[\Res^{k,\ind}_{k'}:\Ind\Coh(\fq\otimes k',M\otimes k')\to\Ind\Coh(\fq,M)\]
\[I_{\fq,M}^{\fg,K,\ind}:\Ind\Coh(\fq,M)\to \Ind\Coh(\fg,K).\]
\begin{rem}[The second adjoint functor]\label{3.3.2}
Since $\cF^{\fq,M}_{\fg,K}$ is a proper left adjoint functor between compactly generated stable $\infty$-categories, $I_{\fq,M}^{\fg,K,\ind}$ admits a right adjoint functor (\cite{MR2522659} Corollary 5.5.2.9 (1)).
\end{rem}
To see the relation of our new right adjoint functors with the classical derived functors, recall that the standard $t$-structure on $D(\fg,K)$ descends to $\Coh(\fg,K)$, and then extends to $\Ind\Coh(\fg,K)$.
\begin{lem}\label{3.3.3}
\begin{enumerate}
\renewcommand{\labelenumi}{(\arabic{enumi})}
\item The functors
\[-\otimes k':\Ind\Coh(\fg,K)\to \Ind\Coh(\fg\otimes k',K\otimes k')\]
\[-\otimes k':\Ind\Coh(\fq,M)\to \Ind\Coh(\fq\otimes k',M\otimes k')\]
\[\cF^{\fq,M}_{\fg,K}:\Ind\Coh(\fg,K)\to \Ind\Coh(\fq,M)\]
are $t$-exact.
\item The functors
\[\Res^{k,\ind}_{k'}:\Ind\Coh(\fg\otimes k',K\otimes k')\to\Ind\Coh(\fg,K)\]
\[\Res^{k,\ind}_{k'}:\Ind\Coh(\fq\otimes k',M\otimes k')\to\Ind\Coh(\fq,M)\]
\[I_{\fq,M}^{\fg,K,\ind}:\Ind\Coh(\fq,M)\to \Ind\Coh(\fg,K)\]
are left $t$-exact.
\end{enumerate}
In particular, the adjunctions restrict to the eventually coconnective part.
\end{lem}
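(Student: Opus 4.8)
The plan is to reduce the assertions to their nearly tautological counterparts on $\Coh$, using the standard description of the ind-extended $t$-structure. On the unbounded derived categories, $-\otimes k'$ is $t$-exact because $k'$ is flat over $k$, so that $-\otimes_k k'$ is exact on the module categories and preserves acyclic complexes; and $\cF^{\fq,M}_{\fg,K}$ is $t$-exact because it does not change the underlying complex of $k$-modules. By Lemma \ref{3.3.1} each of the three functors carries $\Coh$ into $\Coh$, and since the $t$-structure on $\Coh(\fg,K)$ is the restriction of the one on $D(\fg,K)$, all three are $t$-exact as functors between the coherent categories.

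Next I would invoke the structure of the ind-extended $t$-structure (see \cite{L1}, \cite{Kr}): the truncation functors on $\Ind\Coh(\fg,K)$ commute with filtered colimits, and hence $\Ind\Coh(\fg,K)^{\leq n}$, resp.\ $\Ind\Coh(\fg,K)^{\geq n}$, is the smallest full subcategory containing $\Coh(\fg,K)^{\leq n}$, resp.\ $\Coh(\fg,K)^{\geq n}$, and closed under filtered colimits. Indeed, writing a coconnective object $X$ as a filtered colimit $\colim X_i$ of coherent objects, the identity $X=\tau^{\geq n}X=\colim\tau^{\geq n}X_i$ exhibits $X$ as a filtered colimit of objects of $\Coh(\fg,K)^{\geq n}$ (note $\Coh$ is stable under truncation, since $(\fg,K)\cmod$ is locally Noetherian), while conversely any such colimit is coconnective because its $\tau^{\leq n-1}$ is a filtered colimit of zero objects; the connective case is similar. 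Granting this, part (1) follows at once: each functor $F$ in question restricts on $\Coh$ to the $t$-exact functor of the first paragraph and preserves filtered colimits (Lemma \ref{3.3.1}), so the full subcategory of those $X$ with $F(X)$ of the prescribed amplitude contains $\Coh(\fg,K)^{\leq n}$, resp.\ $\Coh(\fg,K)^{\geq n}$, and is closed under filtered colimits, hence is everything; that is, $F$ is $t$-exact.

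Part (2) is then formal. The functor $\Res^{k,\ind}_{k'}$ is the right adjoint of the ($t$-exact, in particular right $t$-exact) functor $-\otimes k'$, and $I^{\fg,K,\ind}_{\fq,M}$ is the right adjoint of the right $t$-exact functor $\cF^{\fq,M}_{\fg,K}$; since a right adjoint of a right $t$-exact functor is left $t$-exact, the left $t$-exactness claims follow. For the last assertion, a $t$-exact functor preserves $\Ind\Coh(\fg,K)^{+}=\bigcup_n\Ind\Coh(\fg,K)^{\geq -n}$, while a left $t$-exact exact functor sends $\Ind\Coh^{\geq n}$ into $\Ind\Coh^{\geq n}$ for every $n$ and therefore also preserves the eventually coconnective part; hence all four functors, and both adjunctions, restrict to the eventually coconnective subcategories.

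The only genuine input, and the step I expect to take the most care, is the identification in the second paragraph of $\Ind\Coh(\fg,K)^{\geq n}$ with the filtered-colimit closure of $\Coh(\fg,K)^{\geq n}$. Right $t$-exactness of an ind-extension of a $t$-exact functor is automatic, but left $t$-exactness genuinely uses that the ind-extended $t$-structure is compatible with filtered colimits; I would therefore isolate this compatibility as a short preliminary lemma, citing the general theory of $t$-structures on ind-completions of small stable $\infty$-categories carrying a bounded $t$-structure, and only then run the argument above.
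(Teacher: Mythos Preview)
Your proposal is correct and follows essentially the same approach as the paper's proof, which simply asserts that (1) holds because the corresponding functors on $\Coh$ are $t$-exact and that (2) is ``immediate from the generalities on $t$-structures.'' You have spelled out in more detail the passage from $t$-exactness on $\Coh$ to $t$-exactness on $\Ind\Coh$ via compatibility of the ind-extended $t$-structure with filtered colimits, which the paper leaves implicit.
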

\begin{proof}
Part (1) follows since 
\[-\otimes k':\Coh(\fg,K)\to \Coh(\fg\otimes k',K\otimes k')\]
\[-\otimes k':\Coh(\fq,M)\to \Coh(\fq\otimes k',M\otimes k')\]
\[\cF^{\fq,M}_{\fg,K}:\Coh(\fg,K)\to \Coh(\fq,M)\]
are $t$-exact. Then (2) is immediate from the generalities on $t$-structures.
\end{proof}
Recall that for a stable $\infty$-category $\cC$ with a coherent $t$-structure, there is a canonical equivalence $\Ind\Coh(\cC)^+\simeq\cC^+$ (\cite{MR3730514} Proposition 6.3.2). If we restrict the diagram
\[\xymatrix{
\Ind\Coh(\fg,K)\ar[rr]^{-\otimes k'}\ar[d]&&\Ind\Coh(\fg\otimes k',K\otimes k')\ar[d]\\
D(\fg,K)\ar[rr]_{-\otimes k'}&&D(\fg\otimes k',K\otimes k')
}\]
to the eventually coconnective part, the vertical arrows are equivalences. Passing to the right adjoint, we conclude that $\Res^{k,\ind}_{k'}$ coincides with $\Res^k_{k'}$ on $D(\fg\otimes k',K\otimes k')^+$ and $D(\fq\otimes k',M\otimes k')^+$ under the identification. Similarly, we have $I^{\fg,K,\ind}_{\fq,M}|_{D(\fq,M)^+}\simeq \bR I^{\fg,K}_{\fq,M}|_{D(\fq,M)^+}$.
\begin{thm}\label{3.3.4}
The comparison map $\iota:I^{\fg,K,\ind}_{\fq,M}(-)\otimes k'\to I^{\fg\otimes k',K\otimes k',\ind}_{\fq\otimes k',M\otimes k'}(-\otimes k')$ is a natural isomorphism. Moreover, it restricts to the natural isomorphism $\bR I^{\fg,K}_{\fq,M}(-)\otimes k'\simeq \bR I^{\fg\otimes k',K\otimes k'}_{\fq\otimes k',M\otimes k'}(-\otimes k')$ of Theorem \ref{3.1.10} under the identifications
\[\Ind\Coh(\fq,M)^+\simeq D(\fq,M)^+\]
\[\Ind\Coh(\fg\otimes k',K\otimes k')^+\simeq D(\fg\otimes k',K\otimes k')^+\]
\end{thm}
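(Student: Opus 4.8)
The plan is to reduce the equivalence to coherent objects, where it becomes Theorem~\ref{3.1.10}, after checking that all the functors in play commute with filtered colimits; the ``moreover'' clause is then a matter of matching up the relevant adjunction units and counits.

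First I would show that both functors
\[I^{\fg,K,\ind}_{\fq,M}(-)\otimes k'\qquad\text{and}\qquad I^{\fg\otimes k',K\otimes k',\ind}_{\fq\otimes k',M\otimes k'}(-\otimes k')\]
commute with filtered colimits. The base change functors $-\otimes k'$ are left adjoints on the ind-completions (Lemma~\ref{3.3.1}) and hence preserve all colimits. The functor $\cF^{\fq,M}_{\fg,K}\colon\Ind\Coh(\fg,K)\to\Ind\Coh(\fq,M)$ preserves compact (i.e.\ coherent) objects by condition~(i) (Lemma~\ref{3.3.1}); since $\Ind\Coh(\fg,K)$ and $\Ind\Coh(\fq,M)$ are compactly generated, its right adjoint $I^{\fg,K,\ind}_{\fq,M}$ commutes with filtered colimits --- for $c$ coherent one has $\colim_\alpha\Map(c,I^{\fg,K,\ind}_{\fq,M}Y_\alpha)\simeq\colim_\alpha\Map(\cF^{\fq,M}_{\fg,K}c,Y_\alpha)\simeq\Map(\cF^{\fq,M}_{\fg,K}c,\colim_\alpha Y_\alpha)\simeq\Map(c,I^{\fg,K,\ind}_{\fq,M}\colim_\alpha Y_\alpha)$, and coherent objects generate --- and likewise over $k'$. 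Therefore the full subcategory of $\Ind\Coh(\fq,M)$ on which $\iota$ is an equivalence is closed under filtered colimits, and since $\Coh(\fq,M)$ generates $\Ind\Coh(\fq,M)$ under filtered colimits it suffices to prove $\iota_X$ is an equivalence for $X\in\Coh(\fq,M)$.

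So fix $X\in\Coh(\fq,M)$. It is eventually coconnective, and so is $X\otimes k'$, which is again coherent by Lemma~\ref{3.3.1}. By Lemma~\ref{3.3.3} the functors $I^{\fg,K,\ind}_{\fq,M}$ and $I^{\fg\otimes k',K\otimes k',\ind}_{\fq\otimes k',M\otimes k'}$ are left $t$-exact and the functors $-\otimes k'$ are $t$-exact, so both the source and the target of $\iota_X$ lie in $\Ind\Coh(\fg\otimes k',K\otimes k')^+$. Under the equivalences $\Ind\Coh(-)^+\simeq D(-)^+$ and the identifications $I^{\fg,K,\ind}_{\fq,M}|_{D(\fq,M)^+}\simeq\bR I^{\fg,K}_{\fq,M}|_{D(\fq,M)^+}$ and $\Res^{k,\ind}_{k'}|^+\simeq\Res^k_{k'}|^+$ (together with their $k'$-counterparts) recorded before the statement, $\iota_X$ becomes the derived comparison map of Theorem~\ref{3.1.10} evaluated at $X\in D(\fq,M)^+$, and hence is an equivalence. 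This proves the first assertion, and with it the ``moreover'' clause --- provided we know the ind-theoretic $\iota$ really does restrict, on the eventually coconnective part, to the derived $\iota$ of Construction~\ref{3.1.5}.

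To establish that compatibility, note that $\iota$ in the ind-setting is assembled exactly as in Construction~\ref{3.1.5}: apply $I^{\fg,K,\ind}_{\fq,M}$ to the unit of $-\otimes k'\dashv\Res^{k,\ind}_{k'}$, compose with the base change equivalence $I^{\fg,K,\ind}_{\fq,M}\circ\Res^{k,\ind}_{k'}\simeq\Res^{k,\ind}_{k'}\circ I^{\fg\otimes k',K\otimes k',\ind}_{\fq\otimes k',M\otimes k'}$ --- obtained by passing to right adjoints in the identity $(-\otimes k')\circ\cF^{\fq,M}_{\fg,K}\simeq\cF^{\fq\otimes k',M\otimes k'}_{\fg\otimes k',K\otimes k'}\circ(-\otimes k')$ --- and transpose along $-\otimes k'\dashv\Res^{k,\ind}_{k'}$. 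Each of these adjunctions restricts on the eventually coconnective part to its classical counterpart, and the base change equivalence restricts to the derived form of Corollary~\ref{3.1.4}; since $\iota$ is built from nothing but these units, counits, and the base change $2$-isomorphism, its restriction is the classical $\iota$. The step requiring the most care is precisely this one: keeping the four adjunctions ($-\otimes k'\dashv\Res^{k,\ind}_{k'}$ and $\cF\dashv I^{\ind}$, each over $k$ and over $k'$) and the base change isomorphism coherent through the equivalences $\Ind\Coh(-)^+\simeq D(-)^+$, so that the two comparison transformations provably coincide. Granted Lemma~\ref{3.3.1}, Lemma~\ref{3.3.3}, and Theorem~\ref{3.1.10}, everything else is formal.
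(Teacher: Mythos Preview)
Your proposal is correct and follows essentially the same line as the paper's own proof: reduce to $\Coh(\fq,M)$ by continuity of the functors, then identify $\iota$ on the eventually coconnective part with the derived comparison map of Theorem~\ref{3.1.10} via the equivalences $\Ind\Coh(-)^+\simeq D(-)^+$ and the compatibility of the adjunctions. The paper's proof is much terser---it simply asserts continuity and compatibility of Construction~\ref{3.1.5} with the ind-setting---whereas you spell out why $I^{\fg,K,\ind}_{\fq,M}$ preserves filtered colimits (via compactness of $\cF^{\fq,M}_{\fg,K}c$) and unpack the matching of units, counits, and the base change $2$-isomorphism; but the structure of the argument is the same.
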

\begin{proof}
Since the functors are continuous, we may prove the natural isomorphism on $\Coh(\fq,M)$. Then the assertion is reduced to Theorem \ref{3.1.10} since Construction \ref{3.1.5} is compatible with our ind-setting under the equivalences of the type $\Ind\Coh(\cC)^+\simeq\cC^+$ (recall the compatibility of the adjunctions of $-\otimes k'$ and $\cF$ in the two settings from Lemma \ref{3.3.1} and the argument below there).
\end{proof}
Finally, suppose that $k$ is a field of characteristic 0, $(\fg,K)$ be a pair with $K$ reductive and $\dim \fg<+\infty$.
\begin{prop}\label{3.3.5}
The embedding $\Coh(\fg,K)\to D(\fg,K)$ induces an equivalence $\Ind\Coh(\fg,K)\simeq D(\fg,K)$.
\end{prop}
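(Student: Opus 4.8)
The plan is to establish the sharper statement $D(\fg,K)\simeq\Ind\Coh(\fg,K)$ by showing that $D(\fg,K)$ is a compactly generated stable $\infty$-category whose full subcategory of compact objects is exactly $\Coh(\fg,K)$. Granting this, the conclusion is formal: for any compactly generated stable $\infty$-category $\cC$ the canonical functor $\Ind(\cC^\omega)\to\cC$ out of the ind-completion of the subcategory $\cC^\omega$ of compact objects is an equivalence (\cite{L1}), and applied to $\cC=D(\fg,K)$ this produces an equivalence $\Ind\Coh(\fg,K)\simeq D(\fg,K)$ which, by construction, extends the embedding $\Coh(\fg,K)\hookrightarrow D(\fg,K)$.

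For the compact generators I would take the objects $\ind^\fg_\fk Q$ with $Q$ a finite-dimensional representation of $K$; these form a small set, as $K$ is (linearly) reductive over a field of characteristic $0$. First one checks that $\ind^\fg_\fk Q$ is a \emph{projective} object of $(\fg,K)\cmod$: by adjunction $\Hom_{\fg,K}(\ind^\fg_\fk Q,-)\cong\Hom_K(Q,-)$, composed with the exact forgetful functor to $K\cmod$, and $\Hom_K(Q,-)$ is exact because $K\cmod$ is semisimple. Hence for $X\in D(\fg,K)$ one gets $\Hom_{D(\fg,K)}(\ind^\fg_\fk Q,X)\cong\Hom_K(Q,H^0(X))$; since cohomology commutes with coproducts in a Grothendieck abelian category and $Q$ is a compact object of $K\cmod$, this functor commutes with coproducts, so $\ind^\fg_\fk Q$ is compact in $D(\fg,K)$. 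These objects generate: if $\Hom_{D(\fg,K)}(\ind^\fg_\fk Q[n],X)=0$ for all $n\in\bZ$ and all such $Q$, then $\Hom_K(Q,H^n(X))=0$ for every $n$, forcing $H^n(X)=0$ because a nonzero $K$-module contains a nonzero simple submodule (use Corollary \ref{2.1.14} together with semisimplicity). Therefore $D(\fg,K)$ is compactly generated by $\{\ind^\fg_\fk Q\}$.

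It then remains to identify the compact objects. The compact objects of a compactly generated stable $\infty$-category form the idempotent completion of the thick (idempotent-complete stable) subcategory generated by any set of compact generators, so $D(\fg,K)^\omega$ is the smallest thick subcategory containing the $\ind^\fg_\fk Q$. That this subcategory lies inside $\Coh(\fg,K)$ is clear: $\Coh(\fg,K)$ contains each $\ind^\fg_\fk Q$ and is thick (over the field $k$, retracts of finite-dimensional modules are finite-dimensional, and cofibers of maps in $\Coh(\fg,K)$ remain cohomologically bounded with finite-dimensional cohomology by the long exact sequence and Proposition \ref{3.2.4}). For the converse the key input is that $(\fg,K)\cmod$ has finite global dimension $d<\infty$ when $K$ is reductive and $k$ has characteristic $0$ with $\dim\fg<\infty$; this comes from relative homological algebra, e.g.\ the relative Koszul (standard) resolution (cf.\ \cite{KV}). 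Given a finitely generated $M$, a generating finite-dimensional $K$-submodule gives a surjection $\ind^\fg_\fk Q_0\twoheadrightarrow M$ whose kernel is again finitely generated (Proposition \ref{3.2.4}); iterating $d$ times yields an exact sequence
\[0\to\Omega\to\ind^\fg_\fk Q_{d-1}\to\cdots\to\ind^\fg_\fk Q_0\to M\to 0\]
with the $Q_i$ finite-dimensional. Since the $\ind^\fg_\fk Q_i$ are projective and the global dimension is $\le d$, the module $\Omega$ is projective, hence (being finitely generated) a retract of some $\ind^\fg_\fk Q$; thus $M$, assembled from $\Omega$ and the $\ind^\fg_\fk Q_i$ by finitely many cofiber sequences, lies in the thick subcategory generated by the $\ind^\fg_\fk Q$. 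A final finite d\'evissage along the two-sided bounded Postnikov tower shows every object of $\Coh(\fg,K)$ does as well. Hence $D(\fg,K)^\omega=\Coh(\fg,K)$ and $D(\fg,K)\simeq\Ind\Coh(\fg,K)$.

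The only non-formal ingredient is the finiteness of the global dimension of $(\fg,K)\cmod$, i.e.\ the construction of a finite-length projective resolution by modules $\ind^\fg_\fk Q$; everything else combines the local Noetherianity of Proposition \ref{3.2.4} with standard facts about compactly generated stable $\infty$-categories. I expect adapting the classical relative Koszul resolution (where $\fk\subseteq\fg$) to the abstract pair setting of this paper to be the main point requiring care.
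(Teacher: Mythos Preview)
Your proposal is correct and follows essentially the same route as the paper: show that the objects $\ind^\fg_\fk Q$ with $Q$ finite-dimensional are projective (hence compact) generators of $D(\fg,K)$, invoke the finite global dimension coming from the standard (relative Koszul) resolution to produce bounded resolutions of finitely generated modules by such objects, and conclude that $\Coh(\fg,K)$ consists of compact generators so that $\Ind\Coh(\fg,K)\simeq D(\fg,K)$. Your write-up is more explicit than the paper's (you spell out compactness of the generators, the generation argument, and the identification $D(\fg,K)^\omega=\Coh(\fg,K)$, whereas the paper only checks $\Coh(\fg,K)\subset D(\fg,K)^\omega$ and generation, which already suffices since $\Coh(\fg,K)$ is idempotent complete over a field), but the substance is the same, and your closing remark correctly isolates the one non-formal input.
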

\begin{proof}
If we are given an arbitrary finitely generated $(\fg,K)$-module $V$, there is a finite dimensional $K$-submodule $V_0$ such that the induced homomorphism $U(\fg)\otimes_{U(\fk)}V_0\to V$ is surjective. Since its kernel is also finitely generated, we can repeat this procedure to obtain a resolution of $V$ by finitely generated and projective $(\fg,K)$-modules. According to the existence of the standard projective resolution (\cite{1604.04253} 1.4.4), the category $(\fg,K)\cmod$ has a finite homological dimension. In particular, we may assume the resolution to be bounded by truncations. Moreover, it implies that $V$ is compact in the $\infty$-category $D(\fg,K)$. Passing to shifts and finite colimits, we can conclude that every coherent complex is compact in $D(\fg,K)$. Since $\Coh(\fg,K)$ generates $D(\fg,K)$ under colimits, the equivalence follows (\cite{MR2522659} Proposition 5.3.5.11, Proposition 5.5.1.9).
\end{proof}
\section{Variants for $\pro$}
\subsection{Computation of $\pro$}
\begin{lem}\label{4.1.1}
Let $K$ be a flat affine group scheme over $k$, and $\{V_\cO\}_\cO$ be a set of $K$-modules. Suppose that for any finitely generated $K$-module $Q$, $\Hom_K(Q,V_\cO)$ vanishes for all but finitely many indices $\cO$. Then the direct sum $\oplus V_\cO$ also exhibits a product of $\{V_\cO\}$ in $K\cmod$.
\end{lem}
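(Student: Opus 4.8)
The plan is to verify the universal property of the product directly. I will show that for every $K$-module $W$ the canonical comparison map
\[\Phi\colon\Hom_K\Bigl(W,\oplus_\cO V_\cO\Bigr)\longrightarrow\prod_\cO\Hom_K(W,V_\cO),\qquad f\longmapsto(p_\cO\circ f)_\cO,\]
where $p_\cO\colon\oplus_\cO V_\cO\to V_\cO$ denotes the projection, is a bijection; this is precisely the statement that $\oplus_\cO V_\cO$ together with the $p_\cO$ is a product of the $V_\cO$ in $K\cmod$. Injectivity of $\Phi$ is immediate: an element of $\oplus_\cO V_\cO$ vanishes as soon as all of its components do, so $p_\cO\circ f=0$ for every $\cO$ forces $f=0$.

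For surjectivity, fix a family $(f_\cO)_\cO$ with $f_\cO\in\Hom_K(W,V_\cO)$; the only candidate preimage is the $k$-linear map sending $w$ to $(f_\cO(w))_\cO$, and the crux is to check that this tuple lies in the direct sum rather than merely in the product. Given $w\in W$, consider the smallest subcomodule $\langle w\rangle\subset W$ containing $w$ (Proposition \ref{2.1.6}); it is a finitely generated $K$-module, being generated by the single element $w$ (cf.\ Proposition \ref{2.1.8}). By hypothesis $\Hom_K(\langle w\rangle,V_\cO)=0$ for all but finitely many $\cO$, and for such $\cO$ the restriction $f_\cO|_{\langle w\rangle}$, hence $f_\cO(w)$, vanishes. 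Thus $(f_\cO(w))_\cO\in\oplus_\cO V_\cO$, and we obtain a $k$-linear map $f\colon W\to\oplus_\cO V_\cO$ with $p_\cO\circ f=f_\cO$ for all $\cO$.

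It remains to see that $f$ respects the coactions of the coordinate ring $C$ of $K$. Since $\bigl(\oplus_\cO V_\cO\bigr)\otimes C\cong\oplus_\cO(V_\cO\otimes C)$ and the maps $p_\cO\otimes\mathrm{id}_C$ jointly detect $0$ on this direct sum, the difference $\rho_{\oplus_\cO V_\cO}\circ f-(f\otimes\mathrm{id}_C)\circ\rho_W$ vanishes, because its $\cO$-component is $\rho_{V_\cO}\circ f_\cO-(f_\cO\otimes\mathrm{id}_C)\circ\rho_W=0$, each $f_\cO$ being a comodule map. (Alternatively one can argue locally: the restriction of $f$ to each $\langle w\rangle$ factors through a \emph{finite} partial sum $\oplus_{\cO\in F}V_\cO$, which is a finite biproduct and hence a product in $K\cmod$, so $f|_{\langle w\rangle}$ is a comodule map since its projections $f_\cO|_{\langle w\rangle}$ are; as $W$ is the union of its cyclic subcomodules, $f$ is then a comodule map as well.) Either way $\Phi(f)=(f_\cO)_\cO$, so $\Phi$ is surjective.

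I expect the only real obstacle to be the middle step, placing the tuple $(f_\cO(w))_\cO$ in the direct sum: this is exactly where the finiteness hypothesis on $\Hom_K(Q,-)$ is used, via the cyclic subcomodule $\langle w\rangle$. Injectivity of $\Phi$ and the $K$-equivariance check are routine comodule bookkeeping.
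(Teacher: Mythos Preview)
Your proof is correct. The paper's argument is shorter and more categorical: rather than verifying the universal property against an arbitrary $W$, it tests only against finitely generated (equivalently compact) $K$-modules $Q$, where one has
\[\Hom_K(Q,\oplus_\cO V_\cO)\cong\oplus_\cO\Hom_K(Q,V_\cO)\cong\prod_\cO\Hom_K(Q,V_\cO),\]
the first isomorphism by compactness of $Q$ and the second by the finiteness hypothesis; since such $Q$ form a family of generators (Corollary~\ref{2.1.14}), the criterion of Definition~\ref{2.1.9}~(d) finishes. Your approach unpacks this into an element-level verification: you prove surjectivity of $\Phi$ for \emph{every} $W$ by passing to the cyclic subcomodule $\langle w\rangle$, which plays the role of the compact test object, and then you check comodule compatibility by hand. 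Both routes rest on the same local finiteness of comodules (Proposition~\ref{2.1.8}); the paper's version avoids constructing the inverse explicitly, while yours makes the mechanism completely transparent and does not need to invoke compactness of $Q$ to pull $\Hom_K(Q,-)$ through the direct sum. One small remark: your justification ``being generated by the single element $w$'' is really shorthand for the content of Proposition~\ref{2.1.8} (that $\langle w\rangle$ sits inside a finitely generated $k$-module), which is what the hypothesis of the lemma actually requires of $Q$.
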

\begin{proof}
It is obvious since we have a bijection for any finitely generated $K$-module $Q$
\[\Hom_K(Q,\oplus V_\cO)
\cong\oplus\Hom_K(Q,V_\cO)\cong\prod\Hom_K(Q,V_\cO).\]
The second one follows from the assumption on $\{V_\cO\}$. Since such $Q$ form a family of generators, the assertion follows (Corollary \ref{2.1.12}, Definition \ref{2.1.7}).
\end{proof}
We give a characterization of the assumption above in practical settings.
\begin{prop}\label{4.1.2}
Let $k$ be a Noetherian domain, $K$ be a flat affine group scheme over a Noetherian ring $k$, and $V=\oplus V_\cO$ be a direct sum of $K$-modules.
\begin{enumerate}
\renewcommand{\labelenumi}{(\arabic{enumi})}
\item If $V$ is torsion-free, and $V\otimes\Frac(k)$ is admissible then for any finitely generated $K$-module $Q$,
$\Hom_K(Q, V_\cO)$ vanishes for all but finitely many indices $\cO$.
\item If for any finitely generated $K$-module $Q$, $\Hom_K(Q,V)$ is finitely generated then $V\otimes\Frac(k)$ is admissible.
\item Suppose that each of $V_\cO$ is finitely generated. If for a finitely generated $K$-module $Q$, $\Hom_K(Q,V_\cO)$ vanishes for all but finitely many indices $\cO$ then $\Hom_K(Q,V)$ is finitely generated.
\end{enumerate}
\end{prop}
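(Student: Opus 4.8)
The plan is to run all three parts through the flat base change isomorphism $\Hom_K(Q,-)\otimes\Frac(k)\cong\Hom_{K\otimes\Frac(k)}(Q\otimes\Frac(k),-\otimes\Frac(k))$ of Corollary \ref{3.2.10}, which applies because a finitely generated $K$-module $Q$ is finitely presented as a $k$-module (Corollary \ref{2.1.14}) and $\Frac(k)$ is flat over the domain $k$. I will also use that such a $Q$ is compact in $K\cmod$ (again Corollary \ref{2.1.14}), so that $\Hom_K(Q,-)$ carries a direct sum $\bigoplus_\cO V_\cO$ to $\bigoplus_\cO\Hom_K(Q,V_\cO)$ (write the direct sum as the filtered colimit of its finite subsums), and likewise over $\Frac(k)$. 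Finally I record the elementary point that a subcomodule $V_\cO$ of a torsion-free module $V$ is torsion-free, and that then $\Hom_k(Q,V_\cO)$ is torsion-free for any $Q$ (if $a\varphi=0$ with $a\in k$ nonzero then $a\varphi(q)=0$, hence $\varphi(q)=0$), so $\Hom_K(Q,V_\cO)\subseteq\Hom_k(Q,V_\cO)$ is torsion-free as well.

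For part (1), I would fix a finitely generated $K$-module $Q$; compactness of $Q\otimes\Frac(k)$ over the field $\Frac(k)$ and admissibility of $V\otimes\Frac(k)$ give that
\[\bigoplus_\cO\Hom_{K\otimes\Frac(k)}(Q\otimes\Frac(k),V_\cO\otimes\Frac(k))\cong\Hom_{K\otimes\Frac(k)}(Q\otimes\Frac(k),V\otimes\Frac(k))\]
is finite-dimensional, so all but finitely many summands vanish; for those $\cO$, flat base change gives $\Hom_K(Q,V_\cO)\otimes\Frac(k)=0$, and torsion-freeness then forces $\Hom_K(Q,V_\cO)=0$. For part (3), compactness gives $\Hom_K(Q,V)\cong\bigoplus_\cO\Hom_K(Q,V_\cO)$, and the hypothesis leaves only finitely many nonzero terms; each survivor is a submodule of $\Hom_k(Q,V_\cO)$, which is finitely generated over the Noetherian ring $k$ since $Q$ and $V_\cO$ are, so the finite direct sum $\Hom_K(Q,V)$ is finitely generated.

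Part (2) is the real content. Here I must bound $\dim_{\Frac(k)}\Hom_{K\otimes\Frac(k)}(Q',V\otimes\Frac(k))$ for every finite-dimensional $K\otimes\Frac(k)$-module $Q'$ (that this suffices for admissibility, rather than treating only irreducible $Q'$, follows by a composition series argument). The plan is to reduce this to the descent claim that every such $Q'$ is isomorphic to $Q\otimes\Frac(k)$ for some finitely generated $K$-module $Q$ over $k$: granting the claim, Corollary \ref{3.2.10} gives $\Hom_K(Q,V)\otimes\Frac(k)\cong\Hom_{K\otimes\Frac(k)}(Q',V\otimes\Frac(k))$, and the left side is finite-dimensional over $\Frac(k)$ because $\Hom_K(Q,V)$ is finitely generated over $k$ by hypothesis.

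To prove the descent claim I would embed $Q'$ into a finite direct sum of copies of the regular representation: the counit axiom exhibits the coaction $Q'\to Q'\otimes_{\Frac(k)}\cO(K\otimes\Frac(k))$ as a comodule embedding into the cofree comodule, which is isomorphic to $\cO(K\otimes\Frac(k))^{\oplus n}=(\cO(K)\otimes\Frac(k))^{\oplus n}$ with $n=\dim_{\Frac(k)}Q'$. Writing $\cO(K)$ as the filtered union of its finitely generated subcomodules $C_\alpha$ (Corollary \ref{2.1.14}; note $\cO(K)$ is torsion-free since $K$ is flat), compactness of $Q'$ puts its image inside $(C_\alpha\otimes\Frac(k))^{\oplus n}=(C_\alpha^{\oplus n})\otimes\Frac(k)$ for some $\alpha$; setting $C=C_\alpha^{\oplus n}$ and $Q=Q'\cap C$ inside $C\otimes\Frac(k)$ (meaningful since $C$ is torsion-free), the module $Q$ is finitely generated over $k$ as a submodule of the finitely generated $C$ over the Noetherian ring $k$, and localizing at $k\setminus\{0\}$ gives $Q\otimes\Frac(k)=Q'\cap(C\otimes\Frac(k))=Q'$. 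The step I expect to be the main obstacle is checking that $Q=Q'\cap C$ is genuinely a subcomodule of $C$; this comes down to the identity $(Q'\otimes_k\cO(K))\cap(C\otimes_k\cO(K))=(Q'\cap C)\otimes_k\cO(K)$ inside $(C\otimes\Frac(k))\otimes_k\cO(K)$, which holds precisely because $\cO(K)$ is flat over $k$, and this is exactly where the flatness of $K$ enters. Everything else is formal.
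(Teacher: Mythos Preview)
Your arguments for parts (1) and (3) are correct and match the paper's proof essentially line for line: both use compactness of $Q$ to split $\Hom_K(Q,-)$ across the direct sum, then appeal to flat base change to $\Frac(k)$ together with torsion-freeness (for (1)) or the Noetherian property (for (3)).

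For part (2) the two proofs diverge. The paper does not descend an arbitrary finite-dimensional $Q'$ to $k$; instead it observes (via Proposition \ref{2.1.1}, Corollary \ref{2.1.14}, and implicitly Lemma \ref{2.1.10} and Lemma \ref{2.1.13}) that the objects $Q\otimes\Frac(k)$ with $Q$ a finitely generated $K$-module already form a family of compact generators of $K\otimes\Frac(k)\cmod$, so every finite-dimensional $Q'$ is a \emph{finite colimit} of such objects, and finite-dimensionality of $\Hom_{K\otimes\Frac(k)}(-,V\otimes\Frac(k))$ then propagates through the corresponding finite limit. Your approach instead establishes the sharper statement that each $Q'$ is literally isomorphic to some $Q\otimes\Frac(k)$, via the lattice construction $Q=Q'\cap C$ inside a finitely generated torsion-free $C$ with $Q'\subset C\otimes\Frac(k)$; the comodule-stability of this intersection is correctly deduced from flatness of $\cO(K)$ (intersections of submodules commute with tensoring by a flat module), and $Q\otimes\Frac(k)=Q'$ follows from exactness of localization. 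Your route is longer but self-contained and yields a stronger intermediate conclusion; the paper's route is shorter precisely because the categorical machinery on compact generators has already been assembled for the proof of Theorem \ref{3.1.6}.
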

\begin{proof}
To see (1), consider a sequence for a finitely generated $K$-module $Q$
\[\begin{split}
\Hom_K(Q,V)
&=\oplus\Hom_K(Q,V_\cO)\\
&\subset\oplus\Hom_{K\otimes\Frac(k)}(Q\otimes\Frac(k),V_\cO\otimes\Frac(k))\\
&\cong\Hom_{K\otimes\Frac(k)}(Q\otimes\Frac(k),V\otimes\Frac(k)).
\end{split}\]
Since $V\otimes\Frac(k)$ is admissible, $\Hom_K(Q,V_\cO\otimes\Frac(k))$ vanishes for all but finitely many $\cO$. Since $V_\cO$ are torsion-free, the submodules $\Hom_K(Q,V_\cO)$ vanish for almost all $\cO$.

Part (2) follows from the flat base change theorem: For any finitely generated $K$-module $Q$, we have
\[\dim\Hom_{K\otimes\Frac(k)}(Q\otimes\Frac(k),V\otimes\Frac(k))
=\dim\Hom_K(Q,V)\otimes\Frac(k)<+\infty.\]
Since finite dimensional representations of $K\otimes\Frac(k)$-modules are generated by representations $Q\otimes\Frac(k)$ under finite colimits (Corollary \ref{2.1.12}), $V\otimes\Frac(k)$ is admissible.

Finally, suppose that $V_\cO$ are finitely generated. Then for a finitely generated $K$-module $Q$, $\Hom_K(Q,V)$ is isomorphic to a direct sum of $\Hom_K(Q,V_\cO)$ along finitely many indices $\cO$. Since $V_\cO$ is finitely generated, so is $\Hom_K(Q,V)$. This completes the proof.
\end{proof}
\begin{prop}\label{4.1.3}
Let $(\fq,M)\to(\fg,M)$ be an injective map of pairs over a Noetherian ring $k$, and $Z$ be a $(\fq,M)$-module. Suppose that the map $M\to M$ is the identity. Moreover, assume the following conditions:
\begin{enumerate}
\renewcommand{\labelenumi}{(\roman{enumi})}
\item For $x\in\fg$, we have $\left[x,x\right]=0$.
\item There is an $M$-equivariant Lie subalgebra $\bar{\fu}\subset\fg$ such that the summation map $\fq\oplus\bar{\fu}\to\fg$ is an isomorphism of $k$-modules.
\item There are free bases of $\fq$ and $\bar{\fu}$.
\item The enveloping algebra $U(\bar{\fu})$ is decomposed into a direct sum $U(\bar{\fu})=\oplus_\cO U(\bar{\fu})_\cO$ of $M$-submodules $U(\bar{\fu})_\cO$ which are finitely generated as $k$-modules.
\item For any finitely generated $M$-module $Q$, $\Hom_{M}(Q,\Hom(U(\bar{\fu})_\cO,Z))$ vanishes for all but finitely many $\cO$.
\end{enumerate}
Then we have an isomorphism as an $M$-module
\[\pro^\fg_{\fq}(Z)\cong\oplus_\cO\Hom_k(U(\bar{\fu})_\cO,Z).\]
In particular, a base change formula along a ring homomorphism $k\to k'$ between Noetherian rings
\[\pro^\fg_\fq(Z)\otimes k'\cong\pro^{\fg\otimes k'}_{\fq\otimes k'}(Z\otimes k')\]
is valid in the following cases:
\begin{enumerate}
\renewcommand{\labelenumi}{(\alph{enumi})}
\item $k\to k'$ is flat.
\item For any finitely generated $M\otimes k'$-module $Q$, $\Hom_{M\otimes k'}(Q,\Hom(U(\bar{\fu})_\cO\otimes k',Z\otimes k'))$ vanishes for all but finitely many $\cO$.
\end{enumerate}
\end{prop}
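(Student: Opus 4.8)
The plan is to compute $\pro^\fg_\fq(Z)$ by unwinding the defining adjunction and exploiting the PBW-type decomposition coming from the splitting $\fg\cong\fq\oplus\bar{\fu}$. First I would use condition (ii) together with the freeness in (iii) to observe that, as a $k$-module (in fact as an $M$-module via condition (ii)), $U(\fg)$ is free over $U(\fq)$ with basis given by ordered monomials in the chosen basis of $\bar{\fu}$; concretely the multiplication map $U(\fq)\otimes U(\bar{\fu})\to U(\fg)$ is an isomorphism of right $U(\fq)$-modules and of $M$-modules. Here I will need to be a little careful: $\bar{\fu}$ is asked only to be an $M$-equivariant Lie \emph{subalgebra} (not an ideal), so the decomposition is of $U(\fg)$ as a right $U(\fq)$-module, which is exactly what is needed to describe $\Hom_{\fq}(U(\fg),Z)\cong\Hom_k(U(\bar{\fu}),Z)$. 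Then for any $(\fg,M)$-module $X$, writing out
\[
\Hom_{\fg,M}(X,\pro^\fg_\fq Z)\cong\Hom_{\fq,M}(\cF X,Z)
\]
and comparing with the analogous description for $\Hom_k(U(\bar{\fu}),Z)$ with its natural $(\fg,M)$-structure (using Proposition \ref{2.2.2} and Proposition \ref{2.2.3} to make sense of the $M$-action, and condition (i) to guarantee the would-be $\fg$-action squares correctly), I would identify $\pro^\fg_\fq Z$ with $\Hom_k(U(\bar{\fu}),Z)$ as a $(\fg,M)$-module, in particular as an $M$-module. This is essentially the imitation of \cite{KV} Proposition 5.96 advertised in the introduction.

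Next I would pass from the product description to the direct sum. By condition (iv), $U(\bar{\fu})\cong\oplus_\cO U(\bar{\fu})_\cO$ as $M$-modules with each summand finitely generated over $k$, so $\Hom_k(U(\bar{\fu}),Z)\cong\prod_\cO\Hom_k(U(\bar{\fu})_\cO,Z)$ as $M$-modules. Condition (v) is precisely the hypothesis of Lemma \ref{4.1.1} applied to the family $\{\Hom_k(U(\bar{\fu})_\cO,Z)\}_\cO$, so this product agrees with the direct sum $\oplus_\cO\Hom_k(U(\bar{\fu})_\cO,Z)$ inside $M\cmod$. Combining the two identifications gives $\pro^\fg_\fq(Z)\cong\oplus_\cO\Hom_k(U(\bar{\fu})_\cO,Z)$ as an $M$-module, which is the first assertion.

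For the base change formula, I would first note that forming $U(\bar{\fu})$, the decomposition in (iv), and the isomorphism $U(\fq)\otimes U(\bar{\fu})\xrightarrow{\sim}U(\fg)$ are all compatible with $-\otimes k'$, since $\bar{\fu}$ is free over $k$ and bases go to bases; hence $(\fg\otimes k',M\otimes k')$ again satisfies (i)--(iv) with $\bar{\fu}\otimes k'$ and $U(\bar{\fu})_\cO\otimes k'$, and the first part of the proposition applies over $k'$ once we know the analog of (v). In case (b) that analog is assumed outright. In case (a), where $k\to k'$ is flat, I would derive it from Theorem \ref{3.1.6}: for a finitely generated $M\otimes k'$-module $Q$, flat base change lets me compare $\Hom_{M\otimes k'}(Q,\Hom(U(\bar{\fu})_\cO\otimes k',Z\otimes k'))$ with $\Hom_M(-,\Hom(U(\bar{\fu})_\cO,Z))\otimes k'$ for a finitely generated $M$-module mapping onto $Q$ (using Corollary \ref{2.1.14} and that $\cF$ preserves finitely generated objects), so vanishing for almost all $\cO$ is inherited from (v). It then remains to check that $\Hom_k(U(\bar{\fu})_\cO,Z)\otimes k'\cong\Hom_{k'}(U(\bar{\fu})_\cO\otimes k',Z\otimes k')$; this holds because each $U(\bar{\fu})_\cO$ is finitely generated (so, over a Noetherian ring, finitely presented) and satisfies Condition \ref{1.1.3} when $k'$ is flat, and in case (b) one instead uses that the direct sum is finite after applying $\Hom_{M\otimes k'}(Q,-)$ so the relevant $\Hom_k$ commutes with $-\otimes k'$ on finitely generated modules. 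Putting these together and commuting $-\otimes k'$ past the (locally finite) direct sum yields $\pro^\fg_\fq(Z)\otimes k'\cong\pro^{\fg\otimes k'}_{\fq\otimes k'}(Z\otimes k')$.

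The main obstacle I anticipate is the bookkeeping in the first paragraph: making the identification $\pro^\fg_\fq Z\cong\Hom_k(U(\bar{\fu}),Z)$ \emph{as a $(\fg,M)$-module}, not merely as a $k$-module, since the $\fg$-action on the target has to be transported through the PBW splitting and one must check it is the genuine module action (this is where hypothesis (i), $[x,x]=0$, and the $M$-equivariance of $\bar{\fu}$ enter, mirroring the delicate points in \cite{KV}); once that is in place, the reduction to Lemma \ref{4.1.1} and the flat base change step are comparatively formal.
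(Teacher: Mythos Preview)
Your approach is essentially the paper's: PBW gives $\pro^\fg_\fq(Z)\cong F(U(\bar{\fu}),Z)$ as $M$-modules, then condition (v) together with Lemma \ref{4.1.1} turns the product into the direct sum $\oplus_\cO\Hom_k(U(\bar{\fu})_\cO,Z)$. The paper's proof is exactly these two lines and does not spell out the base-change consequence at all, so your elaboration there is additional.

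One point to tighten. You repeatedly write $\Hom_k(U(\bar{\fu}),Z)$ and call it an $M$-module, invoking Proposition \ref{2.2.2}. But $U(\bar{\fu})$ is not finitely generated over $k$ and does not satisfy Condition \ref{1.1.3}, so Proposition \ref{2.2.2} does not apply to it; the full $k$-linear $\Hom$ need not carry a comodule structure. What you really need (and what the paper uses) is the internal Hom $F(U(\bar{\fu}),Z)$ in $M\cmod$, which in general is only the largest $M$-subcomodule of $\Hom_k(U(\bar{\fu}),Z)$. The argument then runs cleanly: $F(-,Z)$ sends the colimit $\oplus_\cO U(\bar{\fu})_\cO$ to a limit in $M\cmod$, each $F(U(\bar{\fu})_\cO,Z)$ \emph{is} $\Hom_k(U(\bar{\fu})_\cO,Z)$ because $U(\bar{\fu})_\cO$ is finitely presented, and Lemma \ref{4.1.1} says the direct sum already realizes that product in $M\cmod$. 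With this correction your first paragraph matches the paper. For the base-change step, note that each $U(\bar{\fu})_\cO$ is a direct summand of the free module $U(\bar{\fu})$, hence finitely generated projective; this is what makes $\Hom_k(U(\bar{\fu})_\cO,Z)\otimes k'\cong\Hom_{k'}(U(\bar{\fu})_\cO\otimes k',Z\otimes k')$ hold in case (b) without flatness, which your sketch leaves a bit vague.
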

\begin{rem}\label{4.1.4}
The functor $\pro^\fg_\fq$ can be regarded as a right adjoint functor to the forgetful functor from the category of weak $(\fg,M)$-modules to that of weak $(\fq,M)$-modules. Therefore the base change functor along arbitrary ring homomorphisms makes sense (Remark \ref{3.1.3}).
\end{rem}
\begin{proof}[Proof of Proposition \ref{4.1.3}]
According to the PBW theorem (\cite{MR979493}), we have an isomorphism of $M$-modules
\[\pro^\fg_\fq(Z)\cong F(U(\bar{\fu}),Z).\]
The condition (v) and Lemma \ref{4.1.1} imply that $F(U(\bar{\fu}),Z)\cong\oplus_\cO\Hom(U(\bar{\fu})_\cO,Z)$. This completes the proof.
\end{proof}
\subsection{Examples}
Suppose that we are given a reductive pair $(\fg_\bC,K_\bC)$ over $\bC$ and a $\theta$-stable parabolic subpair $(\fq_\bC,(K_L)_\bC)$ in the sense of \cite{MR1330919}, where $\theta$ is the Cartan involution. Let $\fl_\bC$ (resp.\ $\fu_\bC$, $\bar{\fu}_\bC$) denote the Levi part (resp.\ nilradical, the opposite nilradical) of $\fq$, let $\Delta(\bar{\fu}_\bC)=\{\alpha_1,\cdots,\alpha_s\}$ be the set of roots in $\bar{\fu}_\bC$, and $h=h_{\rho(\fu_\bC)}$ be the element of the Cartan subalgebra as in \cite{MR1330919} Proposition 4.70. In particular, we have $\alpha_i(h)<0$ for $\alpha_i\in\Delta(\bar{\fu}_\bC)$.
\begin{ex}\label{4.2.1}
Observe that $(\fl_\bC,(K_L)_\bC)\subset(\fq_\bC,(K_L)_\bC)$ are $\theta$-stable subpairs of $(\fg_\bC,K_\bC)$, where $\fl_\bC$ is the Levi part of $\fq_\bC$. Note that $\fu_\bC$ is also $\theta$-stable. Therefore they associate maps of contraction families over the polynomial ring $\bC\left[z\right]$
\[(\tilde{\fl}_\bC,(K_L)_\bC\otimes\bC\left[z\right])\gets(\tilde{\fq}_\bC,(K_L)_\bC\otimes\bC\left[z\right])\to(\tilde{\fg}_\bC,K_\bC\otimes\bC\left[z\right])\]
in the sense of \cite{10.1093/imrn/rny146} and \cite{10.1093/imrn/rny147}. Define the cohomological induction as
\[\bR I^{\tilde{\fg}_\bC,K_\bC\otimes\bC\left[z\right]}_{\tilde{\fq}_\bC,(K_L)_\bC\otimes\bC\left[z\right]}\cF^{\tilde{\fq}_\bC,(K_L)_\bC\otimes\bC\left[z\right]}
_{\tilde{\fl}_\bC,(K_L)_\bC\otimes\bC\left[z\right]}(-\otimes_{\bC\left[z\right]}\wedge^{\dim\fu}\tilde{\fu}),\]
where $\cF^{\tilde{\fq}_\bC,(K_L)_\bC\otimes\bC\left[z\right]}
_{\tilde{\fl}_\bC,(K_L)_\bC\otimes\bC\left[z\right]}$ is the forgetful functor
\[(\tilde{\fl}_\bC,(K_L)_\bC\otimes\bC\left[z\right])\cmod\to(\tilde{\fq}_\bC,(K_L)_\bC\otimes\bC\left[z\right])\cmod.\]
Remark that $\pro^{\tilde{\fg}_\bC}_{\tilde{\fq}_\bC}$ is exact (\cite{1712.07336} Variant 2.6, Corollary 2.12). Let $Z$ be a torsion-free $(\tilde{\fl}_\bC,(K_L)_\bC\otimes\bC\left[z\right])$-module with a scalar action of $h$. If $Z\otimes\bC(z)$ is admissible, the cohomological induction enjoys a flat base change formula to the algebraic closure $\overline{\bC(z)}$ of the field of rational functions $\bC(z)$
\begin{flalign*}
&\bR I^{\tilde{\fg}_\bC,K_\bC\otimes\bC\left[z\right]}_{\tilde{\fq}_\bC,(K_L)_\bC\otimes\bC\left[z\right]}\cF^{\tilde{\fq}_\bC,(K_L)_\bC\otimes\bC\left[z\right]}
_{\tilde{\fl}_\bC,(K_L)_\bC\otimes\bC\left[z\right]}(-\otimes_{\bC\left[z\right]}\wedge^{\dim\fu}\tilde{\fu})\otimes\overline{\bC(z)}\\
&\cong\bR I^{\fg\otimes\overline{\bC(z)},K_\bC\otimes\overline{\bC(z)}}_{\fq_\bC\otimes\overline{\bC(z)},
(K_L)_\bC\otimes\overline{\bC(z)}}\cF^{\fq_\bC\otimes\overline{\bC(z)},
(K_L)_\bC\otimes\overline{\bC(z)}}
_{\fl_\bC\otimes\overline{\bC(z)},(K_L)_\bC\otimes\overline{\bC(z)}}
Z\otimes\wedge^{\dim\fu}\fu\otimes\overline{\bC(z)})
\end{flalign*}
(use \cite{MR1330919} Proposition 5.96). Suppose also that the $\tau$-type $Z_\tau\subset Z$ for each irreducible representation $\tau$ of $(K_L)_\bC$ is free of finite rank over $\bC\left[z\right]$. Then for any $\bC$-algebra homomorphism $\bC\left[z\right]\to\bC$, we have a base change formula
\[\pro^{\tilde{\fg}_\bC}_{\tilde{\fq}_\bC}(Z)\otimes_{\bC\left[z\right]}\bC
\cong
\pro^{\tilde{\fg}_\bC\otimes_{\bC\left[z\right]}\bC}_{\tilde{\fq}_\bC\otimes_{\bC\left[z\right]}\bC}
(Z\otimes_{\bC\left[z\right]}\bC).\]
\end{ex}
Let $k$ be a Noetherian subring of $\bC$, and $(\fq,K_L)\subset(\fg,K)$ be a $k$-form of $(\fq_\bC,(K_L)_\bC)\subset(\fg_\bC,K_\bC)$ in the sense that $(\fq_\bC,(K_L)_\bC)\subset(\fg_\bC,K_\bC)$ is isomorphic to the base change of $(\fq,K_L)\subset(\fg,K)$. Assume that there is a complementary $K_L$-stable subalgebra $\bar{\fu}\subset\fg$ to $\fq$ which is a $k$-form of $\bar{\fu}_\bC$. Moreover, suppose that the following conditions are satisfied:
\begin{enumerate}
\renewcommand{\labelenumi}{(\roman{enumi})}
\item There is a free basis of $\fq$.
\item There is a free basis $\{E_{\alpha_i}\}$ of $\bar{\fu}$ consisting of root vectors of $\bar{\fu}_\bC$.
\item The $(K_L)_\bC$-orbit of $h$ is contained in the Cartan subalgebra.
\end{enumerate}
\begin{prop}\label{4.2.2}
In this setting, there is a family $\{U(\bar{\fu})_\cO\}$ of finitely generated $K_L$-submodules of $U(\bar{\fu})$ such that
\[U(\bar{\fu})=\oplus_\cO U(\bar{\fu})_\cO.\]
\end{prop}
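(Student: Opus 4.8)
\emph{Proof strategy.} The plan is to decompose $U(\bar\fu)$ into the eigenspaces of $\operatorname{ad}(h)$, verify that these pieces are $K_L$-stable by passing to $\bC$, and then descend the decomposition to $k$ by Lemma~\ref{2.1.16}. First I would observe that $U(\bar\fu)$ is a $K_L$-module algebra, the action being induced from the $K_L$-equivariant structure on the subalgebra $\bar\fu\subset\fg$. Since $\{E_{\alpha_1},\dots,E_{\alpha_s}\}$ is a $k$-free basis of $\bar\fu$ consisting of root vectors, the PBW theorem over $k$ gives that the monomials $E_{\alpha_1}^{m_1}\cdots E_{\alpha_s}^{m_s}$ form a $k$-free basis of $U(\bar\fu)$. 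To such a monomial I attach the real number $c=\sum_i m_i\,\alpha_i(h)$ and let $U(\bar\fu)_c\subset U(\bar\fu)$ be the $k$-span of the monomials with this value of $c$; since $\alpha_i(h)<0$ for all $i$, for fixed $c$ there are only finitely many $(m_i)\in\bN^{\,s}$ with $\sum_i m_i\,\alpha_i(h)=c$, so each $U(\bar\fu)_c$ is $k$-free of finite rank and $U(\bar\fu)=\bigoplus_c U(\bar\fu)_c$ as $k$-modules.

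Next I would identify this, after applying $-\otimes\bC$, with the eigenspace decomposition of the derivation $\operatorname{ad}(h)$ of $U(\bar\fu_\bC)$: since $\operatorname{ad}(h)E_{\alpha_i}=\alpha_i(h)E_{\alpha_i}$, the space $U(\bar\fu)_c\otimes\bC$ is exactly the $c$-eigenspace $U(\bar\fu_\bC)[c]$, and $U(\bar\fu_\bC)=\bigoplus_c U(\bar\fu_\bC)[c]$. Granting that each $U(\bar\fu_\bC)[c]$ is a $(K_L)_\bC$-submodule, the proof concludes formally: $U(\bar\fu)$ and each $U(\bar\fu)_c$ are $k$-flat (free), $U(\bar\fu)=\bigoplus_c U(\bar\fu)_c$ as $k$-modules, and this becomes a direct sum of subcomodules of $U(\bar\fu)\otimes\bC$ for the (flat) coordinate ring of $K_L$ after base change along the injection $k\hookrightarrow\bC$; Lemma~\ref{2.1.16} then upgrades it to a decomposition of $K_L$-modules, and the $U(\bar\fu)_c$ with $c$ occurring constitute the required family $\{U(\bar\fu)_\cO\}$ of finitely generated (indeed $k$-free of finite rank) $K_L$-submodules.

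What is left, and what I expect to be the one delicate point, is the $(K_L)_\bC$-invariance of the eigenspaces $U(\bar\fu_\bC)[c]$. For $g\in(K_L)_\bC$ one has $\Ad(g)\operatorname{ad}(h)\Ad(g)^{-1}=\operatorname{ad}(\Ad(g)h)$ on $\fg_\bC$, hence on $U(\bar\fu_\bC)$, so it suffices to see that $\Ad(g)h$ induces the same grading of $\bar\fu_\bC$ as $h$, and for this it suffices that $\Ad(g)h=h$. When $(K_L)_\bC$ is connected this is immediate because $h$ is central in $\fl_\bC$ and $(K_L)_\bC$ normalizes $\fl_\bC$. For the component group I would invoke hypothesis (iii): it forces $\Ad(g)h$ into the Cartan subalgebra, and one then checks it coincides with $h$ --- for instance because $h=h_{\rho(\fu_\bC)}$ is pinned down inside $\fl_\bC$ by the condition $B(h,Y)=\tfrac12\operatorname{tr}\!\bigl(\operatorname{ad}(Y)|_{\fu_\bC}\bigr)$ for $Y\in\fl_\bC$, with $B$ a $K_L$-invariant form that is nondegenerate on $\fl_\bC$, and this functional is unchanged under conjugation by $\Ad(g)$ since $(K_L)_\bC$ normalizes $\fu_\bC$. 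Everything else is PBW bookkeeping together with the descent Lemma~\ref{2.1.16}.
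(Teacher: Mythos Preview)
Your argument is essentially correct but takes a genuinely different route from the paper. You index the decomposition by single eigenvalues $c$ of $\operatorname{ad}(h)$ and then argue that these eigenspaces are already $(K_L)_\bC$-stable by proving $\Ad(g)h=h$ for every $g\in(K_L)_\bC$. The paper does not attempt to prove this fixed-point statement. Instead it allows the component group $G=\pi_0((K_L)_\bC)$ to move $h$: it sets $h_x=\Ad(g_x)h$ for each $x\in G$ (these lie in the Cartan by hypothesis (iii)), records the simultaneous $\operatorname{ad}(h_x)$-eigenvalues as a vector $\vec r\in\bR^G$, and defines $U(\bar\fu)_\cO$ as the sum of the $\vec r$-pieces over a $G$-orbit $\cO\subset\bR^G$. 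A direct bracket computation then shows that $\Ad(g)$, for $g$ in the component $x$, carries $U(\bar\fu)_{\vec r}$ into $U(\bar\fu)_{x^{-1}\cdot\vec r}$, so the orbit sums are $(K_L)_\bC$-stable.

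Your route is shorter and, granted $\Ad(g)h=h$, actually gives the finer decomposition (all $h_x$ then coincide, so the paper's $G$-orbits collapse to singletons). The cost is that your proof of $\Ad(g)h=h$ invokes structure not contained in the stated hypotheses (i)--(iii): a $(K_L)_\bC$-invariant form on $\fg_\bC$ that is nondegenerate on $\fl_\bC$, and the fact that $(K_L)_\bC$ normalizes $\fl_\bC$ (so that $\Ad(g)h$ lands in $\fl_\bC$ and the nondegeneracy argument applies). These are available in the Knapp--Vogan setting the paper cites, so your argument is sound there; the paper's approach, by contrast, uses hypothesis (iii) only to make sense of the numbers $\alpha_i(h_x)$ and never asks whether the component group fixes $h$. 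Both proofs share the PBW bookkeeping, the finiteness from $\alpha_i(h)<0$, and the descent to $k$ via Lemma~\ref{2.1.16}.
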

\begin{cons}\label{4.2.3}
Let $G$ be the component group $\pi_0((K_L)_\bC)$ of $(K_L)_\bC$. For each $x\in G$, fix a representative $g_x\in (K_L)_\bC$ and set $h_x=\Ad(g_x)h$, where $\Ad$ is the action of $(K_L)_\bC$ on $\fg_\bC$. Since the unit component $(K_L)^0_\bC$ centralizes $h$, it is independent of the choice of $g_x$. In particular, if $g=e$ is the unit then $h_e=h$. Observe next that $G$ acts on the complex vector space $\bC^G$ by translation of entries. For a $G$-orbit $\cO$ in $\bC^G$, define $U(\bar{\fu})_\cO$ as
\[U(\bar{\fu})_\cO=\oplus_{\vec{r}\in\cO}\oplus_{\stackrel{\sum n_i\alpha_i(h_x)=r_x}{{\rm for\ any\ }x\in G}}kE^{n_1}_{\alpha_1}E^{n_2}_{\alpha_2}\cdots E^{n_s}_{\alpha_s}.\]
We also set
\[U(\bar{\fu})_{\vec{r}}=\oplus_{\stackrel{\sum n_i\alpha_i(h_x)=r_x}{{\rm for\ any\ }x\in G}}kE^{n_1}_{\alpha_1}E^{n_2}_{\alpha_2}\cdots E^{n_s}_{\alpha_s}.\]
\end{cons}
\begin{proof}[Proof of Proposition \ref{4.2.2}]
The $k$-modules $U(\bar{\fu})_\cO$ are finitely generated by definition. According to the PBW theorem, we have $U(\bar{\fu})\cong \oplus_\cO U(\bar{\fu})_\cO$ as a $k$-module. To see that $U(\bar{\fu})_\cO$ is a $K_L$-submodule, we may assume $k=\bC$ (Lemma \ref{2.1.14}). 

Fix $\vec{r}\in\cO$ and exponents $\{n_i\}$ with $\sum n_i\alpha_i(h_x)=r_x$. Let $g\in(K_L)_\bC$ in a component $x\in G$, and write
\[\Ad(g)E^{n_1}_{\alpha_1}E^{n_2}_{\alpha_2}\cdots E^{n_s}_{\alpha_s}
=\sum_{\vec{r}'\in\bC^G}v'_{\vec{r}'}\]
with $v'_{\vec{r}'}\in U(\bar{\fu})_{\vec{r}'}$. Then for any $y\in G$,
\[\begin{split}
\sum_{\vec{r}'\in\bC^G} r'_y v_{\vec{r}'}
&=\sum\left[h_y,v_{\vec{r}'}\right]\\
&=\left[h_y,\Ad(g)E^{n_1}_{\alpha_1}E^{n_2}_{\alpha_2}\cdots E^{n_s}_{\alpha_s}\right]\\
&=\Ad(g)\left[\Ad(g)^{-1}h_y,E^{n_1}_{\alpha_1}E^{n_2}_{\alpha_2}\cdots E^{n_s}_{\alpha_s}\right]\\
&=\Ad(g)\left[h_{x^{-1}y},E^{n_1}_{\alpha_1}E^{n_2}_{\alpha_2}\cdots E^{n_s}_{\alpha_s}\right]\\
&=r_{x^{-1}y}\Ad(g)E^{n_1}_{\alpha_1}E^{n_2}_{\alpha_2}\cdots E^{n_s}_{\alpha_s}\\
&=r_{x^{-1}y}\sum_{\vec{r}'\in\bC^G}v_{\vec{r}'}.
\end{split}\]
Therefore $v_{\vec{r'}}$ vanishes unless $\vec{r}'=x^{-1}\cdot \vec{r}$. In particular, $\Ad(g)E^{n_1}_{\alpha_1}E^{n_2}_{\alpha_2}\cdots E^{n_s}_{\alpha_s}\in U(\bar{\fu})_{x^{-1}\vec{r}}\subset U(\bar{\fu})_\cO$. This completes the proof.
\end{proof}
\begin{ex}\label{4.2.4}
Let $0\leq p\leq q$ be nonnegative integers with $n=p+q\geq 1$. Then the diagonal embedding $\GL_p\times\GL_q\to\GL_{p+q}$ gives rise to a pair $(\fgl_{p+q},\GL_p\times\GL_q)$ over $\bZ$. Consider a partition $n=\sum_{i=1}^l p_i$ with $p_i\geq 1$ for $1\leq i\leq l$. Write $m_0=\min\{m\geq 1:p<\sum_{i=1}^m p_i\}$. Set $Q$ as the subgroup of $\GL_{p+q}$ consisting of upper triangular block matrices of size $p_1 \times p_2 \times \cdots \times p_l$. Let $K_L$ be the subgroup of $Q$ consisting of matrices whose entries are zero outside
\begin{flalign*}
&\bigcup_{m=1}^{m_0-1}\{(a,b)\in\{1,2\cdots,n\}^2:1+\sum_{i=1}^{m-1}p_i\leq a,b\leq\sum_{i=1}^m p_i\}\\
&\cup\{(a,b)\in\{1,2\cdots,n\}^2:1+\sum_{i=1}^{m_0-1}p_i\leq a,b\leq p\}\\
&\cup\{(a,b)\in\{1,2\cdots,n\}^2:p+1\leq a,b\leq\sum_{i=1}^{m_0} p_i\}\\
&\cup\bigcup_{m=m_0+1}^l\{(a,b)\in\{1,2\cdots,n\}^2:1+\sum_{i=1}^{m-1}p_i\leq a,b\leq\sum_{i=1}^m p_i\}.
\end{flalign*}
Then $\fq$ and $K_L$ form a subpair of $(\fgl_{p+q},\GL_p\times\GL_q)$. Moreover, it is an integral model of the pair associated to $U(p,q)$ and a $\theta$-stable parabolic subpair. Moreover, it enjoys the conditions above.
\end{ex}
For (v) in Proposition \ref{4.1.3}, let $Z$ be a torsion-free $(\fq,K_L)$-module. See also Proposition \ref{4.1.2} (1).
\begin{prop}[\cite{MR1330919} Proposition 5.96]\label{4.2.5}
If $Z\otimes\bC$ is admissible, and that $h$ acts on $Z\otimes\bC$ as a scalar then the $(K_L)_\bC$-module $\oplus\Hom(U(\bar{\fu})_\cO, Z\otimes\bC)$ is admissible.
\end{prop}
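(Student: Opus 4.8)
The plan is to restrict the group to its identity component $(K_L)^0_\bC$, on which $h$ induces a grading of $U(\bar{\fu})$ with finite-dimensional homogeneous pieces, and then to use that $h$ acts on $Z\otimes\bC$ by a scalar to show that, for each irreducible $(K_L)^0_\bC$-module, exactly one homogeneous piece contributes. First I would observe that admissibility over $(K_L)_\bC$ may be tested after restriction to the finite-index subgroup $(K_L)^0_\bC$: since $(K_L)_\bC$ is reductive, each irreducible $\tau$ is finite-dimensional and $\tau|_{(K_L)^0_\bC}$ is a finite semisimple module, while for a fixed irreducible $\sigma$ of $(K_L)^0_\bC$ only the (finitely many) constituents of $\Ind^{(K_L)_\bC}_{(K_L)^0_\bC}\sigma$ restrict to modules containing $\sigma$. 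In particular $Z\otimes\bC$ stays admissible over $(K_L)^0_\bC$, and it suffices to prove that $\bigoplus_\cO\Hom(U(\bar{\fu})_\cO,Z\otimes\bC)$ is admissible as a $(K_L)^0_\bC$-module.

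Next I would set up the grading. Recall from Construction \ref{4.2.3} that $(K_L)^0_\bC$ centralizes $h$; since $\fu_\bC$ is $\theta$-stable, $h=h_{\rho(\fu_\bC)}$ is moreover $\theta$-fixed and central in $\fl_\bC$ (see \cite{KV} Proposition 4.70), hence lies in the Lie algebra $\fl_\bC\cap\fk_\bC$ of $(K_L)_\bC$. For $r\in\bR$ let $U(\bar{\fu})_r$ be the span of the monomials $E^{n_1}_{\alpha_1}\cdots E^{n_s}_{\alpha_s}$ with $\sum n_i\alpha_i(h)=r$; equivalently, $U(\bar{\fu})_r$ is the $r$-eigenspace of $\mathrm{ad}(h)$ on $U(\bar{\fu})$. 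Because $\alpha_i(h)<0$ for all $i$ and the $n_i$ are nonnegative, the equation $\sum n_i\alpha_i(h)=r$ has only finitely many solutions, so each $U(\bar{\fu})_r$ is finite-dimensional; since $(K_L)^0_\bC$ centralizes $h$ it preserves the $\mathrm{ad}(h)$-eigenspaces, so each $U(\bar{\fu})_r$ is a $(K_L)^0_\bC$-submodule; and $U(\bar{\fu})=\bigoplus_r U(\bar{\fu})_r$ by the PBW theorem. Since $U(\bar{\fu})_\cO|_{(K_L)^0_\bC}=\bigoplus_{\vec{r}\in\cO}U(\bar{\fu})_{\vec{r}}$ by the definition in Construction \ref{4.2.3}, regrouping the monomials according to the common scalar $r=\sum n_i\alpha_i(h)$ identifies $\bigoplus_\cO\Hom(U(\bar{\fu})_\cO,Z\otimes\bC)$ with $\bigoplus_r\Hom(U(\bar{\fu})_r,Z\otimes\bC)$ as $(K_L)^0_\bC$-modules.

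To finish, fix an irreducible $(K_L)^0_\bC$-module $\sigma$. Because $(K_L)^0_\bC$ is connected and $h$ is central in $\fl_\bC$, the action of $h$ on $\sigma$ is a $(K_L)^0_\bC$-endomorphism, hence a scalar $\chi_\sigma(h)$. As $U(\bar{\fu})_r$ is finite-dimensional over $\bC$, the $(K_L)_\bC$-module $\Hom(U(\bar{\fu})_r,Z\otimes\bC)$ is naturally isomorphic to $\Hom(U(\bar{\fu})_r,\bC)\otimes(Z\otimes\bC)$ (cf.\ Proposition \ref{2.2.2}), and on it $h$ acts by the single scalar $c-r$, where $c$ is the scalar by which $h$ acts on $Z\otimes\bC$ and $r$ is the scalar by which $\mathrm{ad}(h)$ acts on $U(\bar{\fu})_r$. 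Comparing the action of $h$ on $\sigma$ with its action on this target, we see that $\Hom_{(K_L)^0_\bC}(\sigma,\Hom(U(\bar{\fu})_r,Z\otimes\bC))$ vanishes unless $c-r=\chi_\sigma(h)$, i.e.\ unless $r=c-\chi_\sigma(h)$, and for that one value of $r$ it equals $\Hom_{(K_L)^0_\bC}(\sigma,\Hom(U(\bar{\fu})_r,\bC)\otimes(Z\otimes\bC))$, which is finite-dimensional since $\Hom(U(\bar{\fu})_r,\bC)$ is finite-dimensional and $Z\otimes\bC$ is admissible over $(K_L)^0_\bC$. Hence $\Hom_{(K_L)^0_\bC}\!\big(\sigma,\bigoplus_\cO\Hom(U(\bar{\fu})_\cO,Z\otimes\bC)\big)$ is finite-dimensional, which is the required admissibility.

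The decisive point — and the only place where genuine input is used — is the eigenvalue bookkeeping of the last paragraph: one must check that $h$ acts on $\Hom(U(\bar{\fu})_r,Z\otimes\bC)$ exactly by $c-r$, and that the hypothesis ``$h$ acts on $Z\otimes\bC$ as a scalar'' together with $h$ lying in the Lie algebra of $(K_L)_\bC$ (which uses $\theta$-stability) makes this scalar comparable to $\chi_\sigma(h)$. The remaining ingredients — finite-dimensionality of each $U(\bar{\fu})_r$ from the negativity $\alpha_i(h)<0$, and semisimplicity over the reductive group $(K_L)_\bC$ — are routine.
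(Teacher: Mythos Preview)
The paper does not supply its own proof of this proposition; it simply records the statement and attributes it to \cite{KV} Proposition~5.96. Your argument is correct and is essentially the one given in \cite{KV}: reduce to the identity component $(K_L)^0_\bC$, use that $\ad(h)$ grades $U(\bar{\fu})$ into finite-dimensional $(K_L)^0_\bC$-stable pieces $U(\bar{\fu})_r$ (finite-dimensionality coming from $\alpha_i(h)<0$), and then match the scalar $\chi_\sigma(h)$ on an irreducible $(K_L)^0_\bC$-type $\sigma$ against the scalar $c-r$ by which $h$ acts on $\Hom(U(\bar{\fu})_r,Z\otimes\bC)$, so that only the single value $r=c-\chi_\sigma(h)$ contributes. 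The reduction step and the identification $\bigoplus_\cO\Hom(U(\bar{\fu})_\cO,Z\otimes\bC)\cong\bigoplus_r\Hom(U(\bar{\fu})_r,Z\otimes\bC)$ over $(K_L)^0_\bC$ are handled correctly, and your verification that $h\in\fl_\bC\cap\fk_\bC$ (via $\theta$-stability of $\fu_\bC$) is the point that makes the comparison of scalars legitimate.
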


\end{document}